\DeclareMathOperator*{\esssup}{ess\,sup}
\numberwithin{equation}{section}
\newtheorem{proposition}{Proposition}[section]
\newtheorem{theorem}[proposition]{Theorem}
\newtheorem{lemma}[proposition]{Lemma}
\newtheorem{corollary}[proposition]{Corollary}
\theoremstyle{definition}
\newtheorem{definition}[proposition]{Definition}
\newtheorem{remark}[proposition]{Remark}
\newtheorem{notation}[proposition]{Notation}
\newtheorem{example}[proposition]{Example}
\title{Hodge-Dirac, Hodge-Laplacian and Hodge-Stokes operators\\
in $L^p$ spaces on Lipschitz domains\,
\thanks{MSC 2010: 35J46, 42B37, 47F05}}
\author{Alan McIntosh\,
\thanks{Mathematical Sciences Institute,
Australian National University, Canberra, ACT 2601, Australia
- {\tt email: alan.mcintosh@anu.edu.au}}
\and Sylvie Monniaux\,
\thanks{Aix-Marseille Universit\'e, CNRS, Centrale Marseille, I2M,  
Marseille, France - {\tt email: sylvie.monniaux@univ-amu.fr}}
}
\date{}
\begin{document}

\maketitle

\begin{abstract}  
This paper concerns Hodge-Dirac operators $D_{{}^\Vert}=d+\underline{\delta}$ 
acting in $L^p(\Omega, \Lambda)$ where $\Omega$ is a bounded open subset 
of ${\mathbb{R}}^n$ satisfying some kind of Lipschitz condition, $\Lambda$ is 
the exterior algebra of ${\mathbb{R}}^n$, $d$ is the exterior derivative acting on 
the de Rham complex of differential forms on $\Omega$, and $\underline{\delta}$ 
is the interior derivative with tangential boundary conditions. In 
$L^2(\Omega,\Lambda)$, $\underline{\delta} = {d}^*$ and $D_{{}^\Vert}$ is 
self-adjoint, thus having bounded resolvents 
$\bigl\{({\rm I}+itD_{{}^\Vert})^{-1}\bigr\}_{t\in{\mathbb{R}}}$ as well as a bounded 
functional calculus in $L^2(\Omega,\Lambda)$. We investigate the range of 
values $p_H<p<p^H$ about $p=2$ for which $D_{{}^\Vert}$ has bounded 
resolvents and a bounded holomorphic functional calculus in 
$L^p(\Omega,\Lambda)$. On domains which we call very weakly Lipschitz, 
we show that this is the same range of values as for which 
$L^p(\Omega,\Lambda)$ has a Hodge (or Helmholz) decomposition, being an 
open interval that includes 2.

The Hodge-Laplacian $\Delta_{{{}^\Vert}}$ is the square of the Hodge-Dirac 
operator,  i.e. $-\Delta_{{}^\Vert}={D_{{}^\Vert}}^2$, so it also has a bounded 
functional calculus in $L^p(\Omega,\Lambda)$ when $p_H<p<p^H$. 
But the Stokes operator with Hodge boundary conditions, which is the restriction 
of $-\Delta_{{}^\Vert}$ to the subspace of divergence free vector fields in 
$L^p(\Omega,\Lambda^1)$ with tangential boundary conditions, has a bounded 
holomorphic functional calculus for further values of $p$, namely for 
$\max\{1,{p_H}_S\}<p<p^H$ where ${p_H}_S$ is the Sobolev exponent 
below $p_H$, given by $1/{{p_H}_S} =1/{p_H}+1/n$, so that 
${{p_H}_S}<2n/(n+2)$. In 3 dimensions, ${p_H}_S<6/5$. 

We show also that for bounded strongly Lipschitz domains $\Omega$, 
$p_H<2n/(n+1)<2n/(n-1)<p^H$, in agreement with the known results that 
$p_H < 4/3<4<p^H$ in dimension 2, and $p_H<3/2< 3<p^H$ in dimension 3. 
In both dimensions 2 and 3, ${p_H}_S<1$, implying that the Stokes operator 
has a bounded functional calculus in $L^p(\Omega,\Lambda^1)$ when 
$\Omega$ is strongly Lipschitz and $1<p<p^H$.
\end{abstract}


\section{Introduction}
\label{sec:intro}

In this paper, we take a first order approach to developing an $L^p$ theory for the 
Hodge-Laplacian and the Stokes operator with Hodge boundary conditions, acting 
on a bounded open subset $\Omega$ of ${\mathbb{R}}^n$. In particular, we give 
conditions on $\Omega$ and $p$ under which these operators have bounded 
resolvents, generate analytic semigroups, have bounded Riesz transforms, or 
have bounded holomorphic functional calculi. The first order approach of initially 
investigating the Hodge-Dirac operator, provides a framework for strengthening 
known results and obtaining new ones on general classes of domains, in what we 
believe is a straightforward manner. 

In particular we consider the usual strongly Lipschitz and weakly Lipschitz domains 
(see Section \ref{subsec:weakLip}), but mostly we only need the still weaker 
concept of a {\tt very weakly Lipschitz domain} $\Omega$, by which we mean 
that $\Omega=\bigcup_{j=1}^M\Omega_j$ where each $\Omega_j$ is a 
bilipschitz transformation of the unit ball, and 
$1\!{\rm l}_\Omega=\sum_{j=1}^M \chi_j$ for some Lipschitz functions 
$\chi_j:\Omega\to[0,1]$ with ${\rm sppt}_\Omega\chi_j\subset \Omega_j$.  

When $1<p<\infty$, we consider the {\tt exterior derivative} 
$d=\nabla\wedge$ as an unbounded operator in the space 
$L^p(\Omega,\Lambda)$ with domain 
${\rm{\sf D}}^p(d)= \{u\in L^p(\Omega,\Lambda)\,;\,d u \in L^p(\Omega,\Lambda)\}$, 
where $\Lambda=\Lambda^0\oplus\Lambda^1\oplus\dots\oplus\Lambda^n$ is the 
exterior algebra of ${\mathbb{R}}^n$ and 
$L^p(\Omega,\Lambda) = \oplus_{k=0}^n L^p(\Omega,\Lambda^k)$ is the space 
of differential forms on $\Omega$.  
We shall see that on a very weakly Lipschitz domain $\Omega$, the range 
${\rm{\sf R}}^p(d)$ of the exterior derivative is a closed subspace of the null space 
${\rm{\sf N}}^p(d)$ with finite codimension. Similar results hold for the 
{\tt interior derivative} $\delta =-\nabla\lrcorner\ $.

The duals of the operators $d$ and $\delta$ in $L^{p'}(\Omega,\Lambda)$ are 
denoted by $\underline{\delta}$ and $\underline{d}$, being restrictions of the 
operators $\delta$ and $d$ to smaller domains, namely to the completion 
of ${\mathscr{C}}_c^\infty(\Omega)$ in the graph norms. By duality, the range 
${\rm{\sf R}}^p(\underline{\delta})$ is a closed subspace of the null space 
${\rm{\sf N}}^p(\underline{\delta})$ with finite codimension, and similarly for 
$\underline{d}$.
We remark that when $\Omega$ is weakly Lipschitz, so that the unit normal 
$\nu$ is defined a.e.~on the boundary $\partial\Omega$, then $\underline{\delta}$
and $\underline{d}$ have domains ${\rm{\sf D}}^p(\underline{\delta}) 
= \{u\in{\rm{\sf D}}^p(\delta)\,;\, \nu\lrcorner\, u_{|_{\partial\Omega}} = 0\}$ and 
${\rm{\sf D}}^p(\underline{d}) = \{u\in{\rm{\sf D}}^p(d)\,;\,
\nu\wedge u_{|_{\partial\Omega}}= 0\}$ (called {\tt tangential} and {\tt normal 
boundary conditions} respectively). 

When $p=2$ and $\Omega$ is very weakly Lipschitz, then $\underline{\delta}= d^*$,
so the {\tt Hodge-Dirac operator} $D_{{}^\Vert}= d +\underline{\delta}$ is 
self-adjoint in $L^2(\Omega,\Lambda)$, and thus has bounded resolvents 
$\{({\rm I}+itD_{{}^\Vert})^{-1}\}_{t\in{\mathbb{R}}}$ as well as a bounded 
functional calculus in $L^2(\Omega,\Lambda)$.
Moreover there is a {\tt Hodge decomposition}
$$
L^2(\Omega,\Lambda) = {{\rm{\sf R}}^2(d)} \overset\perp\oplus 
{{\rm{\sf R}}^2(\underline{\delta})} \overset\perp\oplus {\rm{\sf N}}^2(D_{{}^\Vert})
$$
where the space of harmonic forms ${\rm{\sf N}}^2(D_{{}^\Vert}) 
= {\rm{\sf N}}^2(d)\cap{\rm{\sf N}}^2(\underline{\delta})$ is finite-dimensional 
(owing to the finite codimension of ${\rm{\sf R}}^2(\underline{\delta})$ in 
${\rm{\sf N}}^2(\underline{\delta})$). Similar results hold for $D_{{}^\bot} 
= \underline{d}+\delta$.  

When $\Omega$ is smooth (see, e.g., \cite{Schw95}), then each of 
these $L^2$ results has an $L^p$ analogue for all $p \in (1,\infty)$ (provided we 
drop orthogonality from the definition of the Hodge decomposition). This is known 
not to be the case on all Lipschitz domains, though typically $L^p$ results do hold 
for all $p$ sufficiently close to 2. In this paper we prove that the following results 
hold, provided that $\Omega$ is a very weakly Lipschitz domain.
\begin {itemize}
\item
There exist Hodge exponents $p_H$, $p^H = {p_H}'$  with 
$1\le p_H<2<p^H\le\infty$ such that the Hodge decomposition
\begin{equation*}
L^p(\Omega,\Lambda)={\rm{\sf R}}^p(d)\oplus
{\rm{\sf R}}^p(\underline{\delta})\oplus
({\rm{\sf N}}^p(d) \cap{\rm{\sf N}}^p(\underline{\delta}))
\end{equation*}
holds if and only if $p_H<p<p^H$.
Moreover, for $p$ in this range, $D_{{}^\Vert}=d+\underline{\delta}$ is a closed 
operator in $L^p(\Omega,\Lambda)$, and 
${\rm{\sf N}}^p(d) \cap{\rm{\sf N}}^p(\underline{\delta}) = {\rm{\sf N}}^p(D_{{}^\Vert})
= {\rm{\sf N}}^2(D_{{}^\Vert})$. (Theorem~\ref{thm:Hodge-dec-Lp})
\item 
The Hodge-Dirac operator $D_{{}^\Vert}$ is bisectorial with a bounded 
holomorphic functional calculus in $L^p(\Omega,\Lambda)$ if and only if
$p_H<p<p^H$; in particular, for each such $p$ there exists $C_p >0$ such that 
$\|({\rm I}+itD_{{}^\Vert})^{-1}u\|_p \le  C_p\|u\|_p$ for all $t\in {\mathbb{R}}$ 
(Theorem~\ref{thm:mainThm4.1} \eqref{5.1(i)} and \eqref{5.1(ii)}).
\item 
When $p_H<p<p^H$, the {\tt Hodge-Laplacian} 
$\Delta_{{}^\Vert}=-{D_{{}^\Vert}}^2=-( d\underline{\delta}+\underline{\delta}d)$ is 
sectorial with a bounded holomorphic functional calculus in 
$L^p(\Omega,\Lambda)$ and has a bounded Riesz transform in the sense that 
$\|\sqrt{-\Delta_{{}^\Vert}}\, u\|_p \approx \|D_{{}^\Vert} u\|_p$; in particular, 
$\|({\rm I}+t^2\Delta_{{}^\Vert})^{-1}u\|_p \le  {C_p}^2\|u\|_p$ for all $t>0$, and 
$\Delta_{{}^\Vert}$ generates an analytic semigroup in $L^p(\Omega,\Lambda)$
(Corollary~\ref{cor:HodgeLaplacian}). Let us mention that sectoriality 
(\cite[Theorems~6.1 and~7.1]{MM09a}) and boundedness of Riesz transforms
(\cite[Theorem~5.1]{HMM11}) have already been proved in in the case of 
bounded strongly Lipschitz domains.
\item 
If $\max\{1,{p_H}_S\}<p<p^H$ (where ${p_H}_S = np_H/(n +p_H) <2n/(n+2)$) 
then the operators $f(D_{{}^\Vert})$ in the holomorphic functional calculus of 
$D_{{}^\Vert}$, are bounded on ${\rm{\sf N}}^p(\underline{\delta})$ and on 
${\rm{\sf N}}^p(d)$ (Theorem~\ref{thm:mainThm4.1} \eqref{5.1(iii)}).
\item 
When $\max\{1,{p_H}_S\}<p<p^H$, the restriction of the  Hodge-Laplacian 
$\Delta_{{}^\Vert}$ to ${\rm{\sf N}}^p(\underline{\delta})$ is sectorial with a bounded 
holomorphic functional calculus; in particular, the estimate
$\|({\rm I}-t^2\Delta_{{}^\Vert})^{-1}u\|_p \le {C_p}^2\|u\|_p$ holds for all 
$u \in {\rm{\sf N}}^p(\underline{\delta})$ and all $t>0$, and $\Delta_{{}^\Vert}$ 
generates an analytic semigroup on ${\rm{\sf N}}^p(\underline{\delta})$. 
The corresponding results also hold on ${\rm{\sf N}}^p(d)$
(Corollary~\ref{cor:HodgeStokes}).
\item 
If $\Omega$ is strongly Lipschitz, then $p_H<2n/(n+1)<2n/(n-1)<p^H$ and 
${p_H}_S< 2n/(n+3)$, in particular $\max\{1,{p_H}_S\} =1$ in dimensions 2 and 3. 
(Theorem \ref{thm:pHlip})
\end{itemize}
The last two points are of particular relevance to the Stokes operator with 
Hodge boundary conditions, which is the restriction of $-\Delta_{{}^\Vert}$ to 
$\{u\in L^p(\Omega,\Lambda^1)\,;\, \underline{\delta}u = 0\}$. In dimension
$n=3$, the last point shows that the Stokes operator has a bounded holomorphic 
functional calculus for all $p\in(1,p^H)$ where $p^H>3$ depends
on $\Omega$. This result completes the result stated in
\cite[Theorem~7.2]{MM09a}, where only sectoriality for $p\in(p_H,p^H)$
has been proved.

A similar lower Hodge exponent arises when considering perturbed 
Hodge-Dirac operators of the form $D_{{}^\Vert,B}=d+\underline{\delta}_B
=d+B^{-1}\underline{\delta}B$, where $B,B^{-1} \in 
L^\infty(\Omega,{\mathscr{L}}(\Lambda))$ with $\text{Re} B \ge\kappa {\rm I}$, 
which we shall only do in the case of bounded strongly Lipschitz domains.  
In this case, all of the above points, except for the final one, hold with 
$\underline{\delta}$ replaced by $\underline{\delta}_B$, $D_{{}^\Vert}$ replaced 
by $D_{{}^\Vert,B}$, and $\Delta_{{}^\Vert}$ replaced by 
$\Delta_{{}^\Vert,B}= -(D_{{}^\Vert,B})^2$, though of course the Hodge exponents
depend on $B$, with  $p^H$ possibly unequal to ${p_H}'$.   
See Section~\ref{sec:pertDirac}.

Our proofs of the results announced above rely strongly on the potential 
maps defined in Section~\ref{sec:potentials}. Those maps can be of independent
interest. They are refined versions of the ones developed in \cite{MMM08} and 
\cite{CMcI10}, refined in two ways:
\begin{itemize}
\item
we can deal here with very weakly Lipschitz domains while \cite{MMM08}
and \cite{CMcI10} only treat the case of bounded strongly Lipschitz domains;
\item
we obtain true potentials, in the sense that the maps $R$, $S$, $T$ and
$Q$ defined in Section~\ref{sec:potentials} are right inverses of $d$, 
$\underline{\delta}$, $\underline{d}$ and $\delta$ on their ranges.
\end{itemize}
As a direct consequence, the families of ranges and of nulspaces of these 
operators in $L^p$, $1<p<\infty$, form complex interpolation scales
(see Corollary~\ref{cor:complex-interpolation-scale}).

\medskip

In the case of ${\mathbb{R}}^n$, results in the same spirit (extending the range of
$p$ for which a bounded holomorphic functional calculus holds outside the Hodge
range) have been recently obtained in \cite{FMcIP14} and \cite{AS14}. 
The methods used there are different, and specific to ${\mathbb{R}}^n$.

\subsection{Acknowledgements}
The authors appreciate the support of the Mathematical Sciences Institute 
at the Australian National University, Canberra (Australia), where much of the 
collaboration took place, as well as of the Institut de Math\'ematiques de Marseille, 
Universit\'e Aix-Marseille (France). Both authors were supported by the Australian 
Research Council. The second author also aknowledges the partial support by the ANR project 
``Harmonic analysis at its boundaries" ANR-12-BS01-0013.
Our understanding of this topic has benefited from discussions with Pascal Auscher,
Dorothee Frey, Pierre Portal, Andreas Ros\'en, as well as of previous works with 
Dorina Mitrea and Marius Mitrea.

\section{Setting}
\label{sec:setting}

In this section, we specify some concepts used throughout the paper. At all 
times we are considering functions and operators defined on bounded open 
subsets $\Omega$ of Euclidean space ${\mathbb{R}}^n$ with dimension $n\ge 2$.

\subsection{Notation}

\begin{notation} 
\label{not:p,p',pS,p*}
For $1\le p\le \infty$, we denote by $p'$ the H\"older conjugate exponent, i.e.,
$\frac{1}{p}+\frac{1}{p'}=1$ (with the convention that $\frac{1}{\infty}=0$), 

\noindent
by $p_S$ the lower Sobolev exponent defined by 
$\frac{1}{p_S}=\frac{1}{p}+\frac{1}{n}$,

\noindent
and by $p^*$  the exponent for which
$W^{\frac{1}{p},p}({\mathbb{R}}^n)\hookrightarrow L^{p^*}({\mathbb{R}}^n)$, 
i.e., $p^*=\frac{np}{n-1}$. 

\noindent
We denote by $p^S$ the Sobolev exponent given by
$\frac{1}{p^S}=\frac{1}{p}-\frac{1}{n}$ if $1\le p<n$, $p^S=\infty$ if $p>n$.
If $p=n$, $p^S$ is multivalued, it takes any value in $[p,\infty)$.
\end{notation}

\begin{remark}
\label{rem:qS'}
Note that if $p\in[1,n)$, then $(p^S)'\in(1,n]$ and $(p^S)'=(p')_S$. Note also that if 
$r\in(1,\infty)$, then $(r^*)'\in(1,n)$ and 
\begin{equation}
\label{eq:r*'S}
\bigl((r^*)'\bigr)^S=(r')^*.
\end{equation}
\end{remark}

\begin{notation}
\label{not:sectors}
The following sectors in the complex plane will be considered:
\begin{align*}
&S_{\mu+}^\circ:=\bigl\{z\in{\mathbb{C}}\setminus\{0\};|\arg z|<\mu\bigr\}
\quad\mbox{and}\quad S_{\mu+}:=\overline{S_{\mu+}^\circ}\quad
\mbox{if }\mu\in (0,\pi), 
\\
&S_{\mu-}^\circ:=-S_{\mu+}^\circ \quad \mbox{and}\quad
S_\mu^\circ:=S_{\mu+}^\circ\cup S_{\mu-}^\circ\quad\mbox{if }
\mu\in \bigl(0,\tfrac{\pi}{2}\bigr)
\\
&S_\mu:=\overline{S_\mu^\circ}\quad\mbox{if }
\mu\in \bigl(0,\tfrac{\pi}{2}\bigr) \quad\mbox{and}\quad
S_0:={\mathbb{R}}\times\{0\}\subset{\mathbb{C}}. 
\end{align*}
\end{notation}

\begin{notation}
\label{not:D,N,R}
The domain of an (unbounded linear) operator $A$ is denoted by ${\rm{\sf D}}(A)$,
its null space by ${\rm{\sf N}}(A)$, its range by ${\rm{\sf R}}(A)$, and its graph 
by ${\mathsf{G}}(A)$. When the operator $A$ acts in $L^p(\Omega)$, these are 
sometimes written as ${\rm{\sf D}}^p(A,\Omega)$, ${\rm{\sf N}}^p(A,\Omega)$,  
${\rm{\sf R}}^p(A,\Omega)$, and ${\mathsf{G}}^p(A,\Omega)$.
\end{notation}

\begin{notation}
\label{not:dist}
For $E,F\subset{\mathbb{R}}^n$ two Borel sets, denote by 
${\rm dist}\,(E,F)$ the distance between $E$ and $F$ defined by
${\rm dist}\,(E,F)=\inf\bigl\{|x-y|\,;\, x\in E,\, y\in F\bigr\}$.

\noindent
For a distribution $f$ defined on an open subset $\Omega$ of ${\mathbb{R}}^n$,
we denote the support of $f$ by ${\rm sppt_\Omega}f$ or sometimes just by 
${\rm sppt}\,f$.
\end{notation}

\begin{notation}
\label{not:unit_ball}
We denote by $B(x,r)$ the ball in ${\mathbb{R}}^n$ with centre 
$x\in {\mathbb{R}}^n$ and radius $r>0$, and set $B_\Omega(x,r) = 
B(x,r)\cap\Omega$, namely the ball in $\Omega$ with the same centre and 
radius. 
\end{notation}

\subsection{Various types of Lipschitz domains}
\label{subsec:weakLip}

In the following definitions and properties, we follow the paper \cite{AMcI03} 
by Axelsson (now Ros\'en) and the first author. 
By a bounded weakly Lipschitz domain we mean a bounded open set $\Omega$
separated from the exterior domain ${\mathbb{R}}^n\setminus\overline{\Omega}$
by a weakly Lipschitz interface 
$\Sigma=\partial\Omega=\partial({\mathbb{R}}^n\setminus\overline{\Omega})$, 
defined as follows.

\begin{definition}
\label{def:uniflocLip}
Let  $\Omega \subset{\mathbb{R}}^n$ be an open set.
A function $f:\Omega\to{\mathbb{R}}^p$ is said to be {\tt uniformly
locally Lipschitz} (or {\tt Lipschitz} for short) if there exists $C>0$ 
such that for all $x\in\Omega$ there exists $r_x>0$ such that $|f(y)-f(z)|\le C|y-z|$ 
for all $y,z\in B_\Omega(x,r_x)$.
\end{definition}

We remark that every such function $f$ is differentiable a.e.~with derivatives 
$\partial_j f\in L^\infty(\Omega,{\mathbb{R}}^p)$.  

\begin{example}
\label{ex:LipNOTgloballyLip}
Let 
$$
\Omega:=\bigl\{(x,y)\in{\mathbb{R}}^2\,;\, 0<x^2+y^2<1, |\arg(x,y)|<\pi\bigr\} 
$$ 
and define $f:\Omega\to{\mathbb{R}}$ by $f(x,y)=(x^2+y^2)^{\frac12}\arg(x,y)$. 
Then $f$ is a
uniformly locally Lipschitz function in the sense of Definition~\ref{def:uniflocLip}, 
but not globally Lipschitz; i.e., there is no $C>0$ such that
$|f(z)-f(w)|\le C|z-w|$ for all $z,w\in\Omega$.
\end{example}

\begin{definition}
\label{def:bilip}
Let $\Omega\subset{\mathbb{R}}^n$ and let 
$\rho:\Omega\to \rho(\Omega)\subset{\mathbb{R}}^n$. We say that 
$\rho$ is a {\tt bilipschitz map} if $\rho$ is a bijective map from $\Omega$ to 
$\rho(\Omega)$ and $\rho$ and $\rho^{-1}$ are both uniformly locally Lipschitz.
\end{definition}

\begin{definition}
\label{def:weakLip}
The interface $\Sigma$ is {\tt weakly Lipschitz} if, for all $y\in \Sigma$, there is
a neighbourhood $V_y\ni y$ and a global bilipschitz map 
$\rho_y:{\mathbb{R}}^n\to{\mathbb{R}}^n$ such that
\begin{align*}
\Omega\cap V_y&=\rho_y\bigl({\mathbb{R}}^{n-1}\times(0,+\infty)\bigr)\cap V_y,
\\
\Sigma\cap V_y&=\rho_y\bigl({\mathbb{R}}^{n-1}\times\{0\}\bigr)\cap V_y,
\\
\bigl({\mathbb{R}}^n\setminus\overline{\Omega}\bigr)\cap V_y
&=\rho_y\bigl({\mathbb{R}}^{n-1}\times(-\infty,0)\bigr)\cap V_y.
\end{align*}
\end{definition}

\noindent
A special case of a weakly Lipschitz domain is a strongly Lipschitz domain
defined as follows.

\begin{definition}
\label{def:strongLip}
A {\tt strongly Lipschitz domain} is a weakly Lipschitz domain such that for 
all $y\in \Sigma$, there is a neighbourhood $V_y\ni y$ and a global bilipschitz 
map $\rho_y:{\mathbb{R}}^n\to{\mathbb{R}}^n$ satisfying the conditions of
Definition~\ref{def:weakLip} that takes the form
$$
\rho_y(x)=E_y\bigl(x',x_n-g_y(x')\bigr), 
\quad x=(x',x_n),\ x'=(x_1,\dots,x_{n-1})
$$ 
where $g_y:{\mathbb{R}}^{n-1}\to{\mathbb{R}}$ is a 
Lipschitz function such that $g_y(0)=0$ and $E_y$ is a Euclidian transformation.
\end{definition}

Reasoning as in \cite[Proof of Theorem~1.3]{AMcI03}, we see that bounded weakly 
Lipschitz domains have the following property. 

\begin{remark}
\label{rem:weakLip}
By Definition~\ref{def:weakLip} it follows that there exist bilipschitz maps
$\rho_j:B\to\rho_j(B)=:\Omega_j\subset\Omega$ ($j=1,\dots,M$) 
(where $B=B(0,1)$ denotes the unit ball in ${\mathbb{R}}^n$) such that 
$\displaystyle{\Omega=\bigcup_{j=1}^M\Omega_j}$, and there exist Lipschitz 
functions $\chi_j:\Omega\to[0,1]$ such that ${\rm sppt}_\Omega\chi_j\subset 
\Omega_j$ and $\sum_{j=1}^M \chi_j=1$ on $\Omega$.

Furthermore, we may assume that for each $j=1,\dots,M$, $\rho_j$ extends 
to a bilipschitz map between slightly larger open sets. 
\end{remark} 

\begin{example}
\label{ex:VWLdomain}
An important example of a weakly Lipschitz domain that is not
strongly Lipschitz is the ``two brick" domain in ${\mathbb{R}}^3$ 
defined as the interior of
\begin{align*}
&\bigl\{(x,y,z)\in{\mathbb{R}}^3;0\le z\le1, -2\le y\le2,-1\le x\le1\bigr\}\\
\cup&
\bigl\{(x,y,z)\in{\mathbb{R}}^3;-1\le z\le0, -1\le y\le1,-2\le x\le2\bigr\}.
\end{align*}
See, e.g., \cite[Example~1.5.6]{AA02}.  
\end{example}

A bounded strongly Lipschitz domain is bilipschitz equivalent to a smooth domain
in the following sense. The proof of this fact is given in the 
Appendix~\ref{sec:appendix}. 

\begin{proposition}
\label{prop:lip-sim-smooth}
Let $\Omega\subset{\mathbb{R}}^n$ be a bounded strongly Lipschitz domain. 
Then there exists a bilipschitz map $\phi:{\mathbb{R}}^n\to{\mathbb{R}}^n$ 
where $\phi^{-1}(\Omega)=\Omega'$ is a smooth domain in ${\mathbb{R}}^n$
satisfying $\phi({\mathbb{R}}^n\setminus\overline{\Omega'})
={\mathbb{R}}^n\setminus\overline{\Omega}$ and $\phi(\partial\Omega')
=\phi(\partial\Omega)$.
\end{proposition}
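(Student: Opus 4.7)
The plan is to smooth out the Lipschitz singularities of $\partial\Omega$ one coordinate chart at a time, using the local graph representations granted by the strongly Lipschitz assumption, and then to compose the resulting local bilipschitz maps into a single global bilipschitz map of ${\mathbb{R}}^n$.

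By Definition~\ref{def:strongLip} and compactness of $\overline{\Omega}$ (cf.\ Remark~\ref{rem:weakLip}), there is a finite cover $\{V_j\}_{j=1}^M$ of $\partial\Omega$ together with Euclidean transformations $E_j$ and globally Lipschitz functions $g_j:{\mathbb{R}}^{n-1}\to{\mathbb{R}}$ satisfying $E_j^{-1}(\Omega\cap V_j)=\{x_n>g_j(x')\}\cap E_j^{-1}(V_j)$. Shrink to precompact $V_j'\Subset V_j$ which still cover $\partial\Omega$, pick cutoffs $\eta_j\in{\mathscr{C}}_c^\infty(V_j)$ with $\eta_j\equiv1$ on $V_j'$, and form mollifications $\tilde g_{j,\varepsilon}=g_j*\chi_\varepsilon$ which are smooth with $\|\tilde g_{j,\varepsilon}-g_j\|_\infty=O(\varepsilon)$ and $\|\nabla\tilde g_{j,\varepsilon}\|_\infty\le\|\nabla g_j\|_\infty$. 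In the $E_j$-coordinates, set
\[
\psi_{j,\varepsilon}(x',x_n)=\bigl(x',\,x_n+\eta_j(x)(g_j(x')-\tilde g_{j,\varepsilon}(x'))\bigr),
\]
extended by the identity off $V_j$. A direct Jacobian computation shows that for $\varepsilon$ small enough (relative to $\|\nabla\eta_j\|_\infty$ and $\|\nabla g_j\|_\infty$), $\psi_{j,\varepsilon}$ is a bilipschitz self-map of ${\mathbb{R}}^n$ which on $V_j'$ carries the smooth graph $\{x_n=\tilde g_{j,\varepsilon}(x')\}$ onto the Lipschitz graph $\{x_n=g_j(x')\}$, preserving the super-graph side. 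Setting $\Omega^{(0)}:=\Omega$, $\Omega^{(j)}:=\psi_{j,\varepsilon}^{-1}(\Omega^{(j-1)})$, and $\phi:=\psi_{1,\varepsilon}\circ\cdots\circ\psi_{M,\varepsilon}$ produces a bilipschitz self-homeomorphism of ${\mathbb{R}}^n$, equal to the identity outside a neighbourhood of $\partial\Omega$, such that $\Omega':=\phi^{-1}(\Omega)=\Omega^{(M)}$ has smooth boundary with the correct exterior and boundary correspondences.

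\emph{Main obstacle.} The delicate point is that smoothing in chart $V_j$ may disturb smoothness already achieved on an overlap $V_j\cap V_i'$ for some $i<j$. The remedy is to modify $g_j$ prior to applying $\psi_{j,\varepsilon}$: on the projection of $\bigcup_{i<j}V_i'$ one replaces $g_j$ by the (now smooth) graph function that represents $\partial\Omega^{(j-1)}$ in the $E_j$-coordinates. This is legitimate because a bilipschitz change of graph coordinates between two strongly Lipschitz representations yields a graph with a controlled Lipschitz constant, so the modified $g_j$ remains globally Lipschitz with a quantitative bound. The mollifier then acts trivially on the overlap (the modified $g_j$ being already smooth there), and only the genuinely new Lipschitz portion is smoothed. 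An alternative, more conceptual route is to glue the local defining functions $\tau_j(x)=x_n-g_j(x')$ in $E_j$-coordinates into a global Lipschitz defining function $\tau$ via the partition of unity, extended by $\pm 1$ far from $\partial\Omega$, to mollify $\tau$ to a smooth $\tilde\tau$, to select a regular value $c\approx 0$ of $\tilde\tau$ by Sard so that $\Omega':=\{\tilde\tau>c\}$ is smooth, and to construct $\phi$ by integrating a Lipschitz vector field transverse to both $\{\tau=0\}$ and $\{\tilde\tau=c\}$; transversality holds because $\partial_{x_n}\tau_j\equiv 1$ in each chart.
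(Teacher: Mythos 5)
Your proposal follows the same per-chart mollification strategy as the paper: in local $E_j$-coordinates, push points in the $x_n$-direction by a cut-off multiple of $g_j - g_{j,\varepsilon}$, check that this is a bilipschitz self-map of ${\mathbb{R}}^n$ for $\varepsilon$ small, and compose over the charts. The maps $\psi_{j,\varepsilon}$ you write down are, up to a sign convention, identical to the maps $\alpha_j$ in the paper's Appendix, and your verification that each one is bilipschitz matches.

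The genuine value of your write-up is that you flag as the \emph{Main obstacle} precisely the step that the paper's proof leaves implicit — and, in fact, does not justify. The paper simply asserts that $\alpha_N\circ\cdots\circ\alpha_1$ carries $\Omega$ onto a smooth domain, but each $\alpha_j$ is constructed from the \emph{original} Lipschitz function $g_j$, and $\alpha_j$ is only a Lipschitz (not smooth) map off $V_j$. So on an overlap $V_i\cap V_j$ with $i<j$ the boundary has already been replaced by the smooth graph $g_{i,\varepsilon}$, and applying $\alpha_j$ there produces the graph $g_{i,\varepsilon}-g_j+g_{j,\varepsilon}$, which is again merely Lipschitz because $g_j$ has not been smoothed on that set. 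A two-chart example with $g_j(x')=|x'|/2$ and equal mollification parameters makes this failure explicit. Your first remedy — at step $j$ replace $g_j$ by the graph function of $\partial\Omega^{(j-1)}$ in $E_j$-coordinates, so that the new $g_j$ is already smooth on the processed overlaps and the mollifier only acts on the genuinely Lipschitz portion — is exactly what is needed to make the inductive composition work; you would still have to check that $\partial\Omega^{(j-1)}$ remains a Lipschitz graph over the $E_j$-hyperplane with a uniformly controlled Lipschitz constant (this follows because each $\alpha_i$ is a bounded perturbation of the identity and preserves the $x_n$-direction transversality), but the idea is sound. Your alternative route — glue local defining functions into a global Lipschitz defining function, mollify, choose a regular level by Sard, and build $\phi$ by flowing a Lipschitz vector field transverse to both level sets — sidesteps the composition difficulty entirely and is the more robust route, though you only sketch it. In short, your proposal is essentially the paper's argument with the overlap issue honestly identified and addressed; the paper's published proof, as written, does not close that gap.
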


We now take the property of weakly Lipschitz domains spelled out in 
Remark~\ref{rem:weakLip} (though without the condition that the bilipschitz maps 
extend to slightly larger sets) as our definition of very weakly Lipschitz domains, 
because this is all that is needed in proving many of our results.

\begin{definition}
\label{def:awLip}
We call  an open set
$\Omega\subset{\mathbb{R}}^n$ a {\tt very weakly Lipschitz domain} provided it satisfies the property \eqref{eq:Omega} below:
\begin{equation}
\label{eq:Omega}\tag{VWL}
\begin{array}{l}
\mbox{there exist }\Bigl(\rho_j:B\to\Omega_j\Bigr)_{j=1,\dots,M} 
\mbox{ bilipschitz maps such that }
\displaystyle{\Omega=\bigcup_{j=1}^M\Omega_j},
\\
\mbox{and for each }j=1,\dots,M, \mbox{ there exists a Lipschitz function } 
\chi_j:\Omega\to[0,1] 
\\
\mbox{such that }{\rm sppt}_\Omega\chi_j\subset \Omega_j\mbox{ and }
\displaystyle{\sum_{j=1}^M \chi_j(x)=1}\mbox{ for all }x\in\Omega.
\end{array}
\end{equation}
\end{definition}

\begin{example}
\label{ex:verywLipNOTwLip}
Let us reconsider the domain $\Omega$ of Example~\ref{ex:LipNOTgloballyLip}. 
It is not weakly Lipschitz because its boundary does not form an interface 
between $\Omega$ and ${\mathbb{R}}^n\setminus\overline\Omega$. However 
it is very weakly Lipschitz (with $M=1$ and $\chi_1=1$) as can be shown as 
follows. Set
$$
\Omega':=\bigl\{(x,y)\in{\mathbb{R}}^2\,;\,0<x^2+y^2<1, |\arg(x,y)|<
\tfrac{\pi}{2}\bigr\}
$$
and define $\phi:\Omega\to\Omega'$ by
$\phi(x,y):= \bigl(r\cos(\frac{\theta}{2}),r\sin(\frac{\theta}{2})\bigr)$ where 
$r:=(x^2+y^2)^{\frac{1}{2}}$ and $\theta=\arg(x,y)$. Now $\phi$ is a 
bilipschitz map from $\Omega$ to $\Omega'$ in the sense of 
Definition~\ref{def:bilip}, and $\Omega'$ is bilipschitz equivalent to a ball, so 
that $\Omega$ is bilipschitz equivalent to a ball. 
\end{example} 

\subsection{Differential forms} 

We consider the {\tt exterior derivative} $d:=\nabla\wedge=
\sum_{j=1}^n \partial_j e_j\wedge$ and the {\tt interior deri\-va\-tive} 
(or co-derivative) 
$\delta:=-\nabla\lrcorner\,=-\sum_{j=1}^n \partial_j e_j\lrcorner\,$ acting on 
{\tt differential forms} on a domain $\Omega\subset{\mathbb{R}}^n$, i.e. 
acting on functions from $\Omega$ to  the exterior algebra 
$\Lambda=\Lambda^0\oplus\Lambda^1\oplus\dots\oplus\Lambda^n$ of
${\mathbb{R}}^n$. 

We denote by $\bigl\{e_S\,;\,S\subset\{1,\dots,n\}\bigr\}$ the basis
for $\Lambda$. The space of $\ell$-vectors $\Lambda^\ell$ is the
span of $\bigl\{e_S\,;\,|S|=\ell\bigr\}$, where 
$$
e_S=e_{j_1}\wedge e_{j_2}\wedge\dots\wedge e_{j_\ell}\quad
\mbox{for}\quad S=\{e_{j_1},\dots,e_{j_\ell}\} \quad\text{with }\ j_1<j_2<\dots<j_{\ell}.
$$
Remark that $\Lambda^0$, the space of complex scalars, is the span of 
$e_\emptyset$ ($\emptyset$ being the empty set). We set 
$\Lambda^\ell=\{0\}$ if $\ell<0$ or $\ell>n$.

On the exterior algebra $\Lambda$, the basic operations are
\begin{enumerate}[$(i)$ ]
\item 
the exterior product $\wedge:\Lambda^k\times\Lambda^\ell\to\Lambda^{k+\ell}$,
\item
the interior product $\lrcorner\,:\Lambda^k\times\Lambda^\ell\to\Lambda^{\ell-k}$,
\item
the Hodge star operator $\star:\Lambda^\ell\to\Lambda^{n-\ell}$, 
\item
the inner product $\langle\cdot,\cdot\rangle:\Lambda^\ell\times\Lambda^\ell
\to{\mathbb{R}}$. 
\end{enumerate}
If $a\in\Lambda^1$, $u\in\Lambda^\ell$ and $v\in\Lambda^{\ell+1}$, then
$$
\langle a\wedge u,v\rangle=\langle u,a\lrcorner\,v\rangle.
$$
For more details, we refer to, e.g., \cite[Section~2]{AMcI03} and 
\cite[Section~2]{CMcI10}, noting that both these papers contain some historical 
background (and being careful that $\delta$ has the opposite sign in 
\cite{AMcI03}). In particular, we note the relation between $d$ and
$\delta$ via the Hodge star operator:
\begin{equation}
\label{eq:*d=delta}
\star\delta u=(-1)^\ell d(\star\,u) \quad\mbox{and}\quad
\star du=(-1)^{\ell-1}\delta(\star\,u) \quad\mbox{for an $\ell$-form }u.  
\end{equation}
The domains of the differential operators $d$ and $\delta$, denoted by 
${\rm{\sf D}}(d,\Omega)$ and ${\rm{\sf D}}(\delta,\Omega)$, or more
simply ${\rm{\sf D}}(d)$ and ${\rm{\sf D}}(\delta)$, are defined by
$$
{\rm{\sf D}}(d):=\bigl\{u\in L^2(\Omega,\Lambda); du\in L^2(\Omega,\Lambda)\bigr\}
\quad \mbox{and}\quad
{\rm{\sf D}}(\delta):=\bigl\{u\in L^2(\Omega,\Lambda); 
\delta u\in L^2(\Omega,\Lambda)\bigr\}.
$$
Similarly, the $L^p$ versions of these domains read
$$
{\rm{\sf D}}^p(d,\Omega):=\bigl\{u\in L^p(\Omega,\Lambda); 
du\in L^p(\Omega,\Lambda)\bigr\}
\  \mbox{ and }\  
{\rm{\sf D}}^p(\delta,\Omega):=\bigl\{u\in L^p(\Omega,\Lambda); 
\delta u\in L^p(\Omega,\Lambda)\bigr\}.
$$
The differential operators $d$ and $\delta$ satisfiy $d^2=d\circ d=0$ 
and $\delta^2=\delta\circ\delta=0$.
We will also consider the adjoints of $d$ and $\delta$ in the sense of
maximal adjoint operators in a Hilbert space: $\underline{\delta}:=d^*$
and $\underline{d}:=\delta^*$. They are defined as the closures in 
$L^2(\Omega,\Lambda)$ of the closable
operators $\bigl(d^*,{\mathscr{C}}_c^\infty(\Omega,\Lambda)\bigr)$ and
$\bigl(\delta^*,{\mathscr{C}}_c^\infty(\Omega,\Lambda)\bigr)$.
The next result was proved in \cite[Corollary~4.4]{AMcI03}.

\begin{proposition}
\label{d,delta_in_weakLip} 
In the case where $\Omega$ is a bounded weakly Lipschitz domain,
the operators $d^*=\underline{\delta}$ and $\delta^*=\underline{d}$ have 
the following representation
\begin{align*}
{\rm{\sf D}}(\underline{d},\Omega)={\rm{\sf D}}(\underline{d})
:&=\bigl\{u\in L^2(\Omega,\Lambda); d\tilde{u}
\in L^2({\mathbb{R}}^n,\Lambda)\bigr\},\quad 
\underline{d}u=(d\tilde{u})_{|_\Omega} \mbox{ for }
u\in {\rm{\sf D}}(\underline{d}),
\\
{\rm{\sf D}}(\underline{\delta},\Omega)={\rm{\sf D}}(\underline{\delta})
:&=\bigl\{u\in L^2(\Omega,\Lambda); 
\delta\tilde{u}\in L^2({\mathbb{R}}^n,\Lambda)\bigr\}, \quad 
\underline{\delta}u=(\delta \tilde{u})_{|_\Omega}
\mbox{ for }u\in {\rm{\sf D}}(\underline{\delta}).
\end{align*}
where $\tilde{u}$ denotes the zero-extension of $u$ to ${\mathbb{R}}^n$.
\end{proposition}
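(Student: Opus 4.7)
The statement comprises two inclusions, and by the Hodge-star identity \eqref{eq:*d=delta} the assertions for $\underline{d}$ and $\underline{\delta}$ are interchangeable, so I focus on $\underline{\delta}$. For the easier direction, if $u_k\in\mathscr{C}_c^\infty(\Omega,\Lambda)$ approximates $u\in{\rm{\sf D}}(\underline{\delta})$ in graph norm, the zero-extensions give $\tilde u_k\to\tilde u$ and $\delta\tilde u_k=\widetilde{\delta u_k}\to\widetilde{\underline{\delta}u}$ in $L^2({\mathbb{R}}^n,\Lambda)$, whence $\delta\tilde u=\widetilde{\underline{\delta}u}\in L^2({\mathbb{R}}^n,\Lambda)$ distributionally.

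For the reverse inclusion I must show: given $u\in L^2(\Omega,\Lambda)$ with $\delta\tilde u\in L^2({\mathbb{R}}^n,\Lambda)$, there exist $u_k\in\mathscr{C}_c^\infty(\Omega,\Lambda)$ with $u_k\to u$ and $\delta u_k\to(\delta\tilde u)_{|_\Omega}$ in $L^2(\Omega,\Lambda)$. The plan is to localize using the partition of unity from Remark~\ref{rem:weakLip}, refined so that each $\chi_j$ is compactly supported either in $\Omega$ (interior case) or in a boundary chart neighbourhood $V_{y_j}$ carrying the global bilipschitz map $\rho_{y_j}:{\mathbb{R}}^n\to{\mathbb{R}}^n$ of Definition~\ref{def:weakLip} sending ${\mathbb{R}}^{n-1}\times(0,\infty)$ onto $\Omega\cap V_{y_j}$ and ${\mathbb{R}}^{n-1}\times(-\infty,0)$ onto $({\mathbb{R}}^n\setminus\overline\Omega)\cap V_{y_j}$. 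The Leibniz identity $\delta(\chi_j\tilde u)=\chi_j\,\delta\tilde u-\nabla\chi_j\lrcorner\,\tilde u$ together with $\nabla\chi_j\in L^\infty$ preserves the hypothesis, so it suffices to approximate each $\chi_j u$ separately; interior pieces are handled by ordinary mollification.

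For a boundary piece I transport the problem to the upper half-space by pulling back under $\rho_{y_j}$: set $v:=\rho_{y_j}^*(\chi_j u)$ on ${\mathbb{R}}^{n-1}\times(0,\infty)$. The chart condition (the lower half-space maps into ${\mathbb{R}}^n\setminus\overline\Omega$, where $\widetilde{\chi_j u}=0$) forces $\tilde v=\rho_{y_j}^*(\widetilde{\chi_j u})$ on ${\mathbb{R}}^n$; invoking the a.e.\ intertwining of $\delta$ with bilipschitz pullback from \cite{AMcI03} then gives $\delta\tilde v\in L^2({\mathbb{R}}^n,\Lambda)$. Setting $v^\varepsilon(x',x_n):=\tilde v(x',x_n-\varepsilon)$ translates the support into $\{x_n\ge\varepsilon\}$, strictly inside the upper half-space, with $v^\varepsilon\to\tilde v$ and $\delta v^\varepsilon\to\delta\tilde v$ in $L^2$ by continuity of translation; a subsequent mollification at scale $\eta<\varepsilon$ yields elements of $\mathscr{C}_c^\infty({\mathbb{R}}^{n-1}\times(0,\infty),\Lambda)$ approximating $v$ in graph norm. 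Pushing these forward by $\rho_{y_j}$ produces Lipschitz compactly supported approximants to $\chi_j u$ in $\Omega$; a final mollification inside $\Omega$ upgrades them to $\mathscr{C}_c^\infty(\Omega,\Lambda)$ with the desired graph-norm convergence, since the pushed-forward forms and their $\delta$ lie in $L^\infty\cap L^2$.

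The main obstacle is the bilipschitz intertwining invoked above: the distributional $\delta$ of $\rho_{y_j}^*\tilde u$ must agree a.e.\ with $\rho_{y_j}^*(\delta\tilde u)$. This rests on the a.e.\ differentiability of bilipschitz maps together with the cofactor identities for $D\rho_{y_j}$, and is the substance of the framework developed in \cite{AMcI03}.
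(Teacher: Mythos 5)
The paper gives no proof of this proposition; it defers to \cite[Corollary~4.4]{AMcI03}, so there is no in-paper argument against which to compare. Your proof follows the expected localization--flattening--translation--mollification strategy, which is the substance of the cited reference, and the overall structure is sound: closedness gives the easy inclusion, and the reverse inclusion is the density of $\mathscr{C}_c^\infty(\Omega,\Lambda)$ in the zero-extension domain, obtained by pulling back to a half-space chart, translating strictly into the interior, mollifying, and pushing forward.

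One bookkeeping error needs fixing. Having reduced to $\underline{\delta}$ via Hodge duality, you transport under the ordinary pullback $\rho_{y_j}^*$ and invoke ``the a.e.\ intertwining of $\delta$ with bilipschitz pullback.'' But by Remark~\ref{rem:drho=rhod}, $\rho^*$ intertwines $d$, not $\delta$; the $\delta$-compatible transport is $\tilde\rho_*^{-1}={\rm Jac}(\rho)\,\rho_*^{-1}$, satisfying $\delta(\tilde\rho_*^{-1}w)=\tilde\rho_*^{-1}(\delta w)$. As written, the hypothesis $\delta\widetilde{\chi_j u}\in L^2({\mathbb{R}}^n,\Lambda)$ does not control $\delta(\rho^*\widetilde{\chi_j u})$. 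Either replace $\rho^*$ by $\tilde\rho_*^{-1}$ in the $\delta$-computation, or --- more consistent with the Hodge-star reduction you announce at the outset --- carry the argument through for $d$ and $\underline{d}$, where $\rho^*$ is the right map. With that substitution the remainder goes through as you describe: the compact support of $\chi_j$ inside $V_{y_j}$ together with $\rho_{y_j}\bigl({\mathbb{R}}^{n-1}\times(-\infty,0)\bigr)\cap V_{y_j}=\bigl({\mathbb{R}}^n\setminus\overline\Omega\bigr)\cap V_{y_j}$ forces the transported form to vanish on the open lower half-space and to have bounded support, so translation followed by mollification lands in $\mathscr{C}_c^\infty({\mathbb{R}}^{n-1}\times(0,\infty),\Lambda)$, and pushing forward plus a final interior mollification yields the desired approximants in $\mathscr{C}_c^\infty(\Omega,\Lambda)$.
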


A well-known property of the differential operator $d$ is that it commutes
with a change of variables as stated below, see, e.g., \cite[Definition~1.2.1 and
Proposition~1.2.2]{AA02}.

\begin{definition}
\label{def:changevar}  
Let $\Omega$ be an open set in ${\mathbb{R}}^n$ and
$\rho:\Omega\to\rho(\Omega)$ a bilipschitz transformation. 
Denote by $J_\rho(y)$ the Jacobian matrix of $\rho$ at a point $y\in \Omega$ 
and extend it to an isomorphism $J_\rho(y):\Lambda\to\Lambda$ such that
$$
J_\rho(y)(e_{i_1}\wedge\dots\wedge e_{i_k})=(J_\rho(y)e_{i_1})\wedge\dots
\wedge (J_\rho(y)e_{i_k}), \quad \{i_1,\dots,i_k\}\subset\{1,\dots,n\}.
$$
The {\tt pullback} of a field $u:\rho(\Omega)\to \Lambda$ is denoted by  
$\rho^*u:\Omega\to \Lambda$, the {\tt push forward} of a field 
$f:\Omega\to\Lambda$ by $\rho_*f:\rho(\Omega)\to\Lambda$ and 
$\tilde\rho_*^{-1}u:={\rm Jac}(\rho)\rho_*^{-1}u:\Omega\to\Lambda$ are 
defined by
$$
(\rho^*u)(y):=J_\rho(y)^*\bigl(u(\rho(y)\bigr)\quad\mbox{and}\quad
(\rho_*^{-1}u)(y):=J_\rho(y)^{-1}\bigl(u(\rho(y)\bigr),
\quad y\in B,
$$
and where ${\rm Jac}(\rho)(y)$ denotes the Jacobian determinant of $\rho$ at a
point $y\in \Omega$.
\end{definition}

\begin{remark}
\label{rem:bounds}
Note that for all $p\in[1,\infty]$, 
$\rho^*:L^p(\rho(\Omega),\Lambda)\to L^p(\Omega,\Lambda)$ and
$(\rho_*)^{-1}:L^p(\rho(\Omega),\Lambda)\to L^p(\Omega,\Lambda)$ are 
bounded with norms controlled by 
$\displaystyle{\esssup_{y\in \Omega}\|J_\rho(y)\|_{{\mathscr{L}}(\Lambda)}}$ and
$\displaystyle{\esssup_{y\in \Omega}\|J_\rho(y)^{-1}\|_{{\mathscr{L}}(\Lambda)}}$, 
and hence by the Lipschitz constants of $\rho$ and $\rho^{-1}$.
\end{remark} 

\begin{remark}
\label{rem:drho=rhod}
For $\rho$ as in Definition~\ref{def:changevar} and a field 
$u:\rho(\Omega)\to\Lambda$ the following commutation properties hold:
\begin{equation}
\label{eq:com}
d(\rho^*u)=\rho^*(d u)\quad\mbox{and}\quad
\delta(\tilde\rho_*^{-1}u)=\tilde\rho_*^{-1}(\delta u).
\end{equation}
In particular, if $u\in {\rm{\sf D}}(d,\rho(\Omega))$, then 
$\rho^*u\in{\rm{\sf D}}(d,B)$ and if $u\in {\rm{\sf D}}(\delta,\rho(\Omega))$, then 
$\tilde\rho_*^{-1}u\in{\rm{\sf D}}(\delta,\Omega)$.

We also have the following homomorphism properties:
$$
\begin{array}{lcl}
\rho^*(u\wedge v)=\rho^*u\wedge\rho^*v, &\quad&
\rho_*^{-1}(u\wedge v)=\rho_*^{-1}u\wedge\rho_*^{-1}v,
\\[4pt]
\rho^*(u\lrcorner\, v)=\rho_*^{-1}u\lrcorner\,\rho^*v,
&&
\rho_*^{-1}(u\lrcorner\, v)=\rho^*u\lrcorner\,\rho_*^{-1}v.
\end{array}
$$
\end{remark}

\begin{remark}
\label{product-rule}
By the product rule for the exterior derivative and the interior derivative we 
have that for all bounded Lipschitz scalar-valued  functions $\eta$, for all 
$u\in {\rm{\sf D}}^p(d,\Omega)$ and $v\in{\rm{\sf D}}^p(\underline{\delta},\Omega)$, 
then $\eta u\in {\rm{\sf D}}^p(d,\Omega)$, 
$\eta v\in{\rm{\sf D}}^p(\underline{\delta},\Omega)$ with
\begin{equation}
\label{eq:product-rule}
d(\eta u)=\eta\,du+\nabla\eta\wedge u\quad\mbox{and}\quad
\delta(\eta v)=\eta\,\delta u-\nabla\eta\lrcorner\,v .
\end{equation}
More generally, for $u$ a bounded Lipschitz $\ell$-form, for all 
$v\in {\rm{\sf D}}^p(d,\Omega)$, it holds
\begin{equation}
\label{eq:product-rule2}
d(u\wedge v)=du\wedge v+(-1)^\ell u\wedge dv,
\end{equation}
which gives also for all bounded Lipschitz scalar-valued  functions $\eta$, 
and for all $u\in {\rm{\sf D}}^p(d,\Omega)$:
\begin{equation}
\label{eq:product-rule3}
d(\nabla\eta\wedge u)=-\nabla\eta\wedge du.
\end{equation}
\end{remark} 

\subsection{Bisectoriality, sectoriality and functional calculus}
\label{subsec:bisect-fc}

\begin{definition}
\label{def:bisect}
A closed unbounded operator $A$ on a Banach space $X$ is said to be 
{\tt bisectorial} of angle $\omega\in\bigl[0,\frac{\pi}{2}\bigr)$ if the spectrum of 
$A$ is contained in the double sector $S_\omega$ and for all 
$\theta\in\bigl(\omega,\frac{\pi}{2}\bigr)$, the following resolvent estimate holds: 
$$
\sup_{z\in{\mathbb{C}}\setminus S_\theta}
\|({\rm I}+zA)^{-1}\|_{{\mathscr{L}}(X)}<\infty.
$$
\end{definition}

\begin{remark}
\label{rem:fcPsi}
Let $\mu\in\bigl(0,\frac{\pi}{2}\bigr)$.
Denote by $\Psi(S_\mu^\circ)$ the subspace of continuous functions
$f:S_\mu\to{\mathbb{C}}$ holomorphic on $S_\mu^\circ$ for which there 
exists $s>0$ such that $\displaystyle{\sup_{z\in S_\mu^\circ}\Bigl\{
\frac{|z|^s|f(z)|}{1+|z|^{2s}}\Bigr\}<\infty}$. 
Let $A$ be a bisectorial operator of angle $\omega\in[0,\mu)$
on a Banach space $X$. For all $f\in \Psi(S_\mu^\circ)$, we can define
for $\theta\in (\omega,\mu)$ 
$$
f(A)u:=\tfrac1{2\pi i}\int_{\partial S_\theta^\circ}f(z)(z{\rm I}\,-A)^{-1}u\,{\rm d}z,
$$ 
where the boundary of the double sector $\partial S_\theta^\circ$ is oriented 
counterclockwise.
Note that the integral above converges in norm thanks to the definition of
functions belonging to $\Psi(S_\mu^\circ)$ and the estimate on the 
resolvents of $A$.
\end{remark}

\begin{definition}
\label{def:functcalc}
Let $0\le\omega<\mu<\frac{\pi}{2}$.
A bisectorial operator $A$ of angle $\omega$ on a Banach space $X$ is 
said to admit a {\tt bounded $S_\mu^\circ$ holomorphic functional calculus} in $X$ if 
for $\theta\in(\omega,\mu)$ there exists a constant $K_\theta>0$ such that for all 
$f\in \Psi(S_\mu^\circ)$, we have that
$$
\|f(A)\|_{{\mathscr{L}}(X)}\le K_\theta\|f\|_{L^\infty(S_\theta)}.
$$
\end{definition}

\begin{remark} 
\label{self-adj-fun-calc}
Every self-adjoint operator $S$ in a Hilbert space $X$ is bisectorial of angle 0 
with resolvent estimate $\displaystyle{\sup_{z\in{\mathbb{C}}\setminus S_\theta}
\|({\rm I}+zS)^{-1}\|_{{\mathscr{L}}(X)}\le \frac{1}{\sin \theta}}$, 
and has a bounded holomorphic functional calculus with $K_\theta =1$.
See, e.g., \cite{McI86}.
\end{remark}

The results above can be adapted to the case of sectorial
operators suited for second order differential operators.

\begin{definition}
\label{def:sect}
A closed unbounded operator $A$ on a Banach space $X$ is said to be 
{\tt sectorial} of angle $\omega\in [0,\pi)$ if the spectrum of 
$A$ is contained in the sector $S_{\omega+}$ and for all 
$\theta\in\bigl(\omega,\pi\bigr)$, the following resolvent estimate holds: 
$$
\sup_{z\in{\mathbb{C}}\setminus S_{\theta+}}
\|({\rm I}+zA)^{-1}\|_{{\mathscr{L}}(X)}<\infty.
$$
\end{definition}

\begin{remark}
\label{rem:fcPsi-sect}
Let $\mu\in(0,\pi)$. As before,
denote by $\Psi(S_{\mu+}^\circ)$ the subspace of continuous functions
$f:S_{\mu+}\to{\mathbb{C}}$, holomorphic on $S_{\mu+}^\circ$ for which 
there exists $s>0$
such that $\displaystyle{\sup_{z\in S_{\mu+}^\circ}\Bigl\{
\frac{|z|^s|f(z)|}{1+|z|^{2s}}\Bigr\}<\infty}$. 
Let $A$ be a sectorial operator of angle $\omega\in[0,\mu)$
on a Banach space $X$. For all $f\in \Psi(S_{\mu+}^\circ)$, we can define
for $\theta\in (\omega,\mu)$ 
$$
f(A)u:=\tfrac1{2\pi i}\int_{\partial S_{\theta+}^\circ}f(z)(z{\rm I}\,-A)^{-1}u\,{\rm d}z,
$$ 
where the boundary of the sector $\partial S_{\theta+}^\circ$ is oriented 
counterclockwise.
Note that the integral above converges in norm thanks to the definition of
functions belonging to $\Psi(S_{\mu+}^\circ)$ and the estimate on the 
resolvents of $A$.
\end{remark}

\begin{definition}
\label{def:functcalcsect}
Let $0\le\omega<\mu<\frac{\pi}{2}$.
A sectorial operator $A$ of angle $\omega$ on a Banach space $X$ is 
said to admit a {\tt bounded $S_{\mu+}^\circ$ holomorphic functional calculus} 
in $X$ if for $\theta\in(\omega,\mu)$ there exists a constant $K_\theta>0$ such 
that for all $f\in \Psi(S_{\mu+}^\circ)$, we have that
$$
\|f(A)\|_{{\mathscr{L}}(X)}\le K_\theta\|f\|_{L^\infty(S_{\theta+})}.
$$
\end{definition}

\begin{definition}
\label{def:OD}
Let $\Omega\subset{\mathbb{R}}^n$ be an open set and let $q\in[1,\infty)$. 
A family of bounded operators $\bigl\{R_z, z\in Z\bigr\}$ (where 
$Z\subset{\mathbb{C}}$) on $L^q(\Omega)$ is said to admit 
(exponential) {\tt off-diagonal bounds} $L^q-L^q$ (of first order) if there exists 
$C,c>0$ such that for all $E,F\subset{\mathbb{R}}^n$ Borel sets, we have that
$$
\bigl\|1\!{\rm l}_ER_z1\!{\rm l}_Fu\bigl\|_{L^q(\Omega)}
\le C e^{-c\,\frac{{\rm dist}\,(E,F)}{|z|}}\|u\|_{L^q(\Omega)},
\quad \forall\, z\in Z,\ \forall\, u\in L^q(\Omega).
$$
\end{definition}

\begin{remark}
\label{rem:OD}
If a family of bounded operators $\bigl\{R_z, z\in Z\bigr\}$ on $L^q(\Omega)$ 
admits off-diagonal bounds $L^q-L^q$, then the family of adjoints 
$\bigl\{{R_z}^*, z\in Z\bigr\}$ admits off-diagonal bounds $L^{q'}-L^{q'}$.
\end{remark}

\section{Hodge-Dirac operators}
\label{HDops}

\begin{definition}
\label{def:HodgeDirac}
\begin{enumerate}[$(i)$ ]
\item
The {\tt Hodge-Dirac operator} on $\Omega$ with {\tt normal boundary 
con\-di\-tions} is
$$
D_{{}^\bot}:=\delta^*+\delta=\underline{d}+\delta.
$$
Note that $-{\Delta_{{}^\bot}}:={D_{{}^\bot}}^2
=\underline{d}\delta+\delta\underline{d}$ is
the Hodge-Laplacian with relative (generalised Dirichlet) boundary 
conditions.

For a scalar function $u:\Omega\to\Lambda^0$ we have that 
$-{\Delta_{{}^\bot}}u=\delta\underline{d}u=-\Delta_Du$, where $\Delta_D$ is 
the Dirichlet Laplacian.
\item
The {\tt Hodge-Dirac operator} on $\Omega$ with {\tt tangential boundary 
conditions} is
$$
D_{{}^\Vert}:=d+d^*=d+\underline{\delta}.
$$
Note that $-{\Delta_{{}^\Vert}}:={D_{{}^\Vert}}^2
=d\underline{\delta}+\underline{\delta}d$ is 
the Hodge-Laplacian with absolute (generalised Neumann) boundary 
conditions.

For a scalar function 
$u:\Omega\to\Lambda^0$ we have that 
$-{\Delta_{{}^\Vert}}u=\underline{\delta}du=-\Delta_Nu$, where 
$\Delta_N$ is the Neumann Laplacian.
\end{enumerate}
\end{definition}

Following \cite[Section 4]{AKMcI06Invent}, we have that the operators 
${D_{{}^\bot}}$ and ${D_{{}^\Vert}}$ are closed densely defined operators 
in $L^2(\Omega,\Lambda)$, and that 
\begin{align*}
L^2(\Omega,\Lambda)&=\overline{{\rm{\sf R}}(d)}\stackrel{\bot}{\oplus}
\overline{{\rm{\sf R}}(\underline{\delta})}\stackrel{\bot}
{\oplus}{\rm{\sf N}}(D_{{}^\Vert})
\\
&=\overline{{\rm{\sf R}}(\delta)}\stackrel{\bot}{\oplus}
\overline{{\rm{\sf R}}(\underline{d})} \stackrel{\bot}{\oplus}{\rm{\sf N}}(D_{{}^\bot}),
\end{align*}
where ${\rm{\sf N}}(D_{{}^\Vert})= {\rm{\sf N}}(d)\cap{\rm{\sf N}}(\underline{\delta})
={\rm{\sf N}}\bigl({\Delta_{{}^\Vert}}\bigr)$ and
${\rm{\sf N}}(D_{{}^\bot})={\rm{\sf N}}(\delta)\cap{\rm{\sf N}}(\underline{d})
={\rm{\sf N}}\bigl({\Delta_{{}^\bot}}\bigr)$

\begin{remark}
\label{rem:Hodge-dec-(H)}
If $\Omega$ satisfies \eqref{eq:Omega}, then it is essentially proved in 
\cite[proof of Theorem~1.3, $(i)$  p.\,19-20]{AMcI03}, that ${\rm{\sf R}}(d)$
and ${\rm{\sf R}}(\underline{\delta})$, as well as ${\rm{\sf R}}(\delta)$ and
${\rm{\sf R}}(\underline{d})$, are closed subspaces of $L^2(\Omega,\Lambda)$
and that ${\rm{\sf N}}(D_{{}^\Vert})={\rm{\sf N}}\bigl({\Delta_{{}^\Vert}}\bigr)$ and
${\rm{\sf N}}(D_{{}^\bot})={\rm{\sf N}}\bigl({\Delta_{{}^\bot}}\bigr)$ are finite 
dimensional. We shall include a proof of these facts in Section \ref{sec:potentials}.
\end{remark}

\begin{definition}
\label{def:projections}
The {\tt Hodge decompositions} from Remark~\ref{rem:Hodge-dec-(H)} are
accompanied with the orthogonal projections
\begin{align*}
&{\mathcal{P}}_{{\rm{\sf R}}(d)}:L^2(\Omega,\Lambda)\to \overline{{\rm{\sf R}}(d)},
\quad
{\mathcal{P}}_{{\rm{\sf R}}(\underline{\delta})}:L^2(\Omega,\Lambda)\to 
\overline{{\rm{\sf R}}(\underline{\delta})},\quad
{\mathcal{P}}_{{\rm{\sf N}}(D_{{}^\Vert})}:L^2(\Omega,\Lambda)\to
{\rm{\sf N}}(D_{{}^\Vert});
\\
&{\mathcal{P}}_{{\rm{\sf R}}(\delta)}:L^2(\Omega,\Lambda)\to 
\overline{{\rm{\sf R}}(\delta)},
\quad {\mathcal{P}}_{{\rm{\sf R}}(\underline{d})}:L^2(\Omega,\Lambda)\to 
\overline{{\rm{\sf R}}(\underline{d})},\quad
{\mathcal{P}}_{{\rm{\sf N}}(D_{{}^\bot})}:L^2(\Omega,\Lambda)\to 
{\rm{\sf N}}(D_{{}^\bot}).
\end{align*}
Moreover, noting that $d:{\rm{\sf D}}(d)\cap\overline{{\rm{\sf R}}(\underline{\delta})}
\to \overline{{\rm{\sf R}}(d)}$ is one-to-one we define 
$$
\underline{R}:L^2(\Omega,\Lambda)\to {\rm{\sf R}}(\underline{\delta}),\quad
\begin{cases}
d\underline{R} u=u\mbox{ if }u\in \overline{{\rm{\sf R}}(d)},\\
\underline{R} u=0\mbox{ if } 
u\in \overline{{\rm{\sf R}}(\underline{\delta})}\stackrel{\bot}{\oplus}
{\rm{\sf N}}(D_{{}^\Vert}).
\end{cases}
$$
In particular, we have that
$$
{\rm I}\,=d\underline{R}+\overline{\underline{R}d}
+{\mathcal{P}}_{{\rm{\sf N}}(D_{{}^\Vert})}.
$$
Note that $\underline{R}$ is a potential operator, in the sense that, if
$u\in{\rm{\sf R}}(d)$ then $u=df$ where $f=\underline{R}u$.
\end{definition}

\begin{remark}
\label{rem:smooth-or-convex}
If the domain $\Omega\subset{\mathbb{R}}^n$ is convex or of class 
${\mathscr{C}}^{1,1}$, we have that
${\rm{\sf D}}(D_{{}^\bot}),{\rm{\sf D}}(D_{{}^\Vert})\subset H^1(\Omega,\Lambda)$
(see \cite[Theorems~2.9, 2.12, 2.17]{ABDG98} for the proof in dimension $n=3$,
\cite[Theorem~4.10 and Remark~4.11]{AMcI03}).
This is however not true in general.
If $\Omega$ is a strongly Lipschitz domain, then it can be proved that
${\rm{\sf D}}(D_{{}^\bot}),{\rm{\sf D}}(D_{{}^\Vert})\subset 
H^{\frac{1}{2}}(\Omega,\Lambda)$ as shown in \cite{Co90} in dimension~3
and \cite[Theorem~11.2]{MMT01} in arbitrary dimension (see also the estimate
\eqref{eq:solvability} below). 
\end{remark}

\begin{remark} 
\label{rem:D||,DT}
At this point we remark that the theory concerning the Hodge-Dirac operator 
with normal boundary conditions, $D_{{}^\bot}=\delta^*+\delta=\underline{d}+\delta$, 
is entirely analogous to the theory concerning the Hodge-Dirac operator with 
tangential boundary conditions, $D_{{}^\Vert}=d+d^*=d+\underline{\delta}$.
Either the proofs for one can be mimicked for the other, or the results for one 
can be obtained form the results for the other by the Hodge star operator and 
appropriate changes of sign. So from now on we will state our results for $d$, 
$\underline{\delta}$ and $D_{{}^\Vert}$, noting here that corresponding results 
hold for $\delta$, $\underline{d}$ and $D_{{}^\bot}$.
\end{remark}

\section{Potential operators on very weakly Lipschitz domains}
\label{sec:potentials}

The unit ball $B=B(0,1)$  in ${\mathbb{R}}^n$ is starlike with respect to the ball 
$\frac{1}{2}B:=B(0,\frac{1}{2})$.
For $p\in(1,\infty)$ and $s\in{\mathbb{R}}$, let 
$R_B:W^{s-1,p}(B,\Lambda)\to W^{s,p}(B,\Lambda)$
be a Poincar\'e-type map (relative to a non negative smooth function 
$\theta\in{\mathscr{D}}:={\mathscr{C}}_c^\infty(B)$ with support in 
$\frac{1}{2}B$ and $\int\theta=1$) as defined in 
\cite[Definition~3.1 and (3.9)]{CMcI10} (building on \cite{MMM08}; see 
also \cite{Bo79}) in the case of domains which are starlike with respect to a ball. 
In those papers a theory of potential operators in Sobolev spaces on strongly 
Lipschitz domains was developed. In this section we follow some of the 
techniques developed there to consider a somewhat different context, namely 
potential operators mapping $L^p(\Omega,\Lambda)$ to 
$L^{p^S}(\Omega,\Lambda)$ on very weakly Lipschitz domains.

The operator $R_B$ has the following representation 
\begin{align}
\label{eq:defRB}
&R_Bf_\ell(y):=\int_{\frac{1}{2}B}\theta(a)(y-a)\lrcorner\,\Bigl(\int_0^1t^{\ell-1}
f_\ell(a+t(y-a))\,{\rm d}t\Bigr)\,{\rm d}a\\
&\mbox{for an $\ell$-form }f_\ell\ (\ell=1,\dots,n) 
\nonumber
\end{align}
($R_Bf_0=0$) and satisfies
\begin{equation}
\label{eq:Rd+dR=I-K}
R_B d\,f+d\, R_Bf=f-K_Bf\quad
\mbox{where}\quad K_Bf={}_{{\mathscr{D}}}\langle\theta,
f_0\rangle_{{\mathscr{D}}'}\, e_\emptyset
\end{equation}
for all $f=f_0+f_1+\cdots f_n\in W^{s,p}(B,\Lambda)=
W^{s,p}(B,\Lambda^0)\oplus W^{s,p}(B,\Lambda^1)\oplus\cdots\oplus 
W^{s,p}(B,\Lambda^n)$, where ${}_{{\mathscr{D}}}\langle\cdot,
\cdot\rangle_{{\mathscr{D}}'}$ denotes the duality pairing between 
${\mathscr{D}}$ and ${\mathscr{D}}'$.
The operator $K_B$ is infinitely smoothing in the sense that 
for all $f\in{\mathscr{D}}'$, $K_Bf\in{\mathscr{C}}^\infty(B,\Lambda)$.
Moreover, $K_Bf=0$ if $f=dg$ for $g\in{\rm{\sf D}}(d,B)$, which implies that
the operator $R_B$ is a true potential for $d$ on $B$ in the sense that 
for all $p\in(1,\infty)$
\begin{equation}
\label{eq:dR}
\mbox{if } f\in{\rm{\sf R}}^p(d,B),\mbox{ then }f=d\,R_Bf.
\end{equation}
The mapping properties of $R_B$ imply in particular that, 
\begin{equation}
\label{eq:est-dR}
dR_B:L^p(B,\Lambda)\to L^p(B,\Lambda),\quad \forall p\in(1,\infty),
\end{equation}
so that $dR_B$ is a projection from $L^p(B,\Lambda)$ onto ${\rm{\sf R}}^p(d,B)$.
We also have that for $p\in(1,\infty)$, the adjoint operator of $R_B$, ${R_B}^*$, 
maps $L^p(B,\Lambda)$ to 
$W^{1,p}_{\overline{B}}(\Lambda)\hookrightarrow L^{p^S}(B,\Lambda)$
where $p^S$ is as in Notation~\ref{not:p,p',pS,p*}. Therefore, thanks to 
Remark~\ref{rem:qS'}, we have that
\begin{equation}
\label{eq:R:Lq->LqS}
R_B:L^p(B,\Lambda)\to L^{p^S}(B,\Lambda)\cap {\textsf{D}}^p(d,B)
\end{equation}
and
\begin{equation}
\label{eq:K:Lp->L8}
K_B:L^p(B,\Lambda)\to L^\infty(B,\Lambda)\cap{\textsf{D}}^p(d,B)
\end{equation}
are bounded for all $p\in(1,\infty)$. Since the range of $K_B$ is one-dimensional, 
the operator $K_B$ is compact in $L^p(B,\Lambda)$ for every $p\in(1,\infty)$. 

Let now $\Omega\subset{\mathbb{R}}^n$ satisfy property \eqref{eq:Omega}:
$\Omega=\cup_{j=1}^M\rho_j(B)$ with $\chi_j:\Omega\to[0,1]$
Lipschitz functions such that ${\rm sppt}_\Omega\,\chi_j\subset \rho_j(B)$ 
and $\sum_{j=1}^M\chi_j=1$ on $\Omega$. Following the construction
of \cite{CMcI10} we define for $u\in L^p(\Omega,\Lambda)$
$$
\tilde R_\Omega u=\sum_{j=1}^M\chi_j (\rho_j^*)^{-1} R_B(\rho_j^*u).
$$
By Remark \ref{rem:bounds},   
$\tilde R_\Omega:L^p(\Omega,\Lambda)\to L^{p^S}(\Omega,\Lambda)
\cap {\textsf{D}}^p(d,B)$ for all $q\in (1,\infty)$. Moreover, for all 
$u\in {\rm{\sf D}}^p(d,\Omega)$ we have, thanks to the product rule 
\eqref{eq:product-rule}, the commutation
property \eqref{eq:com} and the relation \eqref{eq:Rd+dR=I-K} satisfied by $R_B$,
that
\begin{align*}
d\tilde R_\Omega u&=
\sum_{j=1}^M\chi_j d\bigl[(\rho_j^*)^{-1} R_B(\rho_j^*u)\bigr]
+\sum_{j=1}^M\nabla\chi_j\wedge\bigl[(\rho_j^*)^{-1} R_B(\rho_j^*u)\bigr]
\\
&=\sum_{j=1}^M\chi_j \bigl[(\rho_j^*)^{-1} d R_B(\rho_j^*u)\bigr]
+\sum_{j=1}^M\nabla\chi_j\wedge\bigl[(\rho_j^*)^{-1} R_B(\rho_j^*u)\bigr]
\\
&=\sum_{j=1}^M\chi_j \bigl[(\rho_j^*)^{-1} ({\rm I}-K_B-R_Bd)(\rho_j^*u)\bigr]
+\sum_{j=1}^M\nabla\chi_j\wedge\bigl[(\rho_j^*)^{-1} R_B(\rho_j^*u)\bigr]
\\
&=u-\tilde R_\Omega d u-\tilde K_\Omega u
\end{align*}
where
$$
\tilde K_\Omega u=\sum_{j=1}^M\Bigl(\chi_j(\rho_j^*)^{-1}K_B(\rho_j^*u)
-\nabla\chi_j\wedge\bigl[(\rho_j^*)^{-1} R_B(\rho_j^*u)\bigr]\Bigr).
$$
The operator $\tilde K_\Omega$ is compact in $L^p(\Omega,\Lambda)$ for
all $p\in(1,\infty)$; it is indeed a sum of compositions of bounded operators 
($\rho_j^*$, $(\rho_j^*)^{-1}$ and multiplication with $\chi_j$ or $\nabla\chi_j$) with
compact operators ($K_B$ and $R_B$). 

The relation $d\tilde R_\Omega +\tilde R_\Omega d={\rm I}\,-\tilde K_\Omega$
on ${\rm{\sf D}}^p(d,\Omega)$ implies directly that $\tilde K_\Omega$ 
commutes with $d$ on ${\rm{\sf D}}^p(d,\Omega)$. Moreover,
thanks to the mapping properties of $R_B$ and $K_B$, it is clear that
$\tilde K_\Omega$ maps $L^q(\Omega,\Lambda)$ to $L^{q^S}(\Omega,\Lambda)$
for all $q\in (1,\infty)$. It is also obvious that $\tilde K_\Omega$ maps 
$L^p(\Omega,\Lambda)$ to ${\textsf{D}}^p(d,\Omega)$ thanks to the mapping
properties of $R_B$ and $K_B$, the commutation property \eqref{eq:com}
and the product rules \eqref{eq:product-rule} and \eqref{eq:product-rule3}.
Therefore, we see that $\tilde K_\Omega\phantom{}^n$ maps 
$L^p(\Omega,\Lambda)$ to 
$L^\infty(\Omega,\Lambda)\cap {\textsf{D}}^p(d,\Omega)$ for all $p>1$.
We define the following operators $\tilde{\tilde R}_\Omega$ and 
$\tilde{\tilde K}_\Omega$:
$$
\tilde{\tilde R}_\Omega:=\bigl({\rm I}+{\tilde K}_\Omega
+{\tilde K_\Omega}\phantom{}^{2}+\dots
+{\tilde K_\Omega}\phantom{}^{n-1}\bigr)\tilde R_\Omega\quad\mbox{and}\quad
\tilde{\tilde K}_\Omega:={\tilde K_\Omega}\phantom{}^n.
$$
It follows that $\tilde{\tilde K}_\Omega$ is compact in $L^p(\Omega,\Lambda)$
for all $p\in(1,\infty)$ (as a composition of compact operators) and
$$
\begin{array}{ll}
\tilde{\tilde R}_\Omega:L^p(\Omega,\Lambda)\to L^{p^S}(\Omega,\Lambda)\cap
{\textsf{D}}^p(d,\Omega),\quad
&\forall\, p\in(1,\infty)
\\[4pt]
\tilde{\tilde K}_\Omega:L^p(\Omega,\Lambda)\to L^\infty(\Omega,\Lambda)\cap
{\textsf{D}}^p(d,\Omega),\quad
&\forall\, p\in(1,\infty),
\\[4pt]
d\tilde{\tilde R}_\Omega+\tilde{\tilde R}_\Omega d={\rm I}\,-\tilde{\tilde K}_\Omega, 
&d\tilde{\tilde K}_\Omega=\tilde{\tilde K}_\Omega d \qquad\mbox{on }
{\rm{\sf D}}^p(d,\Omega).
\end{array}
$$

Note that $\tilde{\tilde R}_\Omega$ is a potential operator modulo compactness, 
in the sense that, if $u\in{\rm{\sf R}}^p(d,\Omega)$, then 
$u=df+\tilde{\tilde K}_\Omega u$ where $f=\tilde{\tilde R}_\Omega u$.
It is good enough for most purposes, but it can be improved as follows. Define
\begin{equation}
\label{eq:R,K}
R_\Omega:=\tilde{\tilde R}_\Omega +\tilde{\tilde K}_\Omega\tilde{\tilde R}_\Omega
+\tilde{\tilde K}_\Omega\underline{R}\tilde{\tilde K}_\Omega
\quad\mbox{and}\quad
K_\Omega:=\tilde{\tilde K}_\Omega{\mathcal{P}}_{{\rm{\sf N}}(D_{{}^\Vert})}
\tilde{\tilde K}_\Omega,
\end{equation}
where $\underline{R}$ and ${\mathcal{P}}_{{\rm{\sf N}}(D_{{}^\Vert})}$ were defined
in Definition~\ref{def:projections}. On noting that $K_\Omega$ is zero 
on ${\rm{\sf R}}^p(d,\Omega)$, we see that $R_\Omega$ is a true potential operator 
in the sense that, if $u\in{\rm{\sf R}}^p(d,\Omega)$, then $u=df$ where 
$f=R_\Omega u$. It is not as natural in $L^2(\Omega, \Lambda)$ as the potential 
operator $\underline{R}$, but it has the advantage of working for all $p\in(1,\infty)$.
(We remark that a similar improvement could be made to the potential operators in 
strongly Lipschitz domains studied in \cite{CMcI10}.)

Using duality and the Hodge star operator we 
have similar properties for potential operators associated with $\delta$, 
$\underline{d}$ and $\underline{\delta}$. 
We define
\begin{align*}
&\star Q_\Omega u:=(-1)^{\ell-1}R_\Omega(\star\,u),\ 
\star L_\Omega u:=K_\Omega(\star\,u)\quad
\mbox{for an $\ell$-form }u;
\\
&T_\Omega u:=Q_\Omega^*u;
\\
&\star S_\Omega u:=(-1)^{\ell-1}T_\Omega(\star\,u),\ 
\quad\mbox{for an $\ell$-form }u.
\end{align*}

The properties of the operators $R_\Omega$, $S_\Omega$ and $K_\Omega$ 
are summarised in the following proposition. The properties of $T_\Omega$, 
$Q_\Omega$ and $L_\Omega$, can be deduced in a straightforward way.

\begin{proposition}
\label{prop:propR,K}
Suppose $\Omega$ is a very weakly Lipschitz  domain. 
Then the potential operators $R_\Omega$, $S_\Omega$ and $K_\Omega$ 
defined above satisfy for all $p\in (1,\infty)$
\begin{align*}
&R_\Omega: 
L^p(\Omega,\Lambda)\to L^{p^S}(\Omega,\Lambda)\cap{\textsf{D}}^p(d,\Omega),
\quad 
S_\Omega:
L^p(\Omega,\Lambda)\to L^{p^S}(\Omega,\Lambda)\cap
{\textsf{D}}^p(\underline{\delta},\Omega),
\\[4pt]
&K_\Omega:L^p(\Omega,\Lambda)\to L^\infty(\Omega,\Lambda)
\cap{\textsf{D}}^p(d,\Omega), 
\quad
K_\Omega^*:L^p(\Omega,\Lambda)\to L^\infty(\Omega,\Lambda)
\cap{\textsf{D}}^p(\underline{\delta},\Omega),
\\[4pt]
&K_\Omega, K_\Omega^*\mbox{ are compact operators in }L^p(\Omega,\Lambda),
\\[4pt]
&dR_\Omega+R_\Omega d={\rm I}\,-K_\Omega,
\qquad 
\underline{\delta}S_\Omega+S_\Omega\underline{\delta}={\rm I}\,-K_\Omega^*,
\\[4pt]
&dK_\Omega=0,\quad \underline{\delta}K_\Omega^*=0 \quad \mbox{and}\quad
K_\Omega=0 \mbox{ on }{\rm{\sf R}}^p(d,\Omega), \quad K_\Omega^*=0
\mbox{ on }{\rm{\sf R}}^p(\underline{\delta},\Omega),
\\[4pt]
&dR_\Omega u=u\mbox{ if }u\in{\rm{\sf R}}^p(d,\Omega), \quad
\underline{\delta}S_\Omega u=u
\mbox{ if }u\in{\rm{\sf R}}^p(\underline{\delta},\Omega).
\end{align*}
\end{proposition}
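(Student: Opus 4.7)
The plan is to verify every claim directly for $R_\Omega$ and $K_\Omega$ from the properties of the building blocks $\tilde{\tilde R}_\Omega$, $\tilde{\tilde K}_\Omega$, $\underline{R}$ and ${\mathcal{P}}_{{\rm{\sf N}}(D_{{}^\Vert})}$ already established earlier in this section, and then to transfer the conclusions to $S_\Omega$ and $K_\Omega^*$ by combining the Hodge star relation \eqref{eq:*d=delta} with a Hilbert space adjoint, following the chain of definitions $R_\Omega \leadsto Q_\Omega \leadsto T_\Omega = Q_\Omega^* \leadsto S_\Omega$ set up just before the proposition, as advertised in Remark~\ref{rem:D||,DT}.

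The mapping properties are read off from the three summands of $R_\Omega$ in \eqref{eq:R,K}. The term $\tilde{\tilde R}_\Omega$ already has the correct target $L^{p^S}(\Omega,\Lambda)\cap {\rm{\sf D}}^p(d,\Omega)$; the term $\tilde{\tilde K}_\Omega \tilde{\tilde R}_\Omega$ composes an $L^p\to L^{p^S}$ map with an $L^{p^S}\to L^\infty\cap {\rm{\sf D}}^p(d,\Omega)$ map; and the term $\tilde{\tilde K}_\Omega\underline{R}\tilde{\tilde K}_\Omega$ uses that $\tilde{\tilde K}_\Omega$ takes $L^p$ into $L^\infty\subset L^2$ (as $\Omega$ is bounded), that $\underline{R}$ is bounded on $L^2$, and that a second application of $\tilde{\tilde K}_\Omega$ again yields $L^\infty\cap {\rm{\sf D}}^p(d,\Omega)$. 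For $K_\Omega = \tilde{\tilde K}_\Omega{\mathcal{P}}_{{\rm{\sf N}}(D_{{}^\Vert})}\tilde{\tilde K}_\Omega$, the insertion of the projection onto the finite-dimensional space ${\rm{\sf N}}(D_{{}^\Vert})$ forces $K_\Omega$ to factor through a finite-dimensional subspace, so $K_\Omega$ is compact on every $L^p(\Omega,\Lambda)$ and inherits the $L^\infty\cap {\rm{\sf D}}^p(d,\Omega)$ mapping property from the outer $\tilde{\tilde K}_\Omega$.

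The heart of the argument is the identity $dR_\Omega + R_\Omega d = {\rm I} - K_\Omega$ on ${\rm{\sf D}}^p(d,\Omega)$. I would prove it by adding $dR_\Omega u$ and $R_\Omega du$ term by term, using $d\tilde{\tilde K}_\Omega = \tilde{\tilde K}_\Omega d$ and $d\tilde{\tilde R}_\Omega+\tilde{\tilde R}_\Omega d={\rm I}-\tilde{\tilde K}_\Omega$. The first two summands telescope to $u-\tilde{\tilde K}_\Omega^{\,2} u$. For the third summand I would apply the $L^2$ Hodge identity ${\rm I}=d\underline{R}+\overline{\underline{R} d}+{\mathcal{P}}_{{\rm{\sf N}}(D_{{}^\Vert})}$ from Definition~\ref{def:projections} to $v=\tilde{\tilde K}_\Omega u\in L^\infty\cap {\rm{\sf D}}^2(d,\Omega)$, which on ${\rm{\sf D}}^2(d,\Omega)$ reads $d\underline{R} v = v - \underline{R} dv - {\mathcal{P}}_{{\rm{\sf N}}(D_{{}^\Vert})} v$. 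Combining with $d\tilde{\tilde K}_\Omega=\tilde{\tilde K}_\Omega d$ once more, the third summand contributes $\tilde{\tilde K}_\Omega^{\,2} u-\tilde{\tilde K}_\Omega\underline{R}\tilde{\tilde K}_\Omega du-\tilde{\tilde K}_\Omega{\mathcal{P}}_{{\rm{\sf N}}(D_{{}^\Vert})}\tilde{\tilde K}_\Omega u$ to $dR_\Omega u$, and the cross term $\tilde{\tilde K}_\Omega \underline{R} \tilde{\tilde K}_\Omega du$ cancels against the matching piece of $R_\Omega du$. What remains is precisely $u - K_\Omega u$.

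The remaining points are short consequences: $dK_\Omega=0$ because ${\rm{\sf N}}(D_{{}^\Vert})\subset {\rm{\sf N}}(d)$ gives $d{\mathcal{P}}_{{\rm{\sf N}}(D_{{}^\Vert})}=0$ and one commutes $d$ past the outer $\tilde{\tilde K}_\Omega$; $K_\Omega u=0$ for $u=dv\in {\rm{\sf R}}^p(d,\Omega)$ because $\tilde{\tilde K}_\Omega u=d\tilde{\tilde K}_\Omega v$ lies in $\overline{{\rm{\sf R}}^2(d)}$, which is $L^2$-orthogonal to ${\rm{\sf N}}(D_{{}^\Vert})$; and $dR_\Omega u = u$ on ${\rm{\sf R}}^p(d,\Omega)$ follows from the commutator identity together with $du=d^2 v=0$ and $K_\Omega u=0$. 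The corresponding statements for $S_\Omega$ and $K_\Omega^*$ are deduced by applying these results to the Hodge-star conjugated operators and taking $L^2$ Hilbert-space adjoints. I expect the main technical obstacle to be the bookkeeping in the third summand of the commutator: one must justify that the $L^2$-defined operator $\underline{R}$ may be interlaced with the $L^p$-bounded copies of $\tilde{\tilde K}_\Omega$, and that on ${\rm{\sf D}}^2(d,\Omega)$ the pointwise Hodge identity replaces the closure $\overline{\underline{R} d}$ by $\underline{R} d$. Both are legitimate because $\tilde{\tilde K}_\Omega u \in L^\infty \cap {\rm{\sf D}}^2(d,\Omega)$, placing the entire argument inside ${\rm{\sf D}}^2(d,\Omega)$, where the Hodge identity and the commutation $d\tilde{\tilde K}_\Omega=\tilde{\tilde K}_\Omega d$ hold without any closure ambiguity.
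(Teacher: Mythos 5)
Your proof is correct and reconstructs precisely the argument the paper leaves implicit: the mapping properties and compactness from the established properties of $\tilde{\tilde R}_\Omega$, $\tilde{\tilde K}_\Omega$, $\underline{R}$ and ${\mathcal{P}}_{{\rm{\sf N}}(D_{{}^\Vert})}$; the telescoping computation for $dR_\Omega + R_\Omega d$ using the $L^2$ Hodge identity on $\tilde{\tilde K}_\Omega u\in L^\infty\cap{\rm{\sf D}}^2(d,\Omega)$; and the transfer to $S_\Omega$, $K_\Omega^*$ via Hodge star and $L^2$ adjoints. One small point to tighten: in the third summand $\tilde{\tilde K}_\Omega\underline{R}\tilde{\tilde K}_\Omega$ the outer $\tilde{\tilde K}_\Omega$ only receives an $L^2$ input, so when $p>2$ membership of the output in ${\rm{\sf D}}^p(d,\Omega)$ is not immediate from the stated mapping property of $\tilde{\tilde K}_\Omega$ on $L^2$; it follows by commuting $d$ past the outer $\tilde{\tilde K}_\Omega$ on ${\rm{\sf D}}^2(d,\Omega)$, since $\tilde{\tilde K}_\Omega\bigl(d\underline{R}\tilde{\tilde K}_\Omega u\bigr)\in L^\infty\subset L^p$.
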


As direct consequence we obtain that $dR_\Omega$, 
$\underline{\delta}S_\Omega$, $\underline{d}T_\Omega$, and
$\delta Q_\Omega$ are projections from 
$L^p(\Omega,\Lambda)$ onto the ranges of $d$, $\underline{d}$, $\delta$ or 
$\underline{\delta}$ for all $p\in(1,\infty)$. 

\begin{corollary}
\label{cor:complex-interpolation-scale}
Suppose $\Omega$ is a very weakly Lipschitz domain. Then 
\begin{enumerate}[(i)]
\item
\label{4.2(i)}
for all $p\in(1,\infty)$, the spaces ${\rm{\sf R}}^p(d,\Omega)$, 
${\rm{\sf R}}^p(\underline{d},\Omega)$, ${\rm{\sf R}}^p(\delta,\Omega)$ and 
${\rm{\sf R}}^p(\underline{\delta},\Omega)$ are closed linear subspaces of 
$L^p(\Omega,\Lambda)$; 
\item
\label{4.2(ii)}
for all $p\in(1,\infty)$,the operators $d$, $\underline{d}$, $\delta$ and 
$\underline{\delta}$ are closed (unbounded) operators in $L^p(\Omega,\Lambda)$;
\item
\label{4.2(iii)}
there exist finite dimensional subspaces ${\mathcal{Z}}_d, 
{\mathcal{Z}}_\delta \subset L^\infty(\Omega,\Lambda)$, 
${\mathcal{Z}}_{\underline{d}},{\mathcal{Z}}_{\underline{\delta}}\subset 
\cap_{q<\infty}L^q(\Omega,\Lambda)$ such that 
${\rm{\sf N}}^p(d,\Omega)= {\rm{\sf R}}^p(d,\Omega)\oplus {\mathcal{Z}}_d$, 
${\rm{\sf N}}^p(\delta,\Omega)= 
{\rm{\sf R}}^p(\delta,\Omega)\oplus {\mathcal{Z}}_\delta$, 
${\rm{\sf N}}^p({\underline{d}},\Omega)= 
{\rm{\sf R}}^p({\underline{d}},\Omega)\oplus {\mathcal{Z}}_{\underline{d}}$ 
and ${\rm{\sf N}}^p({\underline{\delta}},\Omega)
= {\rm{\sf R}}^p({\underline{\delta}},\Omega)\oplus {\mathcal{Z}}_{\underline{\delta}}$ 
for all $p\in(1,\infty)$.
\item
\label{4.2(iv)}
The families of spaces $\bigl\{{\rm{\sf R}}^p(d,\Omega),1<p<\infty\bigr\}$, 
$\bigl\{{\rm{\sf R}}^p(\underline{d},\Omega),1<p<\infty\bigr\}$, 
$\bigl\{{\rm{\sf R}}^p(\delta,\Omega),1<p<\infty\bigr\}$ and
$\bigl\{{\rm{\sf R}}^p(\underline{\delta},\Omega),1<p<\infty\bigr\}$
are complex interpolation scales.  So too are the families of nulspaces.
\item
\label{4.2(v)}
When $1<p<q<\infty$, then ${\rm{\sf R}}^q(d,\Omega)= {\rm{\sf R}}^p(d,\Omega)
\cap L^q(\Omega,\Lambda)$, and similarly for the other range spaces.
\item 
\label{4.2(vi)}
When $1<p<q<\infty$, then ${\rm{\sf R}}^q(d,\Omega)$ is dense in 
${\rm{\sf R}}^p(d,\Omega)$, and similarly for the other range spaces.
\end{enumerate}
\end{corollary}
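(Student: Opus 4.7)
The plan is to extract all six parts from the operator identities in Proposition~\ref{prop:propR,K}. Part~\eqref{4.2(ii)} is immediate: $d$, $\underline{d}$, $\delta$, $\underline{\delta}$ act on their graph domains $\{u\in L^p\,;\,du\in L^p\}$ etc., and are therefore closed as distributional differential operators. For part~\eqref{4.2(i)}, the bounded operator $dR_\Omega$ on $L^p(\Omega,\Lambda)$ satisfies $(dR_\Omega)^2=dR_\Omega$, since $dR_\Omega v=v$ for every $v\in{\rm{\sf R}}^p(d,\Omega)$; hence $dR_\Omega$ is a bounded projection whose range ${\rm{\sf R}}^p(d,\Omega)$ is automatically closed. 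The same argument applies to ${\rm{\sf R}}^p(\underline{\delta},\Omega)$, ${\rm{\sf R}}^p(\delta,\Omega)$ and ${\rm{\sf R}}^p(\underline{d},\Omega)$ via $S_\Omega$, $Q_\Omega$, $T_\Omega$.

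The substantive point is part~\eqref{4.2(iii)}. I would take ${\mathcal{Z}}_d:=K_\Omega({\rm{\sf N}}^p(d,\Omega))$, which is finite-dimensional (since $K_\Omega$ has finite rank) and contained in $L^\infty(\Omega,\Lambda)\cap{\rm{\sf N}}^p(d,\Omega)$ (since $K_\Omega$ maps $L^p$ into $L^\infty$ and $dK_\Omega=0$). On ${\rm{\sf N}}^p(d,\Omega)$ the relation $dR_\Omega+R_\Omega d={\rm I}-K_\Omega$ simplifies to ${\rm I}=dR_\Omega+K_\Omega$. Since $(dR_\Omega)^2=dR_\Omega$ and $dR_\Omega$ maps into ${\rm{\sf R}}^p(d,\Omega)\subset{\rm{\sf N}}^p(d,\Omega)$, the algebraic identity $({\rm I}-P)^2={\rm I}-P$ (valid whenever $P$ is a projection) forces $K_\Omega|_{{\rm{\sf N}}^p(d,\Omega)}={\rm I}-dR_\Omega|_{{\rm{\sf N}}^p(d,\Omega)}$ to be a projection as well, with kernel ${\rm{\sf R}}^p(d,\Omega)$ and image ${\mathcal{Z}}_d$; this is the desired direct sum. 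The $p$-independence follows because, for $p<q$ and $u\in{\rm{\sf N}}^p(d,\Omega)$, one has $K_\Omega u\in L^\infty\cap{\rm{\sf N}}^q(d,\Omega)$ and $K_\Omega(K_\Omega u)=K_\Omega u$, placing $K_\Omega u$ in $K_\Omega({\rm{\sf N}}^q(d,\Omega))$; the reverse inclusion is trivial. The construction of ${\mathcal{Z}}_\delta\subset L^\infty$ via $L_\Omega$ is identical. For ${\mathcal{Z}}_{\underline{\delta}}$ and ${\mathcal{Z}}_{\underline{d}}$, the same argument applies with $K_\Omega^*$, $L_\Omega^*$ in place of $K_\Omega$, $L_\Omega$, using the adjoint identity $\underline{\delta}S_\Omega+S_\Omega\underline{\delta}={\rm I}-K_\Omega^*$; but $K_\Omega^*$, being the adjoint of an operator $L^p\to L^\infty$ for every $p\in(1,\infty)$, maps each $L^r$ into $\bigcap_{q<\infty}L^q$ rather than into $L^\infty$, which accounts for the weaker regularity.

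Part~\eqref{4.2(v)} follows from the $L^q$-mapping property of $R_\Omega$: if $u\in{\rm{\sf R}}^p(d,\Omega)\cap L^q(\Omega,\Lambda)$ with $p<q$, then $u=dR_\Omega u$ (from Proposition~\ref{prop:propR,K}) and $R_\Omega u\in{\rm{\sf D}}^q(d,\Omega)$ (same proposition applied with $L^q$), so $u\in{\rm{\sf R}}^q(d,\Omega)$; the reverse inclusion is trivial since $\Omega$ is bounded. Part~\eqref{4.2(vi)} follows because $L^q$ is dense in $L^p$ and $dR_\Omega$ is bounded on $L^p$: for $v=dR_\Omega u\in{\rm{\sf R}}^p(d,\Omega)$, approximate $u$ by $u_n\in L^q$ with $u_n\to u$ in $L^p$; then $dR_\Omega u_n\in{\rm{\sf R}}^q(d,\Omega)$ converges to $v$ in $L^p$. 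For part~\eqref{4.2(iv)}, both $dR_\Omega$ and $E_p:=dR_\Omega+K_\Omega$ are bounded projections on $L^p(\Omega,\Lambda)$, defined by $p$-independent formulas; the first has range ${\rm{\sf R}}^p(d,\Omega)$ and the second has range ${\rm{\sf N}}^p(d,\Omega)$ (its image lies in ${\rm{\sf N}}^p(d,\Omega)$ because $d^2=dK_\Omega=0$, and it acts as the identity there by the decomposition from part~\eqref{4.2(iii)}). The standard theorem on complex interpolation of compatibly complemented subspaces then yields the interpolation scales for both ranges and nullspaces; the three analogous constructions treat $\delta$, $\underline{d}$, $\underline{\delta}$.

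The main obstacle is part~\eqref{4.2(iii)}, specifically choosing ${\mathcal{Z}}_d$ independently of $p$. The device that overcomes it is the recognition that $K_\Omega$ restricted to ${\rm{\sf N}}^p(d,\Omega)$ is automatically a projection (via ${\rm I}=dR_\Omega+K_\Omega$ combined with $(dR_\Omega)^2=dR_\Omega$); the smoothing $K_\Omega(L^p)\subset L^\infty$ together with $K_\Omega^2=K_\Omega$ on ${\rm{\sf N}}^p(d,\Omega)$ then lets one compare images across all $p\in(1,\infty)$.
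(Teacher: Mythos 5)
Your proof is correct and follows essentially the same route as the paper, built on the identities $dR_\Omega + R_\Omega d = {\rm I} - K_\Omega$, $dR_\Omega = {\rm I}$ on ${\rm{\sf R}}^p(d)$, $K_\Omega = 0$ on ${\rm{\sf R}}^p(d)$, $dK_\Omega = 0$, and the compactness and smoothing of $K_\Omega$. Parts \eqref{4.2(i)}, \eqref{4.2(iii)}, \eqref{4.2(v)}, \eqref{4.2(vi)}, and the $p$-independence argument in \eqref{4.2(iii)} (via $K_\Omega = K_\Omega^2$ on the nullspace and the $L^\infty$-smoothing) coincide with the paper's argument almost line-by-line.

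There are two points where you diverge, one of which is a genuine improvement. For part \eqref{4.2(iv)}, the paper simply invokes interpolation of images of bounded projections and points to references, but never names the projection onto the nullspace. You explicitly observe that $E_p := dR_\Omega + K_\Omega$ is a bounded operator on $L^p(\Omega,\Lambda)$ that is idempotent (the cross-terms work out since $K_\Omega\,dR_\Omega = 0$ and $dR_\Omega K_\Omega = K_\Omega - K_\Omega^2$ via the relation on ${\rm{\sf D}}^p(d)$), lands in ${\rm{\sf N}}^p(d,\Omega)$ because $d^2 = 0 = dK_\Omega$, and acts as the identity there. This is a clean, $p$-independent projection onto ${\rm{\sf N}}^p(d,\Omega)$, which is exactly what is needed to invoke the complemented-subspace interpolation theorem for the nullspace family; the paper leaves this construction implicit. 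For part \eqref{4.2(ii)}, you shortcut the paper's potential-based argument by observing that closedness is automatic: $d$ and $\delta$ with maximal domain are closed as distributional operators, and $\underline{d}$, $\underline{\delta}$ are closed by definition as closures (or as adjoints). This is valid and simpler than the paper's route through $(i)$, though your phrase ``act on their graph domains $\{u\in L^p\,;\,du\in L^p\}$ etc.'' misdescribes the domains of $\underline{d}$, $\underline{\delta}$, which are the smaller completion of ${\mathscr{C}}_c^\infty$ in the graph norm rather than the maximal domain; this is a cosmetic slip and doesn't affect the conclusion. The observation about $K_\Omega^*$ mapping into $\cap_{q<\infty}L^q$ correctly explains the weaker integrability claimed for ${\mathcal{Z}}_{\underline d}$, ${\mathcal{Z}}_{\underline\delta}$.
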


\begin{proof}
\begin{enumerate}[$(i)$]
\item
This follows from the fact  that the ranges are images of bounded projections. 
\item
The cases of $\delta$, $\underline{d}$ and $\underline{\delta}$ are similar to the 
case of $d$.
Let $(u_k)_{k\in{\mathbb{N}}}$ be a sequence in ${\textsf{D}}^p(d,\Omega)$ 
converging to $u$ in $L^p(\Omega,\Lambda)$ such that 
$(du_k)_{k\in{\mathbb{N}}}$ converges to $v$ in $L^p(\Omega,\Lambda)$. 
By $(i)$, $v\in {\rm{\sf R}}^p(d,\Omega)$ (in particular,
$v=dR_\Omega v$) and by Proposition~\ref{prop:propR,K}, we have that
$u=dR_\Omega u+R_\Omega v+K_\Omega u$. Therefore 
$u\in {\textsf{D}}^p(d,\Omega)$ satisfies 
$du=d(dR_\Omega u+R_\Omega v+K_\Omega u)=dR_\Omega v=v$ since
$d^2=0$ and $dK_\Omega=0$. This proves
that $d$ is a closed operator in $L^p(\Omega,\Lambda)$.
\item 
We just consider the case of $d$. Let 
${\mathcal{Z}}^p_d = K_\Omega({\rm{\sf N}}^p(d,\Omega))\subset 
L^\infty(\Omega)$. Then ${\rm{\sf N}}^p(d,\Omega) 
= {\rm{\sf R}}^p(d,\Omega)\oplus {\mathcal{Z}}^p_d$ 
with decomposition $u=dR_\Omega u + K_\Omega u$ for all 
$u\in {\rm{\sf N}}^p(d,\Omega)$. So the spaces in the decomposition are closed, 
and ${\mathcal{Z}}^p_d$ is finite dimensional (on account of the compactness 
of $K_\Omega$). Moreover if $u\in{\rm{\sf N}}^q(d,\Omega)$, then 
${K_\Omega} u = {K_\Omega}^2 u \in K_\Omega({\rm{\sf N}}^p(d,\Omega)) 
={\mathcal{Z}}^p_d$ so that ${\mathcal{Z}}^q_d\subset {\mathcal{Z}}^p_d$, 
and conversely ${\mathcal{Z}}^p_d\subset {\mathcal{Z}}^q_d$, implying that 
the spaces ${\mathcal{Z}}^p_d$ are independent of $p$ and can just be named 
${\mathcal{Z}}_d$.
\item
The spaces $L^p(\Omega, \Lambda)$ interpolate by the complex method, and 
hence so do their images under bounded projections 
(see \cite[Chap.\,1, \S14.3]{LiMa68}; see also \cite[Lemma~2.12]{MM08}).
\item 
If $u \in {\rm{\sf R}}^p(d,\Omega)\cap L^q(\Omega,\Lambda)$, then 
$u=d(R_\Omega u) \in {\rm{\sf R}}^q(d,\Omega)$.  
\item
$L^q(\Omega,\Lambda)$ is dense in $L^p(\Omega,\Lambda)$, and so then is 
$dR_\Omega L^q(\Omega,\Lambda)$ dense in 
$dR_\Omega  L^p(\Omega,\Lambda)$.
\qedhere
\end{enumerate}
\end{proof} 

We have now essentially proved Remark~\ref{rem:Hodge-dec-(H)}. In particular 
the $L^2$ range spaces are all closed, and for the space $L^2(\Omega,\Lambda)$, 
the following decompositions are equally valid:
\begin{align*}
L^2(\Omega,\Lambda)&={{\rm{\sf R}}(d)}\stackrel{\bot}{\oplus}
{\rm{\sf R}}(\underline{\delta})\stackrel{\bot}{\oplus}{\rm{\sf N}}(D_{{}^\Vert})
={\rm{\sf R}}(d)\stackrel{\bot}{\oplus}{\rm{\sf R}}(\underline{\delta}) 
\oplus {\mathcal{Z}}_{\underline{\delta}}
={\rm{\sf R}}(\underline{\delta})\stackrel{\bot}{\oplus}{\rm{\sf R}}(d)
\oplus {\mathcal{Z}}_d
\\
&={\rm{\sf R}}(\delta)\stackrel{\bot}{\oplus}{\rm{\sf R}}(\underline{d})
\stackrel{\bot}{\oplus}{\rm{\sf N}}(D_{{}^\bot})
= {\rm{\sf R}}(\delta)\stackrel{\bot}{\oplus}{\rm{\sf R}}(\underline{d})
\oplus {\mathcal{Z}}_{\underline{d}}
={\rm{\sf R}}(\underline{d})\stackrel{\bot}{\oplus}{\rm{\sf R}}(\delta)
\oplus {\mathcal{Z}}_{\delta}
\end{align*}
So the spaces ${\rm{\sf N}}(D_{{}^\Vert})$, ${\mathcal{Z}}_{\underline{\delta}}$ and 
${\mathcal{Z}}_d$ all have the same finite dimension (as indeed do their 
components of $\ell$ forms in $L^2(\Omega,\Lambda^\ell)$, which can be 
identified with the de Rham cohomology spaces of $\Omega$ with tangential 
(absolute) boundary conditions, and thus have dimensions determined by the 
global topology of $\Omega$), as do the spaces ${\rm{\sf N}}(D_{{}^\bot})$, 
${\mathcal{Z}}_{\underline{d}}$ and ${\mathcal{Z}}_\delta$ (and their components, 
which can be identified with the de Rham cohomology spaces of $\Omega$ with 
normal (relative) boundary conditions).

A further important consequence of the existence of these potentials is the
fact that the above Hodge decompositions in $L^2(\Omega,\Lambda)$ 
extend to $L^p(\Omega,\Lambda)$ for $p$ in an interval around $2$.

\begin{theorem}
\label{thm:Hodge-dec-Lp}
Let $\Omega\subset{\mathbb{R}}^n$ be a very weakly Lipschitz domain.
There exist Hodge exponents $p_H$, $p^H = {p_H}'$  with 
$1\le p_H<2<p^H\le\infty$ such that the Hodge decomposition
\begin{equation} 
\label{eq:HodgeLpVert1}\tag{$H_p$}
L^p(\Omega,\Lambda)={\rm{\sf R}}^p(d,\Omega)\oplus
{\rm{\sf R}}^p(\underline{\delta},\Omega)\oplus \bigl({\rm{\sf N}}^p(d,\Omega) 
\cap{\rm{\sf N}}^p(\underline{\delta},\Omega)\bigr)  
\end{equation}
holds if and only if $p_H<p<p^H$. Moreover, for $p$ in this range, 
$D_{{}^\Vert}=d+\underline{\delta}$ (with ${\rm{\sf D}}^p(D_{{}^\Vert}) 
= {\rm{\sf D}}^p(d)\cap{\rm{\sf D}}^p(\underline{\delta})$) is a closed operator in 
$L^p(\Omega,\Lambda)$, and 
${\rm{\sf N}}^p(d,\Omega) \cap{\rm{\sf N}}^p(\underline{\delta},\Omega) 
= {\rm{\sf N}}^p(D_{{}^\Vert}) = {\rm{\sf N}}(D_{{}^\Vert})$, so that 
\begin{equation} 
\label{eq:HodgeLpVert}
L^p(\Omega,\Lambda)={\rm{\sf R}}^p(d,\Omega)\oplus
{\rm{\sf R}}^p(\underline{\delta},\Omega)\oplus {\rm{\sf N}}(D_{{}^\Vert}) ;
\end{equation}  
and also $D_{{}^\bot}= \delta+\underline{d}$ is a closed operator in 
$L^p(\Omega,\Lambda)$ with Hodge decomposition
\begin{equation} 
\label{eq:HodgeLpbot}
L^p(\Omega,\Lambda)={\rm{\sf R}}^p(\delta,\Omega)\oplus
{\rm{\sf R}}^p(\underline{d},\Omega)\oplus
{\rm{\sf N}}(D_{{}^\bot})\ .
\end{equation}
\end{theorem}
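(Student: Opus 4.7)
My plan is to define the Hodge exponents via the $L^p$-boundedness of the orthogonal $L^2$-Hodge projections from Definition~\ref{def:projections}, and then match the resulting range with the validity of $(H_p)$. Let $P_d := {\mathcal{P}}_{{\rm{\sf R}}(d)}$, $P_{\underline{\delta}} := {\mathcal{P}}_{{\rm{\sf R}}(\underline{\delta})}$ and $P_N := {\mathcal{P}}_{{\rm{\sf N}}(D_{{}^\Vert})}$, and set
\[
\mathcal{I} := \bigl\{ p \in (1,\infty) : P_d \text{ extends boundedly from } L^2 \cap L^p \text{ to } L^p(\Omega,\Lambda)\bigr\}.
\]
Self-adjointness of $P_d$ on $L^2$ makes $\mathcal{I}$ symmetric under $p\mapsto p'$, and complex interpolation between $L^p$-spaces makes $\mathcal{I}$ convex. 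Openness of $\mathcal{I}$ is the delicate point: I would address it by invoking Sneiberg's stability theorem applied to the complemented subspace scale $\{{\rm{\sf R}}^p(d)\}_p$, whose interpolation character is provided by Corollary~\ref{cor:complex-interpolation-scale}\eqref{4.2(iv)}. This yields $\mathcal{I}=(p_H,p^H)$ with $1\le p_H<2<p^H\le\infty$ and $p^H=(p_H)'$.

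For the forward direction $p\in\mathcal{I}\Rightarrow(H_p)$, the bounded extensions $P_d^p, P_{\underline{\delta}}^p, P_N^p$ on $L^p$ have ranges inside ${\rm{\sf R}}^p(d)$, ${\rm{\sf R}}^p(\underline{\delta})$ and $H^p := {\rm{\sf N}}^p(d) \cap {\rm{\sf N}}^p(\underline{\delta})$ respectively (using Corollary~\ref{cor:complex-interpolation-scale}\eqref{4.2(v)} together with closedness of the range spaces), and the identity $P_d^p+P_{\underline{\delta}}^p+P_N^p={\rm I}$ inherited from $L^2$ by density of $L^2\cap L^p$ then gives the direct sum decomposition $(H_p)$. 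Conversely, if $(H_p)$ holds with projections $Q_d^p, Q_{\underline{\delta}}^p, Q_N^p$, then for $p>2$ each summand of any $u\in L^p\subset L^2$ lies in $L^2$ and in the corresponding $L^2$-Hodge summand (using ${\rm{\sf R}}^p(d)\subset{\rm{\sf R}}^2(d)$ from Corollary~\ref{cor:complex-interpolation-scale}\eqref{4.2(v)} and $H^p\subset H^2$ from $L^p\subset L^2$); uniqueness of the $L^2$-Hodge decomposition forces $Q_d^p=P_d$ on $L^p$, showing $p\in\mathcal{I}$, and the case $p<2$ follows by transposing $(H_p)$ to $(H_{p'})$ and using the symmetry of $\mathcal{I}$.

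To identify $H^p$ with ${\rm{\sf N}}(D_{{}^\Vert})$, I would argue by dimensions: the injection $H^p\hookrightarrow{\rm{\sf N}}^p(d)/{\rm{\sf R}}^p(d)\cong{\mathcal{Z}}_d$ (from $(H_p)$ and Corollary~\ref{cor:complex-interpolation-scale}\eqref{4.2(iii)}) gives $\dim H^p\le \dim{\mathcal{Z}}_d =: N_d$, duality of the Hodge projections gives $\dim H^p=\dim H^{p'}$, and combined with the trivial inclusions $H^p\subset H^2$ for $p>2$ and $H^2\subset H^p$ for $p<2$ on the bounded domain, this forces $\dim H^p=N_d=\dim H^2$ and thus $H^p=H^2={\rm{\sf N}}(D_{{}^\Vert})$. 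Closedness of $D_{{}^\Vert}$ on ${\rm{\sf D}}^p(d)\cap{\rm{\sf D}}^p(\underline{\delta})$ then follows by applying the Hodge decomposition to a sequence $u_k\to u$, $D_{{}^\Vert}u_k\to v$ in $L^p$: boundedness of the projections produces $du_k\to v_d\in{\rm{\sf R}}^p(d)$ and $\underline{\delta}u_k\to v_{\underline{\delta}}\in{\rm{\sf R}}^p(\underline{\delta})$ separately, and closedness of $d$ and $\underline{\delta}$ individually on $L^p$ (Corollary~\ref{cor:complex-interpolation-scale}\eqref{4.2(ii)}) then delivers $u\in{\rm{\sf D}}^p(D_{{}^\Vert})$ with $D_{{}^\Vert}u=v$. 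The corresponding statements for $D_{{}^\bot}$ follow via the Hodge star operator as in Remark~\ref{rem:D||,DT}. The main obstacle throughout is the openness of $\mathcal{I}$; the rest amounts to unwinding the structure already established in Corollary~\ref{cor:complex-interpolation-scale}.
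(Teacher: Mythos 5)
There is a genuine gap in the forward implication $p\in\mathcal{I}\Rightarrow(H_p)$. You define $\mathcal{I}$ solely via boundedness of $P_d={\mathcal{P}}_{{\rm{\sf R}}(d)}$. Since $P_d$ has range $\overline{{\rm{\sf R}}(d)}$ and kernel $\overline{{\rm{\sf R}}(\underline{\delta})}\stackrel{\bot}{\oplus}{\rm{\sf N}}(D_{{}^\Vert})={\rm{\sf N}}^2(\underline{\delta})$ in $L^2$, its bounded $L^p$-extension yields exactly the decomposition
\[
L^p(\Omega,\Lambda)={\rm{\sf R}}^p(d,\Omega)\oplus{\rm{\sf N}}^p(\underline{\delta},\Omega),
\]
which is equation~\eqref{eqn: first} in the paper, and nothing more. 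Your forward step immediately invokes ``the bounded extensions $P_d^p,\,P_{\underline{\delta}}^p,\,P_N^p$'', but the boundedness of $P_{\underline{\delta}}$ and $P_N$ on $L^p$ does \emph{not} follow from that of $P_d$ alone. (Indeed $P_{\underline{\delta}}={\rm I}-P_d-P_N$, so getting $P_{\underline{\delta}}$ bounded from $P_d$ bounded would require $P_N$ bounded on $L^p$; but that needs the harmonic forms to lie in $L^p\cap L^{p'}$, which is precisely part of the conclusion being proved and hence unavailable a priori.) Boundedness of $P_{\underline{\delta}}$ alone is equivalent to the second decomposition~\eqref{eqn: second}, $L^p={\rm{\sf N}}^p(d,\Omega)\oplus{\rm{\sf R}}^p(\underline{\delta},\Omega)$, and the paper makes crucial use of the (nontrivial) equivalence $(H_p)\Leftrightarrow\eqref{eqn: first}\wedge\eqref{eqn: second}$. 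You must therefore either (a) define $\mathcal{I}$ via boundedness of both $P_d$ and $P_{\underline{\delta}}$, or (b) do what the paper does: apply \v Sne\u\i berg separately to the two complemented-subspace scales underlying~\eqref{eqn: first} and~\eqref{eqn: second}, obtaining two open intervals $J$ and $J'$ around $2$, and set $(p_H,p^H):=J\cap J'$. Without this correction, your set $\mathcal{I}$ is only the interval for~\eqref{eqn: first}, which may strictly contain the interval on which $(H_p)$ holds.

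The remainder of your argument is sound and in line with the paper: the \v Sne\u\i berg step (once applied to the right scales), the symmetry under $p\mapsto p'$ via duality, the dimension-counting identification of $H^p$ with ${\rm{\sf N}}(D_{{}^\Vert})$ using ${\mathcal{Z}}_d$ from Corollary~\ref{cor:complex-interpolation-scale}\eqref{4.2(iii)} and the inclusions $H^q\subset H^p$ for $p\le q$ on a bounded domain, and the closedness of $D_{{}^\Vert}$ obtained by projecting a convergent graph sequence onto ${\rm{\sf R}}^p(d)$ and ${\rm{\sf R}}^p(\underline{\delta})$ and using Corollary~\ref{cor:complex-interpolation-scale}\eqref{4.2(ii)}. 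These match the paper's proof. Fix the definition of $\mathcal{I}$ (or keep track of both projections and intersect) and the argument closes up.
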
 

\begin{proof}
Let $p\in(1,\infty)$. The decomposition \eqref{eq:HodgeLpVert1} holds if and only if
\begin{align} 
L^p(\Omega, \Lambda) &= {\rm{\sf R}}^p(d,\Omega)\oplus 
{\rm{\sf N}}^p(\underline{\delta},\Omega) \qquad\text{and} 
\label{eqn: first}\\
L^p(\Omega, \Lambda) &={\rm{\sf N}}^p(d,\Omega)\oplus 
{\rm{\sf R}}^p(\underline{\delta},\Omega) .
\label{eqn: second}
\end{align}
Now each of these decompositions hold for $p=2$, and all of the families 
interpolate with respect to $p$ by the the complex method, so by the properties 
of interpolation together with \v Sne\u\i berg's Theorem \cite{Sn74} (see also 
\cite[Theorem~2.7]{KaMi98}), \eqref{eqn: first} holds if and only if $p$ belongs 
to some open interval $J=(q_\Omega, r_\Omega)$ containing 2, while 
\eqref{eqn: second} holds if and only if $p$ belongs to another open interval, 
which, by duality, is $J'=({r_\Omega}',{{q_\Omega}}')$. Therefore 
\eqref{eq:HodgeLpVert1} holds if and only if $p\in J\cap J'$, i.e. 
$p_H<p<p^H$, where $p_H = \max\{q_\Omega, {r_\Omega}'\}$ and
$p^H = \min\{r_\Omega, {q_\Omega}'\} ={p_H}'$.

Once we have the Hodge decomposition \eqref{eq:HodgeLpVert1}, it is 
straightforward to verify that $D_{{}^\Vert}=d+\underline{\delta}$ is a closed 
operator in $L^p(\Omega,\Lambda)$ (using the closedness of $d$ and 
$\underline{\delta}$ proved in Corollary~\ref{cor:complex-interpolation-scale}$(ii)$),
and that ${\rm{\sf N}}^p(d,\Omega) \cap{\rm{\sf N}}^p(\underline{\delta},\Omega) 
= {\rm{\sf N}}^p(D_{{}^\Vert})$.

Moreover, following the reasoning above, we have that 
$\dim({\rm{\sf N}}^p(D_{{}^\Vert})) =\dim {\mathcal{Z}}_d$ which is independent of 
$p\in(p_H,p^H)$. Now ${\rm{\sf N}}^q(D_{{}^\Vert})\subset 
{\rm{\sf N}}^p(D_{{}^\Vert})$ 
when $p_H<p<q<p^H$, and these nulspaces all have the same dimension, 
so they are all equal to ${\textsf{N}}(D_{{}^\Vert})$.

The results for $D_{{}^\bot} = \delta + \underline{d}$ are proved in a similar 
way, and have the same Hodge exponents by Hodge duality.
\end{proof}

We record the following facts about the closed operator 
$D_{{}^\Vert}=d+\underline{\delta}$ (with ${\rm{\sf D}}^p(D_{{}^\Vert}) 
= {\rm{\sf D}}^p(d)\cap{\rm{\sf D}}^p(\underline{\delta})$) in 
$L^p(\Omega,\Lambda)$. 

\begin{proposition}
\label{prop:density} 
If $p_H<p<q<p^H$, then ${\mathsf R}^p(D_{{}^\Vert}) 
= {\mathsf R}^p(d)\oplus {\mathsf R}^p(\underline{\delta})$ and 
${\mathsf R}^q(D_{{}^\Vert})= 
{\mathsf R}^q(d)\oplus {\rm{\sf R}}^q(\underline{\delta})$ 
is dense in ${\mathsf R}^p(D_{{}^\Vert})$. Moreover ${\mathsf{G}}^q(D_{{}^\Vert})$ 
is dense in ${\mathsf{G}}^p(D_{{}^\Vert})$. 
\end{proposition}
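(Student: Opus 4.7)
The plan is to use the Hodge decomposition from Theorem~\ref{thm:Hodge-dec-Lp} together with the potential operators $R_\Omega$ and $S_\Omega$ from Proposition~\ref{prop:propR,K} to produce a bounded right inverse of $D_{{}^\Vert}$ on its range, and then to check that this inverse is compatible with the inclusion $L^q \hookrightarrow L^p$.

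For the identity ${\mathsf R}^p(D_{{}^\Vert}) = {\mathsf R}^p(d)\oplus {\mathsf R}^p(\underline{\delta})$, the forward inclusion is immediate: if $u \in {\mathsf D}^p(D_{{}^\Vert})$ then $D_{{}^\Vert}u = du + \underline{\delta}u$ with summands in the first two Hodge components of \eqref{eq:HodgeLpVert}. For the reverse inclusion, given $w_1 \in {\mathsf R}^p(d)$, I would apply Proposition~\ref{prop:propR,K} to get $w_1 = d R_\Omega w_1$ and then decompose $R_\Omega w_1 = a + b + c$ according to \eqref{eq:HodgeLpVert}; since $a \in {\mathsf R}^p(d)$ and $c \in {\mathsf N}(D_{{}^\Vert})$ are both annihilated by $d$, I get $db = w_1$, where $b = P_{\underline{\delta}}^p R_\Omega w_1 \in {\mathsf R}^p(\underline{\delta}) \cap {\mathsf D}^p(d)$ and in particular $\underline{\delta}b = 0$. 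An analogous argument with $S_\Omega$ yields $a' = P_d^p S_\Omega w_2 \in {\mathsf R}^p(d) \cap {\mathsf D}^p(\underline{\delta})$ satisfying $\underline{\delta}a' = w_2$. Thus $E(w_1+w_2) := P_{\underline{\delta}}^p R_\Omega w_1 + P_d^p S_\Omega w_2$ defines a bounded right inverse of $D_{{}^\Vert}$ on ${\mathsf R}^p(D_{{}^\Vert})$, factoring through $L^{p^S}(\Omega,\Lambda)$.

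For the density of the range, I would just apply Corollary~\ref{cor:complex-interpolation-scale}\eqref{4.2(vi)} component-by-component: pick $v=v_1+v_2 \in {\mathsf R}^p(D_{{}^\Vert})$, approximate $v_1$ in $L^p$ by $v_1^{(n)} \in {\mathsf R}^q(d)$ and $v_2$ by $v_2^{(n)} \in {\mathsf R}^q(\underline{\delta})$; then $v_1^{(n)}+v_2^{(n)} \in {\mathsf R}^q(D_{{}^\Vert})$ by the first part applied at level $q$, and this sequence converges to $v$ in $L^p$. The crucial step for graph density is the observation that the construction of $E$ is compatible with the $L^p$/$L^q$ scale: since $R_\Omega, S_\Omega$ are defined by explicit kernels independent of $p$, and since by Corollary~\ref{cor:complex-interpolation-scale}\eqref{4.2(v)} the Hodge projections $P_d^p$ and $P_d^q$ agree on $L^q(\Omega,\Lambda) \subset L^p(\Omega,\Lambda)$ (similarly for $P_{\underline{\delta}}$), the operator $E$ sends $L^q$-elements to $L^q$-elements.

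Given $u \in {\mathsf D}^p(D_{{}^\Vert})$ with $v=D_{{}^\Vert}u$, the difference $u - Ev$ lies in ${\mathsf N}^p(D_{{}^\Vert}) = {\mathsf N}(D_{{}^\Vert})$, which is finite-dimensional and contained in $L^\infty(\Omega,\Lambda) \subset L^q(\Omega,\Lambda)$. Taking $v^{(n)} \in {\mathsf R}^q(D_{{}^\Vert})$ with $v^{(n)} \to v$ in $L^p$, set $u^{(n)} := E v^{(n)} + (u - Ev)$. By compatibility, $Ev^{(n)} \in {\mathsf D}^q(D_{{}^\Vert})$, so $u^{(n)} \in {\mathsf D}^q(D_{{}^\Vert})$ with $D_{{}^\Vert} u^{(n)} = v^{(n)} \to v$ in $L^p$, while $u^{(n)} \to Ev + (u-Ev) = u$ in $L^p$ by boundedness of $E$ on $L^p$. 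This proves ${\mathsf G}^q(D_{{}^\Vert})$ is dense in ${\mathsf G}^p(D_{{}^\Vert})$. The main obstacle is the $p$/$q$-compatibility of $E$; once this is extracted from the explicit form of the potentials and Corollary~\ref{cor:complex-interpolation-scale}, the rest is a bookkeeping exercise with the Hodge splitting.
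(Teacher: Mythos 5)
Your argument follows essentially the same strategy as the paper's: the operator $E$ you build is precisely the potential map $Z={\mathcal{P}}_{{\mathsf{R}}(d)}S_\Omega{\mathcal{P}}_{{\mathsf{R}}(\underline{\delta})}+{\mathcal{P}}_{{\mathsf{R}}(\underline{\delta})}R_\Omega{\mathcal{P}}_{{\mathsf{R}}(d)}$ used there, the verification of $D_{{}^\Vert}Ev=v$ on $\mathsf{R}^p(D_{{}^\Vert})$ is identical, and the graph-density step via $u^{(n)}=Ev^{(n)}+(u-ED_{{}^\Vert}u)$ reproduces the paper's $u_k=Zw_k+(u-ZD_{{}^\Vert}u)$.

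One small inaccuracy to flag: you claim $\mathsf{N}(D_{{}^\Vert})\subset L^\infty(\Omega,\Lambda)$. That is not established anywhere in the paper. Corollary~\ref{cor:complex-interpolation-scale}\eqref{4.2(iii)} puts $\mathcal{Z}_d$ and $\mathcal{Z}_\delta$ in $L^\infty$, but $\mathsf{N}(D_{{}^\Vert})$ is a different finite-dimensional complement and is only known to lie in $L^q(\Omega,\Lambda)$ for every $q\in(p_H,p^H)$, by Theorem~\ref{thm:Hodge-dec-Lp}'s identity $\mathsf{N}^q(D_{{}^\Vert})=\mathsf{N}(D_{{}^\Vert})$. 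That weaker inclusion is exactly what you need here (and what the paper invokes), so the argument survives, but the $L^\infty$ assertion should be replaced by the $L^q$ one. A lesser remark: the paper restricts to $q\le p^S$ so that $Z:L^p\to L^q$ and handles larger $q$ by induction, whereas your phrasing only requires $E$ to be bounded $L^q\to L^q$ and $L^p\to L^p$ separately, which holds for all admissible $p<q$; this is a harmless streamlining of the same proof.
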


\begin{proof} 
The density of the ranges follows from 
Corollary~\ref{cor:complex-interpolation-scale}$(vi)$. In proving the density of the 
graphs, we assume that $q\leq p^S$. Otherwise we proceed by induction.
Let us introduce the  potential map 
$Z:L^p(\Omega,\Lambda) \to L^q(\Omega,\Lambda)$ defined by 
$$
Zv= {\mathcal{P}}_{{\mathsf{R}}(d)}S_\Omega 
{\mathcal{P}}_{{\mathsf{R}}(\underline{\delta})}v + 
{\mathcal{P}}_{{\mathsf{R}}(\underline{\delta})}
R_\Omega{\mathcal{P}}_{{\mathsf{R}}(d)}v
$$
where $R_\Omega$ and $S_\Omega$ have the properties stated in 
Proposition~\ref{prop:propR,K}. This is a potential map in the sense that, for 
all $v\in {\mathsf{R}}^p(D_{{}^\Vert})$,
\begin{align*}
D_{{}^\Vert} Zv &= \underline{\delta} {\mathcal{P}}_{{\mathsf{R}}(d)} S_\Omega 
{\mathcal{P}}_{{\mathsf{R}}(\underline{\delta})}v
+d{\mathcal{P}}_{{\mathsf{R}}(\underline{\delta})} R_\Omega 
{\mathcal{P}}_{{\mathsf{R}}(d)}v\\
&=\underline{\delta} S_\Omega {\mathcal{P}}_{{\mathsf{R}}(\underline{\delta})}v
+d R_\Omega {\mathcal{P}}_{{\mathsf{R}}(d)}v 
={\mathcal{P}}_{{\mathsf{R}}(\underline{\delta})}v+ {\mathcal{P}}_{{\mathsf{R}}(d)}v
=v\,.
\end{align*} 
Let $(u, D_{{}^\Vert}u)\in {\mathsf{G}}^p(D_{{}^\Vert})$. By density of the ranges, 
there exists a sequence $(w_k)_{k\in{\mathbb{N}}}$ in 
${\mathsf{R}}^q(D_{{}^\Vert})$ such that $w_k\xrightarrow[k\to\infty]{} D_{{}^\Vert}u$
in $L^p$. Let $u_k = Zw_k + (u-ZD_{{}^\Vert}u)$. Then $(u_k)_{k\in{\mathbb{N}}}$ 
is a sequence in $L^q(\Omega,\Lambda)$ (because $Zw_k \in L^q$ and 
$u-ZD_{{}^\Vert}u\in {\mathsf{N}}(D_{{}^\Vert}) \in L^q$) and 
$D_{{}^\Vert}u_k=w_k$, so $(u_k, D_{{}^\Vert}u_k) = 
(u_k,w_k) \in {\mathsf{G}}^q(D_{{}^\Vert})$.
Also $u_k-u =  Z(w_k-D_{{}^\Vert}u) \xrightarrow[n\to\infty]{} 0$, so that 
$(u_k,D_{{}^\Vert}u_k) \xrightarrow[n\to\infty]{} (u,D_{{}^\Vert}u)$ in 
$L^p\oplus L^p$. We conclude that ${\mathsf{G}}^q(D_{{}^\Vert})$ is dense in 
${\mathsf{G}}^p(D_{{}^\Vert})$.
\end{proof}

\begin{remark}
\label{rem:hodgerange}
If $\Omega\subset{\mathbb{R}}^n$ is smooth, it is known that $p_H=1$ and 
$p^H=\infty$ (see \cite[Theorems~2.4.2 and~2.4.14]{Schw95}). We will see 
in Section~\ref{sec:pH} that if $\Omega\subset{\mathbb{R}}^n$ is a bounded 
strongly Lipschitz domain, then $p_H<\frac{2n}{n+1}$ and $p^H>\frac{2n}{n-1}$.  
\end{remark}

\section{Hodge-Dirac operators on very weakly Lipschitz domains}
\label{sec:Dirac}

On any $\Omega\subset{\mathbb{R}}^n$, the Hodge-Dirac operator 
$D_{{}^\Vert}=d+\underline{\delta}$ with domain 
${\rm{\sf D}}(D_{{}^\Vert})={\rm{\sf D}}(d,\Omega)
\cap {\rm{\sf D}}(\underline{\delta},\Omega)$, as defined
in Definition~\ref{def:HodgeDirac}, is self-adjoint in $L^2(\Omega,\Lambda)$.
Therefore, by Remark~\ref{self-adj-fun-calc}, $D_{{}^\Vert}$ is bisectorial of angle 
$\omega=0$ in $L^2(\Omega,\Lambda)$, and,  
for all $\mu\in\bigl(0,\frac{\pi}{2}\bigr)$, $D_{{}^\Vert}$ admits a bounded 
$S_\mu^\circ$ holomorphic functional calculus in $L^2(\Omega,\Lambda)$. 

Our aim in this section is to extend this result to a range of values of $p$ under 
the condition that $\Omega$ satisfies condition \eqref{eq:Omega}. 

In the case of a strongly Lipschitz domain, it has been proved in 
\cite[Theorem~7.1]{MM09a} that the semigroup generated by the Hodge-Laplacian 
$\Delta_{{}^\Vert}=-{D_{{}^\Vert}}^2$ in $L^2(\Omega,\Lambda)$ extends to an
analytic semigroup in $L^p(\Omega,\Lambda)$ if $p_H<p<p^H$. Moreover,
the Riesz transforms $\frac{d}{\sqrt{-\Delta_{{}^\Vert}}}$ and 
$\frac{\underline{\delta}}{\sqrt{-\Delta_{{}^\Vert}}}$ are bounded in 
$L^p(\Omega,\Lambda)$ for $p_H<p<p^H$ as proved in 
\cite[Theorem~1.1]{HMM11}. 

Recall that the results presented here for $D_{{}^\Vert}$ are equally valid for
$D_{{}^\bot}$ (see Remark~\ref{rem:D||,DT}).

\begin{theorem}
\label{thm:mainThm4.1} 
Suppose $\Omega$ is a very weakly Lipschitz domain, $1<p<\infty$, and 
$D_{{}^\Vert} =d+\underline{\delta}$ is the Hodge-Dirac operator in 
$L^p(\Omega,\Lambda)$ with domain ${\rm{\sf D}}^p(D_{{}^\Vert})
={\rm{\sf D}}^p(d,\Omega)\cap {\rm{\sf D}}^p(\underline{\delta},\Omega)$.
\begin{enumerate}[(i)]
\item 
\label{5.1(i)}
If $p_H<p<p^H$, then the operator $D_{{}^\Vert} =d+\underline{\delta}$ 
is bisectorial of angle $\omega=0$ in $L^p(\Omega,\Lambda)$, and for all 
$\mu\in\bigl(0,\frac{\pi}{2}\bigr)$, $D_{{}^\Vert}$ admits a bounded 
$S_\mu^\circ$ holomorphic functional calculus in $L^p(\Omega,\Lambda)$.
\item
\label{5.1(ii)}
Conversely, if, for some $p \in (1,\infty)$, the operator $D_{{}^\Vert}$ 
 is bisectorial with a bounded holomorphic functional calculus in 
$L^p(\Omega,\Lambda)$, then $p_H<p<p^H$.
\item
\label{5.1(iii)}
Moreover, for all $r\in\bigl(\max\{1,{p_H}_S\},p^H\bigr)$ (recall that 
$p_S:=\frac{np}{n+p}$) and all $\theta\in\bigl(0,\frac{\pi}{2}\bigr)$, 
there exists $C_{r,\theta}>0$ such that
\begin{equation}
\label{eq:resRDVert}
({\rm I}\,+z D_{{}^\Vert})^{-1}:
\begin{cases}
{\rm{\sf R}}^r(d,\Omega)\\
{\rm{\sf R}}^r(\underline{\delta},\Omega)
\end{cases}
\longrightarrow L^r(\Omega,\Lambda)
\qquad\forall z\in{\mathbb{C}}\setminus S_\theta,
\end{equation}
with the estimates
\begin{equation}
\label{eq:est_resRVert}
\sup_{z\in{\mathbb{C}}\setminus S_\theta}\|({\rm I}\,+z D_{{}^\Vert})^{-1}u\|_r
\le C_{r,\theta}\|u\|_r
\quad\forall\, u\in {\rm{\sf R}}^r(d,\Omega)\ \mbox{ and }\ 
\forall\, u\in {\rm{\sf R}}^r(\underline{\delta},\Omega).
\end{equation}
For all $\mu\in\bigl(0,\frac{\pi}{2}\bigr)$, there exists a constant $K_{r,\mu}$ such
that for all $f\in\Psi(S_\mu^\circ)$,
\begin{equation}
\label{eq:fcRDVert}
f(D_{{}^\Vert}):\begin{cases}
{\rm{\sf R}}^r(d,\Omega)\\
{\rm{\sf R}}^r(\underline{\delta},\Omega)
\end{cases}
\longrightarrow L^r(\Omega,\Lambda),
\end{equation}
with the estimates
\begin{equation}
\label{eq:est_fcRVert} 
\|f(D_{{}^\Vert})u\|_r \le K_{r,\mu}\|f\|_{L^\infty(S_\mu^\circ)}\|u\|_r,
\quad\forall\, u\in {\rm{\sf R}}^r(d,\Omega)\ \mbox{ and }\ 
\forall\, u\in {\rm{\sf R}}^r(\underline{\delta},\Omega).
\end{equation}
\end{enumerate}
\end{theorem}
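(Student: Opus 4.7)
The starting point is the Hodge decomposition $L^p(\Omega,\Lambda)={\rm{\sf R}}^p(d)\oplus{\rm{\sf R}}^p(\underline{\delta})\oplus{\rm{\sf N}}(D_{{}^\Vert})$ from Theorem~\ref{thm:Hodge-dec-Lp}. On the three summands $D_{{}^\Vert}$ acts as $\underline\delta$, $d$ and $0$, and it swaps ${\rm{\sf R}}^p(d)\leftrightarrow{\rm{\sf R}}^p(\underline\delta)$. Componentwise, the resolvent equation $({\rm I}+izD_{{}^\Vert})u=v$ becomes $u_0=v_0$ together with the $2\times 2$ block system
\begin{equation*}
({\rm I}+z^2\, d\underline\delta)u_+=v_+-iz\,dv_-,\qquad
({\rm I}+z^2\, \underline\delta d)u_-=v_--iz\,\underline\delta v_+,
\end{equation*}
so bisectoriality of $D_{{}^\Vert}$ on $L^p$ is equivalent to sectoriality of angle zero of the two Hodge--Laplacian blocks on the range subspaces. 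At $p=2$ this is immediate from self-adjointness (Remark~\ref{self-adj-fun-calc}); the full Hodge interval $(p_H,p^H)$ is covered by applying \v Sne\u\i berg's theorem to the complex interpolation scale of ranges from Corollary~\ref{cor:complex-interpolation-scale}\,\eqref{4.2(iv)}, exactly as in the proof of Theorem~\ref{thm:Hodge-dec-Lp}. The two-sided breakdown of these resolvent estimates is in exact correspondence with the breakdown of the Hodge decomposition, which will yield part~(ii).

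\textbf{BFC as the main obstacle.} Upgrading bisectoriality to a bounded $S_\mu^\circ$ holomorphic functional calculus in $L^p$ is the delicate step and, in my view, the main obstacle. At $p=2$ the BFC, with constant $K_\theta=1$, is automatic from self-adjointness. To transfer it to $p\in(p_H,p^H)$ I would use the potential operators $R_\Omega$, $S_\Omega$ of Proposition~\ref{prop:propR,K} together with the bilipschitz atlas $\Omega=\bigcup_{j=1}^M\Omega_j$ from~\eqref{eq:Omega} to extract Gaffney-type $L^2$ off-diagonal bounds for $({\rm I}+itD_{{}^\Vert})^{-1}$ (this is where the first-order nature of $D_{{}^\Vert}$ is crucial) and to verify the $L^2$ quadratic estimates afforded by spectral theory. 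Combined with the $L^p$ resolvent bounds of the previous step, these feed into the now-standard Calder\'on--Zygmund / tent-space machinery for first-order bisectorial operators, in the spirit of \cite{AKMcI06Invent}, to produce $\|f(D_{{}^\Vert})\|_{{\mathscr L}(L^p)}\le K_\mu\|f\|_{L^\infty(S_\mu^\circ)}$ for every $f\in\Psi(S_\mu^\circ)$ and every $\mu\in(0,\tfrac{\pi}{2})$.

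\textbf{Converse (ii).} If $D_{{}^\Vert}$ has a BFC in $L^p$, then applying the extended calculus to the indicators of the components $S_{\mu+}^\circ$, $S_{\mu-}^\circ$ and $\{0\}$ of the spectrum produces bounded projections on $L^p$ onto ${\rm{\sf R}}^p(d)$, ${\rm{\sf R}}^p(\underline\delta)$ and ${\rm{\sf N}}(D_{{}^\Vert})$ respectively; these realise the decomposition~\eqref{eq:HodgeLpVert1} in $L^p$, and Theorem~\ref{thm:Hodge-dec-Lp} then forces $p\in(p_H,p^H)$.

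\textbf{Extension (iii) via Sobolev lifting.} When $r\in(\max\{1,(p_H)_S\},p^H)$ one has $r^S>p_H$, so the conclusions of part~(i) apply at the level $L^{r^S}$. The crucial input is the Sobolev-smoothing property $R_\Omega:L^r\to L^{r^S}\cap{\rm{\sf D}}^r(d,\Omega)$ of Proposition~\ref{prop:propR,K} (and $S_\Omega$ symmetrically): every $u\in{\rm{\sf R}}^r(d)$ is lifted to $f=R_\Omega u\in L^{r^S}$ with $u=df$ and $\|f\|_{r^S}\lesssim\|u\|_r$. The intertwining $d\,({\rm I}+z^2\underline\delta d)^{-1}=({\rm I}+z^2\, d\underline\delta)^{-1}d$, and more generally $d\,g(D_{{}^\Vert}^2)=g(D_{{}^\Vert}^2)\,d$ for $g\in\Psi(S_{\mu+}^\circ)$ (obtained by commuting $d$ under the Dunford contour integral), allow one to express $({\rm I}+zD_{{}^\Vert})^{-1}u$ and, more generally, $f(D_{{}^\Vert})u$ as $d$ applied to operators acting on $f$ that are bounded on $L^{r^S}$ by part~(i); combining with the norm control $\|d\,R_\Omega u\|_r=\|u\|_r$ from Proposition~\ref{prop:propR,K} closes the argument and yields~\eqref{eq:est_resRVert} and~\eqref{eq:est_fcRVert} on ${\rm{\sf R}}^r(d)$. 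The symmetric argument using $S_\Omega$ gives the bounds on ${\rm{\sf R}}^r(\underline\delta)$.
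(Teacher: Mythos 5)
Your proposal identifies most of the right ingredients (Hodge decomposition, potential operators $R_\Omega,S_\Omega$, off-diagonal bounds) but the logical flow and several steps have genuine gaps; the paper in fact proves (iii) first and then derives (i) from it, the opposite of your order, and that order is not incidental.

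For (i), invoking \v Sne\u\i berg's theorem to get uniform-in-$z$ resolvent bounds does not work as stated: the operator $({\rm I}+izD_{{}^\Vert})$ has $L^p$ operator norm growing like $|z|$, so the interval of invertibility produced by \v Sne\u\i berg's theorem for each fixed $z$ may shrink as $|z|\to\infty$, and no uniformity follows. The reduction to sectoriality of the Laplacian blocks is also not obviously helpful since the right-hand sides $iz\,dv_-$, $iz\,\underline\delta v_+$ involve unbounded operators, and controlling them is precisely the Riesz-transform content you are trying to establish. The paper instead proves the resolvent and functional-calculus bounds on the ranges ${\rm{\sf R}}^r(d)$, ${\rm{\sf R}}^r(\underline\delta)$ via a Calder\'on--Zygmund decomposition at scale $t\sim|z|$ supplied by the potentials (Propositions~\ref{prop:OD} and~\ref{prop:hyp-abstractThm}, Theorems~\ref{thm:abstractThm} and~\ref{thm:abstractThm2}), and then (i) is a one-line consequence of (iii) together with the Hodge decomposition.

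For (iii), your argument has two gaps. First, you only verify $r^S>p_H$, not $r^S<p^H$, and in general $({p_H})^S$ can exceed $p^H$ (this inequality is equivalent to $p_H\le 2n/(n+1)$, proved for strongly Lipschitz domains in Theorem~\ref{thm:pHlip} but not available for merely very weakly Lipschitz domains); so for $r$ near $p_H$ the single Sobolev lift overshoots $p^H$. This is why the paper iterates $a\sim n/2$ times. Second and more seriously, the intertwining $dg(D_{{}^\Vert}^2)=g(D_{{}^\Vert}^2)d$ lets you write $({\rm I}+zD_{{}^\Vert})^{-1}u=d({\rm I}-z^2D_{{}^\Vert}^2)^{-1}f-z\underline\delta d({\rm I}-z^2D_{{}^\Vert}^2)^{-1}f$ with $f=R_\Omega u\in L^{r^S}$, but then you must bound $\|d({\rm I}-z^2D_{{}^\Vert}^2)^{-1}f\|_r$; the identity $\|dR_\Omega v\|_r=\|v\|_r$ applies only when a potential is applied first and cannot extract an $L^r$ norm of $d$ of an $L^{r^S}$ function. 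That step hides exactly the Riesz-transform bound. The paper avoids this by localising the potential decomposition to cubes of side $t\sim|z|$: the factor $t^{1-n(1/p-1/q)}$ from $\|R_\Omega(\eta_k u)\|_{L^{q^S}(Q_k^t)}$ is what makes the $L^q$ off-diagonal bounds and Schur's lemma close the estimate in $L^p$. A global Sobolev lift does not produce this scale-dependent gain.

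For (ii), the claim that $\chi_{S_{\mu\pm}^\circ}(D_{{}^\Vert})$ are the projections onto ${\rm{\sf R}}^p(d)$ and ${\rm{\sf R}}^p(\underline\delta)$ is incorrect: those are the spectral projections onto the positive and negative parts of the self-adjoint operator $D_{{}^\Vert}$, which are not the Hodge subspaces even at $p=2$ (indeed $D_{{}^\Vert}$ swaps ${\rm{\sf R}}(d)$ and ${\rm{\sf R}}(\underline\delta)$). The paper instead uses the bounded $H^\infty$ calculus on $\overline{{\rm{\sf R}}^p(D_{{}^\Vert})}$ to establish the equivalence $\|D_{{}^\Vert}u\|_p\approx\|\sqrt{D_{{}^\Vert}^2}u\|_p$ together with the form-degree argument to get $\|D_{{}^\Vert}u\|_p\approx\|du\|_p+\|\underline\delta u\|_p$, and then exhibits the Hodge decomposition of $u$ via the Riesz transforms $\underline\delta/D_{{}^\Vert}^2$ and $d/D_{{}^\Vert}^2$; the spectral projections play no role.
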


\noindent
The proof of this result is iterative.
In the iteration arguments, we will apply the following two intermediate results. 
The heart of the extrapolation method is deferred to Section \ref{sec:general}. 
  
\begin{proposition}
\label{prop:OD} 
Suppose $\Omega$ is a very weakly Lipschitz domain, that $p_H<q<p^H$, 
and that $D_{{}^\Vert}$  is bisectorial of angle 
$\omega\ge0$ in $L^q(\Omega,\Lambda)$.  Suppose 
$\omega<\mu<\frac{\pi}{2}$ and  $\max\{1,q_S\}<p<q$.
\begin{enumerate}[(i)]
\item
\label{5.2(i)}
The family of resolvents
\begin{equation}
\label{eq:OD1}
\bigl\{({\rm I}\,+zD_{{}^\Vert})^{-1}\,;\,z\in{\mathbb{C}}\setminus S_\mu\bigr\}
\end{equation}
admits off-diagonal bounds $L^q-L^q$ as defined in Definition~\ref{def:OD}.
Moreover the following families of operators
\begin{equation}
\label{OD2}
\bigl\{zd({\rm I}\,+zD_{{}^\Vert})^{-1}\,;\, z\in{\mathbb{C}}\setminus S_\mu\bigr\}
\quad\mbox{and}\quad 
\bigl\{z\underline{\delta}({\rm I}\,+zD_{{}^\Vert})^{-1}\,;\, 
z\in{\mathbb{C}}\setminus S_\mu\bigr\}
\end{equation}
also admit off-diagonal bounds $L^q-L^q$, as (by Remark~\ref{rem:OD}) do  
the families of adjoints,
\begin{equation}
\label{OD3}
\bigl\{z\overline{({\rm I}\,+zD_{{}^\Vert})^{-1}\underline{\delta}}\,;\, 
z\in{\mathbb{C}}\setminus S_\mu\bigr\}
\quad\mbox{and}\quad 
\bigl\{z\overline{({\rm I}\,+zD_{{}^\Vert})^{-1}d} \,;\, 
z\in{\mathbb{C}}\setminus S_\mu\bigr\}.
\end{equation}
\item
\label{5.2(ii)}
Condition \eqref{condA} of Theorem~\ref{thm:abstractThm} holds in each of 
the following cases:
\begin{enumerate}[(a)]
\item
The operators $A=D_{{}^\Vert}$, $B=d$ and the subspace
$X_p={\rm{\sf R}}^p(d,\Omega)$, 
\item
the operators $A=D_{{}^\Vert}$, $B=\underline{\delta}$ and the subspace
$X_p={\rm{\sf R}}^p(\underline{\delta},\Omega)$.
\end{enumerate}
\item
\label{5.2(iii)}
There exist  constants $M_{p,\mu}$ such that 
\begin{align*}
\bigl\|({\rm I}\,+zD_{{}^\Vert})^{-1}u\bigr\|_p\le M_{p,\mu} \|u\|_p,\quad
\forall\,z\in{\mathbb{C}}\setminus S_{\mu},\ &
\forall\, u\in {\rm{\sf R}}^p(d,\Omega)\cap L^q(\Omega,\Lambda) 
= {\rm{\sf R}}^q(d,\Omega)\\
\mbox{and}\quad&\forall\, u\in {\rm{\sf R}}^p(\underline{\delta},\Omega)
\cap L^q(\Omega,\Lambda) = {\rm{\sf R}}^q(\underline{\delta},\Omega).
\end{align*}
\item 
\label{5.2(iv)}
If in addition $p>p_H$, then $D_{{}^\Vert}$  is bisectorial of angle 
$\omega$ in $L^p(\Omega,\Lambda)$.
\end{enumerate}
\end{proposition}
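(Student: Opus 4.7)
The plan is to prove the four assertions in order. Part~\eqref{5.2(i)} will rest on a Gaffney--Davies cutoff argument using the product rules \eqref{eq:product-rule}; parts~\eqref{5.2(ii)} and~\eqref{5.2(iii)} will combine these off-diagonal bounds with the mapping properties of the potential operators of Proposition~\ref{prop:propR,K} to feed the abstract extrapolation result Theorem~\ref{thm:abstractThm}; and \eqref{5.2(iv)} will follow by glueing the estimates on the three Hodge summands via Theorem~\ref{thm:Hodge-dec-Lp}.

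For~\eqref{5.2(i)}, I would fix Borel sets $E,F$ and a Lipschitz cutoff $\eta$ with $\eta\equiv 0$ in a neighbourhood of $F$, $\eta\equiv 1$ on $E$, and $\|\nabla\eta\|_\infty\lesssim 1/{\rm dist}(E,F)$. Taking $u$ supported in $F$ and $v=({\rm I}+zD_{{}^\Vert})^{-1}u$, the product rules \eqref{eq:product-rule} give $D_{{}^\Vert}(\eta v)=\eta D_{{}^\Vert}v+\nabla\eta\wedge v-\nabla\eta\lrcorner\, v$, so that $({\rm I}+zD_{{}^\Vert})(\eta v)=z(\nabla\eta\wedge\cdot-\nabla\eta\lrcorner\,\cdot)v$ because $\eta u=0$. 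The $L^q$-bisectoriality hypothesis then yields the polynomial bound $\|\eta v\|_q\le M_\mu|z|\|\nabla\eta\|_\infty\|v\|_q$; iterating with a dyadic sequence of cutoffs (or equivalently, replacing $\eta$ by $e^{\alpha\xi}$ for a small $\alpha\sim{\rm dist}(E,F)/|z|$) would upgrade this to the exponential off-diagonal bound of Definition~\ref{def:OD}. The families in~\eqref{OD2} should be treated analogously, by applying $d$ or $\underline{\delta}$ to the representation of $\eta v$ obtained above and using~\eqref{eq:product-rule} to absorb the resulting commutators, and the adjoint estimates~\eqref{OD3} then follow from Remark~\ref{rem:OD}.

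For~\eqref{5.2(ii)}, condition~\eqref{condA} of Theorem~\ref{thm:abstractThm} requires off-diagonal decay of the resolvent families together with a true right-inverse of $B$ on its range which gains Sobolev regularity from $L^p$ into $L^q$. The off-diagonal inputs are provided by~\eqref{5.2(i)}, while Proposition~\ref{prop:propR,K} supplies $R_\Omega$ (respectively $S_\Omega$) as a true potential for $d$ (respectively $\underline{\delta}$) with Sobolev gain $L^p\to L^{p^S}$; the assumption $p>q_S$ is equivalent to $p^S\ge q$, so these potentials actually take $L^p$ into $L^q$, giving the compatibility required by~\eqref{condA}. Part~\eqref{5.2(iii)} would then be a direct application of Theorem~\ref{thm:abstractThm}, converting the $L^q$-bounded holomorphic functional calculus on ${\rm{\sf R}}^q(d,\Omega)$ and ${\rm{\sf R}}^q(\underline{\delta},\Omega)$ into $L^p$-bounds on the same (dense, by Corollary~\ref{cor:complex-interpolation-scale}\eqref{4.2(vi)}) subspaces via the Cauchy integral representation of Remark~\ref{rem:fcPsi}. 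Finally, for~\eqref{5.2(iv)} I would combine~\eqref{5.2(iii)} with the $L^p$ Hodge decomposition of Theorem~\ref{thm:Hodge-dec-Lp}: any $u\in L^p(\Omega,\Lambda)$ splits as $u_1+u_2+u_3$ with $u_i$ in the three Hodge summands, and $({\rm I}+zD_{{}^\Vert})^{-1}$ is uniformly bounded on each---on the range pieces via~\eqref{5.2(iii)} (density being supplied by Proposition~\ref{prop:density}), and on the finite-dimensional nulspace trivially, since $D_{{}^\Vert}$ acts there as $0$.

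The main obstacle will be the verification of~\eqref{condA} in step~\eqref{5.2(ii)}: the precise quantitative coupling between the Sobolev gain of the potentials and the exponential off-diagonal decay, tied to the exponent relation $p>q_S\Leftrightarrow p^S\ge q$, is what actually powers the extrapolation from $L^q$ down to $L^p$. Once~\eqref{condA} is in place, \eqref{5.2(iii)} and~\eqref{5.2(iv)} follow in a formal manner.
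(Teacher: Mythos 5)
Your sketches of parts~(i), (iii) and~(iv) track the paper's proof: the exponential cutoff $\eta=e^{\alpha\xi}$ with $\alpha\sim{\rm dist}(E,F)/|z|$ gives the Gaffney--Davies bound for (i); Theorem~\ref{thm:abstractThm} plus the density of Corollary~\ref{cor:complex-interpolation-scale}\eqref{4.2(vi)} gives (iii); and the $L^p$ Hodge decomposition combined with Proposition~\ref{prop:density} gives (iv). One small correction in~(iii): Theorem~\ref{thm:abstractThm} is a \emph{resolvent}-extrapolation statement and needs only bisectoriality as input (the transfer of a bounded holomorphic functional calculus is the separate Theorem~\ref{thm:abstractThm2}); since (iii) asserts only resolvent estimates, this is what you use.

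The genuine gap is in~(ii), and you essentially flag it yourself. Condition~\eqref{condA} is not a statement about a global potential with Sobolev gain; it demands, for every scale $t\in(0,{\rm diam}\,\Omega]$, a covering $\{Q_k^t\}$ of $\Omega$ by sets of diameter $\sim t$ and a decomposition $u=\sum_k(Bw_k+\tfrac{1}{t}v_k)$ with ${\rm sppt}\,w_k,\,{\rm sppt}\,v_k\subset Q_k^t$ and $\|w_k\|_q,\|v_k\|_q\lesssim t^{1-n(1/p-1/q)}\|1\!{\rm l}_{Q_k^t}u\|_p$. The global operator $R_\Omega$ produces nothing localized, and saying ``the potential lands in $L^q$ when $p^S\ge q$'' loses the power of $t$, which is precisely what makes Theorem~\ref{thm:abstractThm} close. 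What is needed is: choose a smooth partition of unity $\eta_k$ subordinate to $\{Q_k^t\}$ with $\sum_k\eta_k^2=1$ and $\|\nabla\eta_k\|_\infty\lesssim 1/t$, and set $w_k=\eta_kR_\Omega(\eta_k u)$ and $v_k=\eta_kR_\Omega(t\nabla\eta_k\wedge u)-t\nabla\eta_k\wedge R_\Omega(\eta_ku)+t\eta_kK_\Omega(\eta_ku)$. Then $\eta_k^2u=dw_k+\tfrac{1}{t}v_k$ by Proposition~\ref{prop:propR,K} and the product rules \eqref{eq:product-rule}, \eqref{eq:product-rule3}. The crucial quantitative step is the H\"older factorization $\|\eta_kR_\Omega(\eta_ku)\|_q\le\|\eta_k\|_r\|R_\Omega(\eta_ku)\|_{p^S}$ with $\tfrac1q=\tfrac1{p^S}+\tfrac1r$: since $\|\eta_k\|_r\lesssim|Q_k^t|^{1/r}\sim t^{n/r}$, this produces exactly the factor $t^{1-n(1/p-1/q)}$ demanded by~\eqref{condA}. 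Without this localized construction and the resulting $t$-dependence, your argument for~(ii) does not go through.
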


\begin{proof}
\eqref{5.2(i)}
The methods used in this proof are inspired 
by those developed for the proof of \cite[Lemma~2.1]{AHLMcIT02}; 
see also \cite[Proposition~5.1]{AAMcI10}.

We start with the proof of off-diagonal bounds for the families \eqref{eq:OD1}.
Let $\mu\in\bigl(\omega,\frac{\pi}{2}\bigr)$ and $E,F\subset{\mathbb{R}}^n$ 
be Borel sets. Let $z\in {\mathbb{C}}\setminus S_\mu$ and $t:=|z|>0$. 
If ${\rm dist}\,(E,F)=0$, the result is immediate since the resolvent
is bounded in $L^q(\Omega,\Lambda)$ by assumption, so suppose 
that ${\rm dist}\,(E,F)>0$.
Let $M_{q,\mu}:=\sup_{z\in {\mathbb{C}}\setminus S_\mu}
\|({\rm I}\,+zD_{{}^\Vert})^{-1}\|_{{\mathscr{L}}(L^q(\Omega,\Lambda))}$. 
Let $\xi$ be a real-valued function satisfying
\begin{equation}
\label{OD4}
\xi\in{\rm Lip}({\mathbb{R}}^n)\cap L^\infty({\mathbb{R}}^n),\
\xi=1\mbox{ on }E,\ \xi=0\mbox{ on }F\mbox{ and } 
\|\nabla\xi\|_\infty\le\frac{1}{{\rm dist}\,(E,F)} ,
\end{equation}
for example taking $\xi(x) = \min\{\frac{{\rm dist}(x,F)}{{\rm dist}(E,F)},1\}$.
Let $\alpha>0$ (which will be determined later) 
and let $\eta:=e^{\alpha\xi}$. Note that $\nabla\eta=\alpha\eta\nabla\xi$.

For each $u\in L^q(\Omega,\Lambda)$, set  
$v:=({\rm I}\,+zD_{{}^\Vert})^{-1}(1\!{\rm l}_Fu)
=({\rm I}\,+zD_{{}^\Vert})^{-1}(\eta\, 1\!{\rm l}_Fu)
\in{\rm{\sf D}}^q(D_{{}^\Vert},\Omega)$ 
(since $\eta=1$ on $F$), noting that $\eta v \in {sf D}^q(D_{{}^\Vert},\Omega)$ 
(see Remark~\ref{product-rule}).
Hence we have the following commutator identity 
\begin{align*} 
\eta v &=v +\bigl[\eta,({\rm I}\,+zD_{{}^\Vert})^{-1}\bigr](1\!{\rm l}_Fu)
\\
&=v -({\rm I}\,+zD_{{}^\Vert})^{-1}\bigl[\eta,zD_{{}^\Vert}\bigr]
({\rm I}\,+zD_{{}^\Vert})^{-1}(1\!{\rm l}_Fu)
\\
&=v+ z({\rm I}\,+zD_{{}^\Vert})^{-1} 
\bigl(\nabla\eta\wedge v-\nabla\eta\lrcorner\,v\bigl) 
\qquad\qquad\text{by \eqref{eq:product-rule}}
\\
&=v+ z({\rm I}\,+zD_{{}^\Vert})^{-1}\alpha
\bigl(\nabla\xi\wedge(\eta v)-\nabla\xi\lrcorner\,(\eta v)\bigl).  
\end{align*} 
Since $\|v\|_q\le M_{q,\mu}\|u\|_q$, we have the estimate
\begin{equation}
\label{OD5}
\|\eta v\|_q\le M_{q,\mu}\|u\|_q
+2\alpha \,t M_{q,\mu} \frac{1}{{\rm dist}\,(E,F)}\,\|\eta v\|_q.
\end{equation}
We now choose  $\alpha=\frac{{\rm dist}\,(E,F)}{4t M_{q,\mu}}$
and since $\eta=e^\alpha$ on $E$, \eqref{OD5} implies
\begin{equation}
\label{OD5b}
e^\alpha\|1\!{\rm l}_Ev\|_q\le \|\eta v\|_q \leq2M_{q,\mu}\|u\|_q.
\end{equation}
Therefore, we have proved off-diagonal bounds (as in Definition~\ref{def:OD})
for $\bigl\{({\rm I}\,+zD_{{}^\Vert})^{-1},z\in{\mathbb{C}}\setminus S_\mu\bigr\}$
with $C=2M_{q,\mu}$ and $c=\frac{1}{4 M_{q,\mu}}$.

We turn now to the proof of off-diagonal bounds for the first family in
\eqref{OD2}, and use the same notation as above for 
$q, t, \xi, \eta, M_{q,\mu}, u, v$,
first noting that
\begin{equation}
\label{OD6}
\|zD_{{}^\Vert}({\rm I}\,+zD_{{}^\Vert})^{-1}u\|_q \le (1+M_{q,\mu})\|u\|_q, 
\quad\forall z\in{\mathbb{C}}\setminus S_\mu.
\end{equation}
Since $q\in(p_H,p^H)$, the Hodge projection 
$P_{{\rm{\sf R}}^q(d,\Omega)}:L^q(\Omega,\Lambda)\to
{\rm{\sf R}}^q(d,\Omega)$ is bounded on $L^q(\Omega,\Lambda)$;
we denote by $M_q$ its norm. It is straightforward 
that $P_{{\rm{\sf R}}^q(d,\Omega)}D_{{}^\Vert}v=dv$ for all 
$v\in{\rm{\sf D}}^q(D_{{}^\Vert},\Omega)$. 
From \eqref{OD6} follows the estimate
\begin{align*}
\|zd({\rm I}\,+zD_{{}^\Vert})^{-1}u\|_q &\le M_q(1+M_{q,\mu})\|u\|_q \quad\forall
z\in{\mathbb{C}}\setminus S_\mu\ , \quad\text{and therefore}
\\
\|zdv\|_q &\le M_q(1+M_{q,\mu})\|u\|_q \quad\forall
z\in{\mathbb{C}}\setminus S_\mu\ . 
\end{align*}
Further
\begin{align*}
\eta z dv-zd v
&=\eta z dv-z d (\eta v)+z d(\eta v)-zd v
\\
&=-\alpha z\nabla\xi\wedge(\eta v)+ zd(\eta v-v)
\\
&=\alpha z\Bigl(-\nabla\xi\wedge(\eta v)+
zd({\rm I}\,+zD_{{}^\Vert})^{-1}
\bigl(\nabla\xi\wedge (\eta v)-\nabla\xi\lrcorner\,(\eta v)\bigr)\Bigr).
\end{align*}
This gives the estimate 
$$
\|\eta z dv\|_q\le
M_q(1+M_{q,\mu})\|u\|_q
+\tfrac{\alpha\, t}{{\rm dist}\,(E,F)}\,
\bigl(1+2M_q(1+M_{q,\mu})\bigr)\|\eta v\|_q.
$$
Choosing $\alpha=\frac{{\rm dist}\,(E,F)}{4 t M_{q,\mu}}$, and using 
the bound proved in \eqref{OD5b} for $\|\eta v\|_q$, we obtain
$$
\|\eta z dv\|_q\le 
\bigl(1/2+2M_q(1+M_{q,\mu})\bigr)\|u\|_q
$$
and  conclude as before that $\bigl\{zd({\rm I}\,+zD_{{}^\Vert})^{-1},
z\in{\mathbb{C}}\setminus S_\mu\bigr\}$ satisfies off-diagonal bounds
with $C=1/2+2M_q(1+M_{q,\mu})$ and 
$c=\frac{1}{4 M_{q,\mu}}$.

\noindent
The proof of the  off-diagonal bound for the other family  in \eqref{OD2}
follows the  same lines.

\noindent 
\eqref{5.2(ii)}
The proofs of points 1 and 2 are similar and rely on the properties 
of the potentials described in Section~\ref{sec:potentials}. We present  
the proof of point~1, so suppose $A=D_{{}^\Vert}$, $B=d$ and 
$X_p={\rm{\sf R}}^p(d,\Omega)$.

For the family $Q^t_k$ required in \eqref{condA} of Theorem~\ref{thm:abstractThm}, 
proceed as follows.
\begin{itemize}
\item
Suppose $0<t\leq {\rm diam}\,\Omega$.
\item 
Cover $\Omega$: Let $\underline Q^t_k$  ($k\in J$) be the cubes in 
${\mathbb{R}}^n$ with side-length $t$ and corners at points in $t{\mathbb{Z}}^n$, 
which intersect $\Omega$. Let $Q^t_k = 4\underline Q^t_k \cap\Omega$. 
Then $\Omega = \cup Q^t_k$. 
\item 
There exist functions $\underline\eta_k\in 
{\mathscr{C}}^1_c(4\underline Q^t_k, [0,1])$ with 
$\|\nabla \underline\eta_k\|_\infty \le 1/t$ and $\sum{\underline\eta_k}^2=1$ 
on $\Omega$. Then $\eta_k:=\underline\eta_k|_\Omega$ is a Lipschitz 
function on $\Omega$ with values in $[0,1]$, ${\rm sppt}_\Omega(\eta_k)
\subset Q^t_k$, $\|\nabla \eta_k\|_\infty \le 1/t$ and $\sum{\eta_k}^2=1$ on 
$\Omega$. 
\item 
$d(\eta_k f) - \eta_k df = (\nabla \eta_k)\wedge f$. 
\end{itemize}
For $u\in {\rm{\sf R}}^p(d,\Omega)\cap L^q(\Omega,\Lambda)$, 
$u=dR_\Omega u$ (where $R_\Omega$ is the potential map defined in 
Section~\ref{sec:potentials}) and we define
$$
w_k=\eta_kR_\Omega(\eta_k u)\quad\mbox{and}\quad
v_k=\eta_kR_\Omega(t\nabla\eta_k\wedge u)
-t\nabla\eta_k\wedge R_\Omega(\eta_ku)+t\eta_kK_\Omega(\eta_ku),
$$
where $K_\Omega$ is defined in Section~\ref{sec:potentials}. It is clear that  
${\rm sppt}_\Omega w_k, {\rm sppt}_\Omega v_k\subset Q^t_k$.
Thanks to the relations listed in Proposition~\ref{prop:propR,K}, it is 
immediate that
$$
{\eta_k}^2 u=\eta_k(dR_\Omega+R_\Omega d +K_\Omega)\eta_ku 
=dw_k+\tfrac{1}{t}v_k
$$
and so
$$
u=\sum_k {\eta_k}^2 u =\sum_k\Bigl(dw_k+\tfrac{1}{t}v_k\Bigr).
$$ 
It remains to prove estimates on $w_k$ and $v_k$. They come from
the mapping properties of $R_\Omega$ and $K_\Omega$. Denote by
$r\in(1,\infty)$ the real number satisfying $\frac{1}{q}=\frac{1}{p^S}+\frac{1}{r}$.
In other words, $r$ satisfies $\frac{1}{r}=\frac{1}{n}-\bigl(\frac{1}{p}-\frac{1}{q}\bigr)$.
We have that 
$$
\|w_k\|_q\lesssim \|\eta_k\|_r\|R_\Omega (\eta_ku)\|_{p^S}
\lesssim |Q_k^t|^{\frac{1}{r}}\|\eta_k u\|_p
\lesssim t^{1-n(\frac{1}{p}-\frac{1}{q})} \|1\!{\rm l}_{Q_k^t}u\|_p
$$
and similarly
\begin{align*}
\|v_k\|_q\lesssim & \|\eta_k\|_r\|R_\Omega (t\nabla\eta_k\wedge u)\|_{p^S}
+\|t\nabla\eta_k\|_r\|R_\Omega(\eta_ku)\|_{p^S}+
t\|\eta_k\|_q\|K_\Omega(\eta_ku)\|_\infty
\\[4pt]
\lesssim&
t^{1-n(\frac{1}{p}-\frac{1}{q})} \bigl(1+t^{\frac{n}{p}}\bigr)
\|1\!{\rm l}_{Q_k^t}u\|_p
\le
t^{1-n(\frac{1}{p}-\frac{1}{q})} \bigl(1+{\rm diam}(\Omega)^{\frac{n}{p}}\bigr)
\|1\!{\rm l}_{Q_k^t}u\|_p,
\end{align*}
and thus the condition \eqref{condA} of Theorem~\ref{thm:abstractThm} is satisfied.

\noindent
\eqref{5.2(iii)}
This is now a consequence of Theorem~\ref{thm:abstractThm}. By density
of ${\rm{\sf R}}^q(d,\Omega)$ in ${\rm{\sf R}}^p(d,\Omega)$ and of
${\rm{\sf R}}^q(\underline{\delta},\Omega)$ in 
${\rm{\sf R}}^p(\underline{\delta},\Omega)$
(see Corollary~\ref{cor:complex-interpolation-scale} \eqref{4.2(vi)}),
the estimate in \eqref{5.2(iii)} holds for all $u\in {\rm{\sf R}}^p(d,\Omega)$ and 
for all $u\in {\rm{\sf R}}^p(\underline{\delta},\Omega)$.

\noindent 
\eqref{5.2(iv)}
It is a consequence of \eqref{5.2(iii)} and the Hodge decomposition of 
$L^p(\Omega,\Lambda)$, that there exist  constants $M_{p,\mu}$ such that 
$$
\bigl\|({\rm I}\,+zD_{{}^\Vert})^{-1}u\bigr\|_p\le M_{p,\mu} \|u\|_p,\quad
\forall\,z\in{\mathbb{C}}\setminus S_{\mu},\ 
\forall\, u\in  L^q(\Omega,\Lambda)\,.
$$
By the density of ${\mathsf{G}}^q(D_{{}^\Vert})$ in ${\mathsf{G}}^p(D_{{}^\Vert})$ 
(Proposition~\ref{prop:density}) it then follows that the $L^p$ operator 
$({\rm I}\,+zD_{{}^\Vert})$ is invertible in $L^p(\Omega,\Lambda)$, with
\begin{equation*}
\bigl\|({\rm I}\,+zD_{{}^\Vert})^{-1}u\bigr\|_p\le M_{p,\mu} \|u\|_p,\quad
\forall\,z\in{\mathbb{C}}\setminus S_{\mu},\ 
\forall\, u\in  L^p(\Omega,\Lambda)\,.
\qedhere
\end{equation*}
\end{proof}

\begin{proposition}
\label{prop:hyp-abstractThm}
Suppose that in addition to the hypotheses of Proposition~\ref{prop:OD}, that 
$D_{{}^\Vert}$ has a bounded holomorphic functional calculus in 
$L^q(\Omega,\Lambda)$ with $p_H<q<p^H$ and $\max\{1,q_S\}<p<q$.

Then condition \eqref{condB} of Theorem~\ref{thm:abstractThm2} holds in each 
of the following cases:
\begin{enumerate}
\item
The operators $A=D_{{}^\Vert}$, $B=d$ and the subspace
$X_p={\rm{\sf R}}^p(d,\Omega)$, 
\item
the operators $A=D_{{}^\Vert}$, $B=\underline{\delta}$ and the subspace
$X_p={\rm{\sf R}}^p(\underline{\delta},\Omega)$.
\end{enumerate}  
Consequently for each $r\in (p,q)$ there exist  constants $\kappa_{r,\mu}$ such that 
\begin{align}
\label{eq:functcalcR}
\bigl\|f(D_{{}^\Vert})u\bigr\|_r\le \kappa_{r,\mu} \|f\|_\infty \|u\|_r,\quad
\forall\,z\in{\mathbb{C}}\setminus S_\mu,\ 
&\forall\, u\in {\rm{\sf R}}^r(d,\Omega)\quad\text{and}\\
&\forall\, u\in {\rm{\sf R}}^r(\underline{\delta},\Omega)
\nonumber
\end{align}
for all $f\in\Psi(S_\mu^\circ)$.
\end{proposition}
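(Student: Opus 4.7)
The plan is to verify condition \eqref{condB} of Theorem~\ref{thm:abstractThm2} and then read off \eqref{eq:functcalcR} as a direct application of that abstract extrapolation result. The two cases $B=d$ on $\textsf{R}^p(d,\Omega)$ and $B=\underline{\delta}$ on $\textsf{R}^p(\underline{\delta},\Omega)$ are strictly analogous (swap the roles of $d$ and $\underline{\delta}$, $R_\Omega$ and $S_\Omega$, $K_\Omega$ and $K_\Omega^*$), so I would give full details only for the first and then invoke Remark~\ref{rem:D||,DT} for the second.

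The set-up in Proposition~\ref{prop:OD}\eqref{5.2(ii)} already provides, for each $t\in(0,\text{diam}\,\Omega]$, the cubes $Q^t_k$, the partition of unity $\{\eta_k^2\}$, the potentials $w_k=\eta_k R_\Omega(\eta_k u)$ and $v_k=\eta_k R_\Omega(t\nabla\eta_k\wedge u)-t\nabla\eta_k\wedge R_\Omega(\eta_k u)+t\eta_k K_\Omega(\eta_k u)$, the decomposition $u=\sum_k\bigl(dw_k+t^{-1}v_k\bigr)$ and the local $L^p\to L^q$ estimates
\[
\|w_k\|_q+\|v_k\|_q\lesssim t^{1-n(1/p-1/q)}\|1\!{\rm l}_{Q_k^t}u\|_p .
\]
To upgrade these from \eqref{condA} to \eqref{condB}, I would re-use the very same $w_k, v_k$ and $Q^t_k$, and then produce the additional off-diagonal estimates that \eqref{condB} requires of the families $f(D_{{}^\Vert})$. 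The key inputs are: the bounded $H^\infty$ calculus of $D_{{}^\Vert}$ on $L^q(\Omega,\Lambda)$ (new hypothesis), the off-diagonal bounds of $(I+zD_{{}^\Vert})^{-1}$, $zd(I+zD_{{}^\Vert})^{-1}$, $z\underline{\delta}(I+zD_{{}^\Vert})^{-1}$ on $L^q$ from Proposition~\ref{prop:OD}\eqref{5.2(i)}, and the mapping $d\,R_\Omega+R_\Omega d={\rm I}-K_\Omega$ so that $dw_k=\eta_k^2 u-t^{-1}v_k$ lies in $L^q$ with the same local control.

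The main estimate I would then establish is: for each $f\in\Psi(S_\mu^\circ)$ with $\|f\|_\infty\le 1$, each $t>0$ and each Borel set $E$ with ${\rm dist}(E,Q^t_k)\ge s\,t$ for some $s\ge 1$,
\[
\bigl\|1\!{\rm l}_E f(D_{{}^\Vert})\,dw_k\bigr\|_q+\bigl\|1\!{\rm l}_E f(D_{{}^\Vert})(t^{-1}v_k)\bigr\|_q\lesssim g(s)\,t^{-n(1/p-1/q)}\|1\!{\rm l}_{Q_k^t}u\|_p,
\]
with a decay function $g$ satisfying $\sum_{s\ge1}s^{n-1}g(s)<\infty$ (or whatever summable rate condition \eqref{condB} demands). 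To do this I would write $f(D_{{}^\Vert})=\frac1{2\pi i}\int_{\partial S_\theta^\circ}f(z)(zI-D_{{}^\Vert})^{-1}\,dz$ and split the contour at $|z|=t$: on the piece $|z|\le t$ the resolvent off-diagonal bounds from Proposition~\ref{prop:OD}\eqref{5.2(i)} contribute a factor $e^{-c\,\text{dist}(E,Q_k^t)/|z|}$ which, combined with the $\Psi$-class decay $|f(z)|\lesssim|z|^s(1+|z|^{2s})^{-1}$, yields the required rapid decay in ${\rm dist}(E,Q_k^t)/t$; on $|z|>t$ we use the $L^q$-bounded functional calculus applied to $dw_k$ (resp.\ $t^{-1}v_k$) whose $L^q$ norm is already controlled locally, with the decay coming from $|f(z)|/|z|$ being integrable at infinity on the contour.

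The technical obstacle is the piece of the contour near the origin when $B=d$ is paired with $w_k$: the estimate must not pick up an unfavourable factor from $\|dw_k\|_q$, so I would exploit the identity $dw_k=\eta_k^2 u-t^{-1}v_k$ to obtain an $L^q$-estimate that scales like $t^{-n(1/p-1/q)}\|1\!{\rm l}_{Q_k^t}u\|_p$. Once the off-diagonal estimates are in place, condition \eqref{condB} is verified, and Theorem~\ref{thm:abstractThm2} gives the uniform bound $\|f(D_{{}^\Vert})u\|_r\le\kappa_{r,\mu}\|f\|_\infty\|u\|_r$ for $u\in\textsf{R}^r(d,\Omega)\cap L^q(\Omega,\Lambda)$, then extended to all of $\textsf{R}^r(d,\Omega)$ via Corollary~\ref{cor:complex-interpolation-scale}\eqref{4.2(vi)}, and the symmetric statement for $\textsf{R}^r(\underline{\delta},\Omega)$ follows from Remark~\ref{rem:D||,DT}.
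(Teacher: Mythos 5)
Your proposal misreads what condition \eqref{condB} of Theorem~\ref{thm:abstractThm2} actually demands, and as a result the structure of your verification does not match the target. Condition \eqref{condB} is a Calder\'on--Zygmund decomposition at a level $\alpha>0$: one must produce a good function $g$ with $\|g\|_\infty\le\alpha$ and $\|g\|_p\lesssim\|u\|_p$, together with cubes $Q_k$ of \emph{varying} sidelengths $t_k$ covering only the bad set, satisfying the Whitney-type mass bound $\sum_k|Q_k|\lesssim\alpha^{-p}\|u\|_p^p$, and the local size condition $\|1\!{\rm l}_{Q_k\cap\Omega}u\|_p\lesssim\alpha|Q_k|^{1/p}$, so that $u=g+\sum_k(Bw_k+t_k^{-1}v_k)$. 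The fixed-scale family $Q^t_k$ you borrow from Proposition~\ref{prop:OD}\eqref{5.2(ii)} tiles the whole of $\Omega$ with cubes of common sidelength $t$, has no good part $g$, and satisfies $\sum_k|Q^t_k|\sim|\Omega|$ regardless of $\alpha$, so none of the $\alpha$-dependent conditions in \eqref{condB} are met. The paper instead runs a genuine stopping-time construction: it sets $E_\alpha=\{x:({\mathcal M}(|\tilde u|^p))^{1/p}>\alpha\}$, takes a Whitney decomposition of $E_\alpha$ into cubes $Q_k=Q(x_k,t_k)$, sets $g=1\!{\rm l}_{\Omega\setminus E_\alpha}u$ (so $\|g\|_\infty\le\alpha$ by Lebesgue differentiation), and builds $w_k,v_k$ from $R_\Omega$ and $K_\Omega$ on each Whitney cube, with the mass bound coming from the weak type $(1,1)$ of the maximal function. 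That construction is the heart of the proposition and it is absent from your argument.

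A second, related confusion: the off-diagonal estimates for $f(D_{{}^\Vert})$ and the contour-splitting $|z|\lessgtr t$ that you propose to establish are \emph{not} part of condition \eqref{condB}; they are part of the proof of the abstract Theorem~\ref{thm:abstractThm2} itself (Steps 3 and 4 in Section~\ref{sec:general}). Proposition~\ref{prop:hyp-abstractThm} only needs to verify the hypothesis \eqref{condB}; re-deriving the abstract weak-type conclusion is redundant. Finally, even granting your verification, Theorem~\ref{thm:abstractThm2} yields a weak $L^p$ bound $\|f(D_{{}^\Vert})u\|_{p,w}\lesssim\|f\|_\infty\|u\|_p$, not the strong $L^r$ bound \eqref{eq:functcalcR}. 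The paper closes the gap by Marcinkiewicz-type interpolation between this weak $L^p$ endpoint and the bounded $H^\infty$ calculus in $L^q$, using the density statement from Corollary~\ref{cor:complex-interpolation-scale}\eqref{4.2(vi)}. Your proposal asserts the strong $L^r$ bound directly from Theorem~\ref{thm:abstractThm2}, which the theorem does not give.
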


\begin{proof}
Our aim is to prove that the condition \eqref{condB} of 
Theorem~\ref{thm:abstractThm2} is satisfied. Let $\alpha>0$, 
$u\in X_p\cap L^q(\Omega,\Lambda)$ and let
$$
F:=\bigl\{x\in{\mathbb{R}}^n ;
\bigl({\mathcal{M}}(|\tilde u|^p)(x)\bigr)^{\frac{1}{p}}\le\alpha\bigr\},\quad
E_\alpha:={\mathbb{R}}^n\setminus F,
$$
where ${\mathcal{M}}$ denotes the uncentered Hardy-Littlewood maximal 
operator on ${\mathbb{R}}^n$, i.e.,
$$
{\mathcal{M}}(f)(x):=\sup_{Q\ni x}\fint_Q f(y)\,{\rm d}y, \quad x\in{\mathbb{R}}^n,
\quad f\in L^1_{\rm loc}({\mathbb{R}}^n),
$$
where the $\sup$ is taken over all cubes $Q\subset{\mathbb{R}}^n$ 
containing $x$ and $\tilde{u}$ denotes the extension by zero to ${\mathbb{R}}^n$
of $u$. Let $Q_k=Q(x_k,t_k)$, $k\in{\mathbb{N}}$ be the family 
of cubes relative to $F$ given by \cite[Chap. I, \S3, Theorem~3]{St70} and denote 
by $2^jQ_k$ the dilated cube $Q(x_k,2^jt_k)$. Since 
$2 Q_k\cap F\neq\emptyset$, we have that
$$
\int_{Q_k\cap\Omega}|u|^p\,{\rm d}x=\int_{Q_k}|\tilde u|^p\,{\rm d}x
\le |2 Q_k|\fint_{\lambda Q_k}|\tilde u|^p\,{\rm d}x
\lesssim \alpha^p |Q_k|.
$$
Moreover, by the finite overlapping property of the family of $Q_k$'s and
the properties of the maximal operator (see, e.g., 
\cite[Chap. I, \S1, Theorem~1]{St70}), we have that
$$
\sum_k|Q_k|\lesssim\Bigl|\bigcup_kQ_k\Bigr|=|E_\alpha|=
\Bigl|\bigl\{x\in{\mathbb{R}}^n;{\mathcal{M}}(|\tilde u|^p)(x)>\alpha^p\bigr\}\Bigr|
\lesssim \frac{1}{\alpha^p}\,\||\tilde u|^p\|_{L^1({\mathbb{R}}^n)}
=\frac{1}{\alpha^p}\,\|u\|_p^p.
$$
Next, for each $k\in{\mathbb{N}}$, let $\eta_k\in{\mathscr{C}}_c^\infty(Q_k,[0,1])$ 
be such that $\sum_k\eta_k^2=1\!{\rm l}_{E_\alpha}$ and 
$\|\nabla\eta_k\|_\infty\lesssim\frac{1}{t_k}$. We define $g$ by
$g:=1\!{\rm l}_{\Omega\setminus E_\alpha}u$. It is clear that
$\|g\|_p\le \|u\|_p$ and by Lebesgue differentiation Theorem, we have that
$$
|g(x)|\le \alpha\quad\mbox{for almost all }x\in\Omega.
$$
We define next, for the relevant $k\in{\mathbb{N}}$, i.e., those $k\in{\mathbb{N}}$ 
such that $Q_k\cap\Omega\neq\emptyset$ and $t_k\le{\rm diam}\,\Omega$,
$$
w_k:=\eta_kR_\Omega(\eta_k u)\quad\mbox{and}\quad
v_k:=\eta_kR_\Omega(t_k\nabla\eta_k\wedge u)-t_k\nabla\eta_k\wedge
R_\Omega(\eta_ku)-t_k\eta_kK_\Omega(\eta_ku).
$$
Since $\eta_k$ is smooth and $R_\Omega(\eta_k u)\in {\rm{\sf R}}^p(d,\Omega)$,
it follows that $w_k\in {\rm{\sf R}}^p(d,\Omega)$. We have that 
$\eta_k^2 u=dw_k+\frac{1}{t_k}v_k$, and therefore
$$
u=g+\sum_k\Bigl(dw_k+\textstyle{\frac{1}{t_k}}v_k\Bigr).
$$
Moreover, $w_k$ and $v_k$, $k\in{\mathbb{N}}$, satisfy the estimate
$$
\|w_k\|_q, \|v_k\|_q\lesssim t_k^{1-n(\frac{1}{p}-\frac{1}{q})}
\|1\!{\rm l}_{Q_k\cap\Omega}u\|_p,
$$
which proves that in our case, the conditions of \eqref{condB} of 
Theorem~\ref{thm:abstractThm2} are satisfied. Therefore, applying
the result of Theorem~\ref{thm:abstractThm2}, we obtain the following
weak $L^p$-estimate:
\begin{align*}
\bigl\|f(D_{{}^\Vert})u\bigr\|_{p,w}\le K_{p,\mu} \|f\|_\infty \|u\|_p,\quad
\forall\,z\in{\mathbb{C}}\setminus S_\mu,\ 
&\forall\, u\in {\rm{\sf R}}^q(d,\Omega)={\rm{\sf R}}^p(d,\Omega)\cap
L^q(\Omega,\Lambda)\quad\text{and}\\
&\forall\, u\in {\rm{\sf R}}^q(\underline{\delta},\Omega)=
{\rm{\sf R}}^p(\underline{\delta},\Omega)\cap L^q(\Omega,\Lambda).
\end{align*}
By interpolation between this last result and the fact that $D_{{}^\Vert}$
has a bounded holomorphic functional calculus in $L^q(\Omega,\Lambda)$,
and using the density of ${\rm{\sf R}}^q(d,\Omega)$ in ${\rm{\sf R}}^r(d,\Omega)$
and of ${\rm{\sf R}}^q(\underline{\delta},\Omega)$ in 
${\rm{\sf R}}^r(\underline{\delta},\Omega)$ for all $p<r<q$
(see Corollary~\ref{cor:complex-interpolation-scale} \eqref{4.2(vi)}), 
we obtain \eqref{eq:functcalcR}.
\end{proof}

\begin{proof}[Proof of Theorem~\ref{thm:mainThm4.1}]
We are now in position to prove our main theorem.

\noindent
The assertion \eqref{5.1(iii)} is proved by iteration: we start with $q=2$ and
apply Proposition~\ref{prop:OD} \eqref{5.2(iii)}
and Proposition~\ref{prop:hyp-abstractThm} to obtain \eqref{5.1(iii)}
for all $r\in\bigl(\max\{1,2_S\},2\bigr]$. We iterate the procedure $a$ times where 
$a$ is the smallest integer defined by $\frac{2n}{n+2a}<(p_H)_S$ (we can take
$a=1+E\bigl(\frac{n}{2}\bigr)$ were $E(s)$ denotes the integer part of a real $s$) 
and we obtain \eqref{5.1(iii)} for all $r\in \bigl(\max\{1,{p_H}_S\},2\bigr]$. The 
range $[2,p^H)$ is obtained by taking adjoints in the interval $(p_H,2]$.

\noindent
\eqref{5.1(iii)}$\implies$\eqref{5.1(i)}:
For $p$ in the range where \eqref{eq:HodgeLpVert1} holds, it is immediate that
for all $u\in L^p(\Omega,\Lambda)$, and all $z\in{\mathbb{C}}\setminus S_\theta$,
$\theta \in (0,\frac{\pi}{2})$,
$$
({\rm I}\,+zD_{{}^\Vert})^{-1}u=({\rm I}\,+zD_{{}^\Vert})^{-1}
\bigl({\mathcal{P}}_{{\mathsf{R}}^p(d)}u\bigr) +
({\rm I}\,+zD_{{}^\Vert})^{-1}
\bigl({\mathcal{P}}_{{\mathsf{R}}^p(\underline{\delta})}u\bigr)
+{\mathcal{P}}_{{\mathsf{N}}^p(D_{{}^\Vert})}u,
$$
and therefore, by \eqref{eq:est_resRVert},
$$
\|({\rm I}\,+zD_{{}^\Vert})^{-1}u\|_p\le C{p,\theta}
\bigl(\|{\mathcal{P}}_{{\mathsf{R}}^p(d)}u\|_p+
\|{\mathcal{P}}_{{\mathsf{R}}^p(\underline{\delta})}u\|_p\bigr)
+\|{\mathcal{P}}_{{\mathsf{N}}^p(D_{{}^\Vert})}u\|_p\le (C_{p,\theta}+1)\|u\|_p.
$$
Similarly, for all $f\in \Psi(S_\mu^\circ)$, $\mu\in(0,\frac{\pi}{2})$,
$$
f(D_{{}^\Vert})u=f(D_{{}^\Vert}) \bigl({\mathcal{P}}_{{\mathsf{R}}^p(d)}u\bigr) +
f(D_{{}^\Vert})\bigl({\mathcal{P}}_{{\mathsf{R}}^p(\underline{\delta})}u\bigr),
$$
which gives the estimate
$$
\|f(D_{{}^\Vert})u\|_p
\le K_{p,\mu}\bigl(\|{\mathcal{P}}_{{\mathsf{R}}^p(d)}u\|_p+
\|{\mathcal{P}}_{{\mathsf{R}}^p(\underline{\delta})}u\|_p\bigr)
\le K_{p,\mu}\|u\|_p
$$
thanks to \eqref{eq:fcRDVert}.

\noindent
\eqref{5.1(ii)}: Assume that $p$ is such that
$D_{{}^\Vert}$ admits a bounded $S_\mu^\circ$ holomorphic functional calculus
in $L^p(\Omega,\Lambda)$. The fact that $D_{{}^\Vert}$ is
bisectorial in $L^p(\Omega,\Lambda)$ implies that 
$$
L^p(\Omega,\Lambda)=\overline{{\rm{\sf R}}^p(D_{{}^\Vert})}\oplus
{\rm{\sf N}}^p(D_{{}^\Vert}),
$$
the projections on each subspace being bounded.
See, e.g., \cite[Theorem~3.8]{CDMcIY96}. 
Then the restriction of $D_{{}^\Vert}$ in 
$Y_p:=\overline{{\rm{\sf R}}^p(D_{{}^\Vert})}$ with domain 
${\rm{\sf D}}^p(D_{{}^\Vert})\cap Y_p$ is densely defined, one-to-one and admits 
a bounded $S_\mu^\circ$ holomorphic functional calculus in $Y_p$.
Following the idea of \cite[\S5.3]{AMcIR08}, let ${\rm sgn}$ be the (bounded)
holomorphic function in $S_\mu^\circ$ defined by ${\rm sgn}(z)=\frac{z}{\sqrt{z^2}}$
where $\sqrt{\cdot}$ is the holomorphic continuation of 
$(0,+\infty)\ni x\mapsto\sqrt{x}$ to ${\mathbb{C}}\setminus (-\infty,0]$. Then
we have that ${\rm sgn}^2(D_{{}^\Vert})u={\rm sgn}({\rm sgn}(D_{{}^\Vert})u)=u$ 
for all $u\in Y_p$. Now, \eqref{eq:HodgeLpVert1} is a consequence
of $\|D_{{}^\Vert}u\|_p\approx \|du\|_p+\|\underline{\delta}u\|_p\approx
\bigl\|\sqrt{D_{{}^\Vert}^2}\,u\bigr\|_p$. Indeed, assuming these equivalences 
hold, for all $u\in Y_p$, 
\begin{align*}
&u=dv+\underline{\delta}w,\quad\mbox{ where }\quad
v=\frac{\underline{\delta}}{D_{{}^\Vert}^2}\,u\quad\mbox{and}\quad
w=\frac{d}{D_{{}^\Vert}}\,u,
\\
&v\in {\rm{\sf D}}^p(d,\Omega),\  \|dv\|_p\lesssim \|u\|_p \quad\mbox{and}\quad
w\in {\rm{\sf D}}^p(\underline{\delta},\Omega), \ 
\|\underline{\delta}w\|_p\lesssim \|u\|_p.
\end{align*}
The equivalence $\|D_{{}^\Vert}u\|_p\approx \bigl\|\sqrt{D_{{}^\Vert}^2}\,u\bigr\|_p$
comes from the boundedness of the holomorphic functional calculus for
$D_{{}^\Vert}$ in $Y_p$. To prove $\|D_{{}^\Vert}u\|_p\approx 
\|du\|_p+\|\underline{\delta}u\|_p$, it is sufficient to show that
$\|du\|_p\lesssim\|D_{{}^\Vert}u\|_p$ for all $u\in {\rm{\sf D}}^p(D_{{}^\Vert})$.
Write $\displaystyle{u=\sum_{k=0}^n u^k}$ where 
$u^k\in L^p(\Omega,\Lambda^k)$. Then
\begin{align*}
&\|du\|_p\approx\sum_{k=0}^n\|(du)^k\|_p=\sum_{\ell=0}^n\|d(u^\ell)\|_p
\le \sum_{\ell=0}^n\|D_{{}^\Vert}(u^\ell)\|_p\approx
\sum_{\ell=0}^n\bigl\|\sqrt{D_{{}^\Vert}^2}(u^\ell)\bigr\|_p
\\
=&\sum_{\ell=0}^n\bigl\|(\sqrt{D_{{}^\Vert}^2}u)^\ell\bigr\|_p
\approx \bigl\|\sqrt{D_{{}^\Vert}^2}u\bigr\|_p\approx \|D_{{}^\Vert}u\|_p.
\end{align*}
The bound $\displaystyle{\sum_{\ell=0}^n\|d(u^\ell)\|_p\le 
\sum_{\ell=0}^n\|D_{{}^\Vert}(u^\ell)\|_p}$
holds because $d(u^\ell)\in L^p(\Omega,\Lambda^{\ell+1})$ and
$\underline{\delta}(u^\ell)\in L^p(\Omega,\Lambda^{\ell-1})$.
This proves then that \eqref{eq:HodgeLpVert1} holds if $p$ is as in
\eqref{5.1(ii)}.
\end{proof}

\section{Perturbed Hodge-Dirac operators on strongly Lipschitz domains}
\label{sec:pertDirac}

Let $\Omega\subset{\mathbb{R}}^n$ be a bounded strongly Lipschitz domain.
Let $\phi:{\mathbb{R}}^n\to{\mathbb{R}}^n$ a bilipschitz map as in 
Proposition~\ref{prop:lip-sim-smooth} for which $\Omega'=\phi^{-1}(\Omega)$ 
is a smooth domain. The following result is the perturbed version of 
Theorem~\ref{thm:mainThm4.1} in the case of bounded strongly Lipschitz domains.
 
\begin{theorem}
\label{thm:pertDirac}
Let $B\in L^\infty(\Omega,{\mathscr{L}}(\Lambda))$ such that 
$\Re e B \geq \kappa I$ ($\kappa>0$) and $B(x)$ 
is invertible for almost all $x\in\Omega$. We assume moreover that
$B^{-1}:\Omega\to{\mathscr{L}}(\Lambda)$ defined by $B^{-1}(x):=(B(x))^{-1}$
belongs to $L^\infty(\Omega,{\mathscr{L}}(\Lambda))$. Let 
$D_{{}^\Vert,B}$ be the (unbounded) operator defined on $L^2(\Omega,\Lambda)$
by 
$$
D_{{}^\Vert,B}=d+\underline{\delta}_B=d+B^{-1}\underline{\delta}B \quad
{\rm{\sf D}}(D_{{}^\Vert,B})={\rm{\sf D}}(d)\cap 
{\rm{\sf D}}(\underline{\delta}B).
$$
Then there exist $\omega_B\in\bigl[0,\frac{\pi}{2}\bigr)$ and 
$\varepsilon_B,\tilde\varepsilon_B>0$ 
such that for all $\theta\in\bigl(\omega_B,\frac{\pi}{2}\bigr)$ and all
$p\in\bigl(\max\{1,(2-\tilde\varepsilon_B)_S\}, 2+\varepsilon_B\bigr)$,
there exists $C_{p,\theta}>0$ such that
\begin{equation}
\label{eq:resRDVertpert}
({\rm I}\,+z D_{{}^\Vert,B})^{-1}:
\begin{cases}
{\rm{\sf R}}^p(d,\Omega)\\
{\rm{\sf R}}^p(\underline{\delta}_B,\Omega)
\end{cases}
\longrightarrow L^p(\Omega,\Lambda),
\qquad\forall z\in{\mathbb{C}}\setminus S_\theta,
\end{equation}
with the estimates
\begin{equation}
\label{eq:est_resRVertpert}
\sup_{z\in{\mathbb{C}}\setminus S_\theta}\|({\rm I}\,+z D_{{}^\Vert,B})^{-1}u\|_p
\le C_{p,\theta}\|u\|_p,
\quad\forall u\in {\rm{\sf R}}^p(d,\Omega)\ \mbox{ and }\ 
\forall u\in {\rm{\sf R}}^p(\underline{\delta}_B,\Omega).
\end{equation}
For all $\mu\in\bigl(0,\frac{\pi}{2}\bigr)$, there exists a constant $K_{p,\mu}$ such
that for all $f\in\Psi(S_\mu^\circ)$,
\begin{equation}
\label{eq:fcRDVertpert}
f(D_{{}^\Vert,B}):\begin{cases}
{\rm{\sf R}}^p(d,\Omega)\\
{\rm{\sf R}}^p(\underline{\delta}_B,\Omega)
\end{cases}
\longrightarrow L^p(\Omega,\Lambda),
\end{equation}
with the estimates
\begin{equation}
\label{eq:est_fcRVertpert} 
\|f(D_{{}^\Vert,B})u\|_p \le K_{p,\mu}\|f\|_{L^\infty(S_\mu^\circ)}\|u\|_p,
\quad\forall u\in {\rm{\sf R}}^p(d,\Omega)\ \mbox{ and }\ 
\forall u\in {\rm{\sf R}}^p(\underline{\delta}_B,\Omega).
\end{equation}
\end{theorem}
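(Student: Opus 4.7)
My plan is to adapt the iterative scheme of Theorem~\ref{thm:mainThm4.1} to the perturbed setting, with the essential new ingredient being a base case at $p=2$. Since $B$ is only a bounded multiplier, the operator $\underline{\delta}_B = B^{-1}\underline{\delta}B$ shares the closedness and null/range structure of $\underline{\delta}$ up to $B$-twists, and the potential operators $R_\Omega$, $S_\Omega$, $K_\Omega$ built in Section~\ref{sec:potentials} still furnish solvers for $d$ and $\underline{\delta}$. Using Proposition~\ref{prop:lip-sim-smooth} I would first pull back along $\phi$ to reduce to a smooth domain when that simplifies the $p=2$ quadratic estimate, but all subsequent arguments take place on $\Omega$ itself.

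The base case is the statement that $D_{{}^\Vert,B}$ is bisectorial with a bounded $S_\mu^\circ$ holomorphic functional calculus on $L^2(\Omega,\Lambda)$. I would obtain this by the Axelsson--Keith--McIntosh quadratic estimate argument from~\cite{AKMcI06Invent}: writing $D_{{}^\Vert,B} = \Gamma + \Gamma^*_B$ with $\Gamma=d$ and $\Gamma^*_B = B^{-1}\underline{\delta}B$, the standard hypotheses ((H1)--(H3) nilpotence and accretivity, plus a Poincaré-type estimate) hold because of the potential bounds of Proposition~\ref{prop:propR,K} and the strict accretivity of $B$. This yields the $L^2$ Hodge decomposition
\begin{equation*}
L^2(\Omega,\Lambda) = \overline{{\rm{\sf R}}(d)} \oplus \overline{{\rm{\sf R}}(\underline{\delta}_B)} \oplus {\rm{\sf N}}(D_{{}^\Vert,B}),
\end{equation*}
together with the bounded holomorphic functional calculus, where the splitting is no longer orthogonal but oblique.

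With the $p=2$ theory in hand, I would use \v{S}ne\u\i berg's theorem together with the complex interpolation scales of Corollary~\ref{cor:complex-interpolation-scale} (which transfer to the $B$-twisted ranges since multiplication by $B,B^{-1}$ is bounded on every $L^p$) to open an interval $(2-\tilde\varepsilon_B, 2+\varepsilon_B)$ on which the Hodge decomposition $L^p = {\rm{\sf R}}^p(d)\oplus {\rm{\sf R}}^p(\underline{\delta}_B) \oplus {\rm{\sf N}}(D_{{}^\Vert,B})$ holds with bounded projections. This defines the role of $\varepsilon_B$ and gives closedness of $D_{{}^\Vert,B}$ on this interval. From here I repeat verbatim the off-diagonal argument of Proposition~\ref{prop:OD}: the commutator computations with a Lipschitz cut-off $\eta = e^{\alpha\xi}$ only use the product rule and boundedness of the $L^q$ resolvents, both of which are unaffected by the multiplier $B$. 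This yields $L^q$--$L^q$ off-diagonal bounds for $\{(I+zD_{{}^\Vert,B})^{-1}\}$ and for $\{zd(I+zD_{{}^\Vert,B})^{-1}\}$, $\{z\underline{\delta}_B(I+zD_{{}^\Vert,B})^{-1}\}$.

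Finally I would run the iterative extrapolation of Propositions~\ref{prop:OD} and \ref{prop:hyp-abstractThm}, starting from $q = 2$: the decompositions $u = \sum (dw_k + t_k^{-1}v_k)$ (or its Calderón--Zygmund variant in Proposition~\ref{prop:hyp-abstractThm}) use only $R_\Omega$, $K_\Omega$ from Proposition~\ref{prop:propR,K}, which are independent of $B$, and place $w_k,v_k$ in ${\rm{\sf R}}(d)$ as required. Applying Theorems~\ref{thm:abstractThm} and~\ref{thm:abstractThm2} drops the integrability exponent by one Sobolev step per iteration, and after finitely many iterations (stopping when the exponent falls below $(2-\tilde\varepsilon_B)_S$) I obtain \eqref{eq:resRDVertpert}--\eqref{eq:est_fcRVertpert} on ${\rm{\sf R}}^p(d)$; the corresponding statements on ${\rm{\sf R}}^p(\underline{\delta}_B)$ follow by the same argument with the roles of $\Gamma$ and $\Gamma^*_B$ swapped, and the bound on all of $L^p$ is then assembled via the $L^p$ Hodge decomposition exactly as in the proof of \eqref{5.1(i)} from \eqref{5.1(iii)}. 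The main obstacle is the base case: establishing the quadratic estimate for $D_{{}^\Vert,B}$ on the Lipschitz domain $\Omega$, since the AKM framework was developed on ${\mathbb{R}}^n$ and requires verifying the Poincaré-type estimate (H5)--(H6) in the present domain setting; once this is in place the rest is essentially a perturbation-robust re-run of Section~\ref{sec:Dirac}.
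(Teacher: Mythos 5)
Your high-level scheme --- a base case at $p=2$ via transfer to the smooth domain $\Omega'$ of Proposition~\ref{prop:lip-sim-smooth}, opening a Hodge interval $(2-\tilde\varepsilon_B,2+\varepsilon_B)$ by \v Sne\u\i berg, off-diagonal bounds and Sobolev-step extrapolation --- is the one the paper uses, but there are two concrete gaps. For the $L^2$ base case you cite the $\mathbb{R}^n$-version of Axelsson--Keith--McIntosh (\cite{AKMcI06Invent}) and flag the Poincar\'e/coercivity hypothesis as a potential obstacle on a Lipschitz domain. The paper instead invokes the mixed-boundary-value version \cite[Theorem~2]{AKMcI06}, which is set on domains, and the coercivity hypothesis (H8) there is verified precisely because one has transferred to the smooth $\Omega'$, where ${\rm{\sf D}}(d+\underline{\delta},\Omega')\hookrightarrow H^1(\Omega',\Lambda)$ (Remark~\ref{rem:smooth-or-convex}). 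So the pullback via $\phi$ is not an optional convenience: it is exactly what discharges the hypothesis you are worried about, and the potential bounds of Proposition~\ref{prop:propR,K} are not the right tool here.

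The second gap is more serious. Your iteration starts from $q=2$ and only drops the exponent, so it produces the lower interval $(\max\{1,(2-\tilde\varepsilon_B)_S\},\,2]$, but you offer no argument for $p\in(2,2+\varepsilon_B)$. In the unperturbed Theorem~\ref{thm:mainThm4.1} the upper half is free because $D_{{}^\Vert}$ is self-adjoint; $D_{{}^\Vert,B}$ is not, and the paper obtains $[2,2+\varepsilon_B)$ by a separate duality step applied to the adjoint $\underline{\delta}+B^*d(B^*)^{-1}$, running the same extrapolation with the swapped, $B^*$-conjugated potentials $S_\Omega$ and $B^*R_\Omega(B^*)^{-1}$. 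Relatedly, even in the downward iteration the potential serving the subspace ${\rm{\sf R}}^p(\underline{\delta}_B)$ cannot be $S_\Omega$ itself: you need the twisted map $B^{-1}S_\Omega B$, which satisfies $\underline{\delta}_B(B^{-1}S_\Omega B)={\rm I}$ on ${\rm{\sf R}}^p(\underline{\delta}_B)$ and inherits the mapping properties of Proposition~\ref{prop:propR,K}. Your phrase ``swap the roles of $\Gamma$ and $\Gamma^*_B$'' gestures at this but does not supply it.
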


\begin{proof}
The proof follows the lines of the proof of Theorem~\ref{thm:mainThm4.1}.
Let $\tilde\varepsilon_B,\varepsilon_B>0$ such that the Hodge decomposition
$$
L^p(\Omega,\Lambda)={\rm{\sf R}}^p(d,\Omega)\oplus
{\rm{\sf R}}^p(\underline{\delta}_B,\Omega)\oplus
{\rm{\sf N}}^p(d+\underline{\delta}_B,\Omega)
$$
holds for all $p\in \bigl(2-\tilde\varepsilon_B,2+\varepsilon_B\bigr)$.
We first note that the result is true for $p=2$. To prove that 
$D_{{}^\Vert,B}$ admits a bounded holomorphic functional calculus in 
$L^2(\Omega,\Lambda)$, we use the characterization of 
\cite[Theorem~2]{AKMcI06} after transformation of the problem in the 
smooth domain $\Omega'$
from Proposition~\ref{prop:lip-sim-smooth}: $\Omega=\phi(\Omega')$
where $\phi:{\mathbb{R}}^n\to{\mathbb{R}}^n$ is a bilipschitz map.
The triplet $(d,\tilde{B}^{-1},\tilde{B})$ with 
$\tilde{B}=(\tilde{\phi_*})^{-1}B(\phi^*)^{-1}$ satisfies the conditions
$(H1)-(H8)$ of \cite{AKMcI06} (the condition $(H8)$ is satisfied thanks to
the embedding of ${\rm{\sf D}}(d+\underline{\delta},\Omega')$ into 
$H^1(\Omega',\Lambda)$ since $\Omega'$ is smooth: see 
Remark~\ref{rem:smooth-or-convex}). We conclude then that the operator 
$d+\underline{\delta}_{\tilde{B}}$ admits a bounded holomorphic
functional calculus in $L^2(\Omega',\Lambda)$. Therefore, 
$d+\underline{\delta}_B$ admits a bounded holomorphic functional
calculus in $L^2(\Omega,\Lambda)$.
Next, instead of potentials $R_\Omega$ and $S_\Omega$, we use 
$R_\Omega$ and $B^{-1}S_\Omega B$ which have the same mapping 
properties as $R_\Omega$ and $S_\Omega$ listed in 
Proposition~\ref{prop:propR,K}. This gives the result in the range 
$\bigl(\max\{1,(2-\tilde\varepsilon_B)_S\}, 2\bigr]$. To 
obtain the range $[2, 2+\varepsilon_B)$, we proceed by duality,
using $\underline{\delta}+B^*d(B^*)^{-1}$ the adjoint of $D_{{}^\Vert,B}$ and 
the potential maps
$S_\Omega$ and $B^*R_\Omega (B^*)^{-1}$ instead of $R_\Omega$
and $S_\Omega$.
\end{proof}

\section{Estimates of the Hodge exponents on strongly Lipschitz domains}
\label{sec:pH}

In this section, we focus on the case of bounded strongly Lipschitz domains.
We start with a result which gives good integrability properties of solutions
of $D_{{}^\Vert}u=f$ on $\Omega$ when $\Omega\subset{\mathbb{R}}^n$
is a bounded strongly Lipschitz domain. We recall that, according to
\cite[Theorem~11.2]{MMT01}, there exists $0<\varepsilon'\le1$ depending on
the geometry of $\Omega$ such that for all $r\in(2-\varepsilon',2+\varepsilon')$,
there is a constant $C>0$ with
\begin{equation}
\label{eq:solvability}
\|u\|_{B^{r,r^\sharp}_{1/r}(\Omega,\Lambda)}
\le C\,\bigl(\|u\|_r+\|du\|_r+\|\delta u\|_r+
\|\nu\lrcorner\, u\|_{L^r(\partial\Omega,\Lambda)}\bigr),
\end{equation}
where $r^\sharp:=\max\{2,r\}$. This estimate is also true if we replace 
$\|\nu\lrcorner\, u\|_{L^r(\partial\Omega,\Lambda)}$ in \eqref{eq:solvability} by 
$\|\nu\wedge u\|_{L^r(\partial\Omega,\Lambda)}$.
Applying \cite[Corollary~2, page\,36]{RS96}, 
we can show that the embedding
\begin{equation}
\label{eq:besov-in-L}
B^{r,r^\sharp}_{1/r}\hookrightarrow L^{r^*}
\end{equation}
holds as long as $r^\sharp\le r^*$. In particular \eqref{eq:besov-in-L} is true for 
all $r\ge\frac{2(n-1)}{n}=2-\frac{2}{n}$. Combining \eqref{eq:solvability} and
\eqref{eq:besov-in-L}, we obtain
\begin{equation}
\label{eq:solvability2}
\|u\|_{r^*}
\le C\,\bigl(\|u\|_r+\|du\|_r+\|\delta u\|_r+
\|\nu\lrcorner\, u\|_{L^r(\partial\Omega,\Lambda)}\bigr)
\end{equation}
and
\begin{equation}
\label{eq:solvability3}
\|u\|_{r^*}
\le C\,\bigl(\|u\|_r+\|du\|_r+\|\delta u\|_r+
\|\nu\wedge u\|_{L^r(\partial\Omega,\Lambda)}\bigr)
\end{equation}
for all $r\in\bigl(2-\min\bigl\{\varepsilon',\frac{2}{n}\bigr\},2+\varepsilon'\bigr)$.
By Theorem~\ref{thm:Hodge-dec-Lp}, we know that there exits $\varepsilon>0$
such that
the Hodge decompositions \eqref{eq:HodgeLpVert} and \eqref{eq:HodgeLpbot}
hold for all $p\in\bigl((2+\varepsilon)',2+\varepsilon\bigr)$. Let 
$\alpha:=\min\bigl\{\varepsilon,\varepsilon',\frac{2}{n-2}\bigr\}>0$. Remark that
this particular choice of $\alpha$ ensures that \eqref{eq:solvability2},
\eqref{eq:solvability3}, \eqref{eq:HodgeLpVert} and \eqref{eq:HodgeLpbot}
hold in the interval $\bigl((2+\alpha)',2+\alpha\bigr)$.

We have the following result.

\begin{theorem}
\label{thm:pHlip}
Let $\Omega\subset{\mathbb{R}}^n$ be a bounded strongly Lipschitz domain.
Then we can estimate the Hodge exponents associated to the Hodge
decompositions \eqref{eq:HodgeLpVert} and \eqref{eq:HodgeLpbot} as follows
$$
p_H\le \bigl((2+\alpha)^*\bigr)'=\frac{(2+\alpha)n}{n(1+\alpha)+1}
<\frac{2n}{n+1}=(2^*)'<\frac{2n}{n-1}=2^*
<\frac{(2+\alpha)n}{n-1}=(2+\alpha)^*\le p^H.
$$
In particular, in dimension $n=2$, we have that $p_H<\frac{4}{3}<4<p^H$
and in dimension $n=3$, we have that $p_H<\frac{3}{2}$ and
$p^H>3$.
\end{theorem}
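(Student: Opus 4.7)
The goal is to show $p^H \ge (2+\alpha)^*$; the bound $p_H \le ((2+\alpha)^*)'$ then follows from $p^H = p_H'$ in Theorem~\ref{thm:Hodge-dec-Lp}, and the parallel statements involving $D_{{}^\bot}$ are handled symmetrically using \eqref{eq:solvability3} together with Remark~\ref{rem:D||,DT}. The strict middle inequalities $((2+\alpha)^*)' < (2^*)' < 2^* < (2+\alpha)^*$ are elementary consequences of $\alpha > 0$, and the low-dimensional conclusions $p_H < 4/3 < 4 < p^H$ in dimension $n=2$ and $p_H < 3/2 < 3 < p^H$ in dimension $n=3$ follow by substitution.

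By the choice of $\alpha$, both the $L^{2+\alpha}$ Hodge decomposition \eqref{eq:HodgeLpVert} and the boundary-regularity estimate \eqref{eq:solvability2} are available at $r = 2+\alpha$. The main step is to apply \eqref{eq:solvability2} to the solution $w = D_{{}^\Vert}^{-1}u$ of $D_{{}^\Vert}w = u$ for $u \in {\sf R}^{2+\alpha}(D_{{}^\Vert})$. Since $w \in {\sf D}^{2+\alpha}(\underline\delta)$ automatically satisfies the tangential boundary condition $\nu\lrcorner\,w = 0$, one obtains
\[
\|w\|_{(2+\alpha)^*} \lesssim \|w\|_{2+\alpha} + \|dw\|_{2+\alpha} + \|\underline\delta w\|_{2+\alpha} \lesssim \|u\|_{2+\alpha},
\]
using the $L^{2+\alpha}$ boundedness of $D_{{}^\Vert}^{-1}$ on its range together with the Hodge projections (so that $\|dw\|_{2+\alpha} + \|\underline\delta w\|_{2+\alpha} \approx \|u\|_{2+\alpha}$). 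This upgrades the solution operator to a bounded map $D_{{}^\Vert}^{-1}\colon L^{2+\alpha}\bigl({\sf R}(D_{{}^\Vert})\bigr) \to L^{(2+\alpha)^*}$, i.e., a one-step trace-Sobolev gain.

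To convert this gain into the actual $L^{(2+\alpha)^*}$ Hodge decomposition, I would pair it with the true potential operators $R_\Omega,S_\Omega$ from Proposition~\ref{prop:propR,K}. Given $u \in L^{(2+\alpha)^*} \subset L^{2+\alpha}$, the $L^{2+\alpha}$ decomposition reads $u = {\mathcal P}_{\sf N}u + dv_1 + \underline\delta v_2$ with $v_1 \in {\sf R}^{2+\alpha}(\underline\delta)$ and $v_2 \in {\sf R}^{2+\alpha}(d)$; each of $v_1,v_2$ satisfies the hypotheses of \eqref{eq:solvability2} (with the appropriate boundary condition, and with one of $d,\underline\delta$ annihilating it), so both are controlled in $L^{(2+\alpha)^*}$ by $\|u\|_{(2+\alpha)^*}$. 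Combined with the classical Sobolev mapping $L^p \to L^{p^S}$ of the potentials and the interpolation/density properties in Corollary~\ref{cor:complex-interpolation-scale}, one then extends the Hodge projections to bounded operators on $L^{(2+\alpha)^*}$, giving $(2+\alpha)^* \le p^H$.

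The hard part is precisely this last extension: \eqref{eq:solvability2} yields $L^{(2+\alpha)^*}$ control on the potentials $v_1,v_2$ themselves, whereas the Hodge decomposition in $L^{(2+\alpha)^*}$ requires $L^{(2+\alpha)^*}$ control on their images $dv_1 = {\mathcal P}_{{\sf R}(d)}u$ and $\underline\delta v_2 = {\mathcal P}_{{\sf R}(\underline\delta)}u$ under first-order differential operators, which a priori live only in $L^{2+\alpha}$. Bridging this gap — converting the Sobolev improvement at the potential level into an integrability gain at the level of its image under $d$ or $\underline\delta$ — is the main technical obstacle, and I expect it to be overcome by iterating the true potentials of Proposition~\ref{prop:propR,K} together with a \v Sne\u\i berg-type stability argument within the complex interpolation scale of the range spaces.
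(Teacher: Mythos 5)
You have correctly identified the Sobolev gain coming from \eqref{eq:solvability2} at the level of the potential $w = D_{{}^\Vert}^{-1}u$ — this is exactly Lemma~\ref{lem:inverseD} — and you have correctly located the obstruction: improved integrability of $v_1,v_2$ does not transfer to $dv_1 = {\mathcal P}_{{\sf R}(d)}u$ and $\underline\delta v_2 = {\mathcal P}_{{\sf R}(\underline\delta)}u$. But your proposed fix does not close the gap. Iterating the true potentials $R_\Omega, S_\Omega$ gains integrability only on $R_\Omega u$, $S_\Omega u$ themselves, never on $dR_\Omega u$; and a \v Sne\u\i berg argument gives openness of the Hodge interval (which the paper already uses in Theorem~\ref{thm:Hodge-dec-Lp}) but is qualitative and cannot produce the quantitative endpoint $(2+\alpha)^*$. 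So trying to prove the Hodge decomposition directly at $p = (2+\alpha)^*$ stays stuck at exactly the point you flag.

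The paper avoids this altogether. Having built $T,K$ with the Sobolev gain (Lemma~\ref{lem:inverseD}, your first step), it feeds them into the abstract extrapolation machinery of Theorems~\ref{thm:abstractThm} and~\ref{thm:abstractThm2} with $A = B = D_{{}^\Vert}$ and, crucially, $X_p = L^p(\Omega,\Lambda)$ — the whole space, not a range subspace. Because $u = D_{{}^\Vert}Tu + Ku$ is a decomposition of an arbitrary $u$ with $T$ mapping $L^p \to L^{p^S}$, the hypotheses \eqref{condA} and \eqref{condB} hold on all of $L^p$, and one obtains that $D_{{}^\Vert}$ itself has a bounded $S_\mu^\circ$ holomorphic functional calculus in $L^p(\Omega,\Lambda)$ for every $p \in \bigl(((2+\alpha)^*)',(2+\alpha)^*\bigr)$ (Corollary~\ref{cor:pHlip}). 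The converse implication \eqref{5.1(ii)} of Theorem~\ref{thm:mainThm4.1} — bounded holomorphic functional calculus in $L^p$ forces $p_H < p < p^H$ — then yields $p_H \le ((2+\alpha)^*)'$ and $p^H \ge (2+\alpha)^*$ without ever proving the Hodge decomposition at the endpoint directly. This converse direction is the key ingredient missing from your argument: it turns the question ``does $(H_p)$ hold at $p = (2+\alpha)^*$?'' into the different and tractable question ``does $D_{{}^\Vert}$ have a bounded $H^\infty$-calculus on $L^p$ for $p$ just inside $(2+\alpha)^*$?'', which the $D_{{}^\Vert}$-adapted potentials answer.
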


Before proving this theorem, we first give some properties of the null
space of the operator $D_{{}^\Vert}$ or $D_{{}^\bot}$.

\begin{lemma}
\label{lem:nullspace}
Let $r\in\bigl((2+\alpha)',2+\alpha\bigr)$. Let ${\rm{\sf N}}^r(D)$ be the null space of 
$D=D_{{}^\Vert}$ or $D_{{}^\bot}$ endowed with the $L^r$-norm.
Then the projection $P:L^r(\Omega,\Lambda)\to{\rm{\sf N}}^r(D)$ maps 
$L^r(\Omega,\Lambda)$ to $L^{r^*}(\Omega,\Lambda)$. Moreover, 
${\rm{\sf N}}^r(D)={\rm{\sf N}}^{r^*}(D)$ with equivalent norms and the projection
$P$ extends to a bounded operator from $L^p(\Omega,\Lambda)$ to
$L^{p^S}(\Omega,\Lambda)\cap {\rm{\sf N}}^p(D)$ for all 
$p\in J_\alpha$ where $J_\alpha$ denotes the open interval around 
$(2^*)'=\frac{2n}{n+1}$: $\bigl(((2+\alpha)^*)',\bigl(((2+\alpha)')^*\bigr)'\bigr)$.
\end{lemma}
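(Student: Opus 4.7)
The plan is to first establish that every element of the null space enjoys higher integrability via the boundary regularity estimates, then promote this to the projection through the Hodge decomposition, and finally recover the full $L^p\!\to\!L^{p^S}$ mapping property by a duality argument exploiting the self-adjointness of $P$.

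First I would take $u\in{\rm{\sf N}}^r(D_{{}^\Vert})$, so $du=0$ and $\underline{\delta}u=0$. Because $\Omega$ is strongly Lipschitz and $u\in{\rm{\sf D}}^r(\underline{\delta})$, $u$ carries the tangential boundary condition $\nu\lrcorner u_{|_{\partial\Omega}}=0$, so plugging $u$ into \eqref{eq:solvability2} collapses the right-hand side to $\|u\|_r$, yielding the continuous embedding ${\rm{\sf N}}^r(D_{{}^\Vert})\hookrightarrow L^{r^*}(\Omega,\Lambda)$. The analogous argument with \eqref{eq:solvability3} handles $D_{{}^\bot}$ via the normal boundary condition $\nu\wedge u_{|_{\partial\Omega}}=0$.

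Second, since $r\in\bigl((2+\alpha)',2+\alpha\bigr)\subset(p_H,p^H)$ by the choice of $\alpha$, Theorem~\ref{thm:Hodge-dec-Lp} supplies the Hodge decomposition of $L^r(\Omega,\Lambda)$ and with it the bounded projection $P:L^r(\Omega,\Lambda)\to{\rm{\sf N}}^r(D)$. Composing with the embedding just proved gives the boundedness $P:L^r(\Omega,\Lambda)\to L^{r^*}(\Omega,\Lambda)$. The equality ${\rm{\sf N}}^r(D)={\rm{\sf N}}^{r^*}(D)$ then follows at once: $\supset$ holds because $L^{r^*}\hookrightarrow L^r$ on the bounded domain $\Omega$, while $\subset$ is exactly the embedding above, and the norms are equivalent by the closed graph theorem (or directly from the two bounds).

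Third, for $p\in J_\alpha$, I would write $p=(r^*)'$ for the unique $r\in\bigl((2+\alpha)',2+\alpha\bigr)$ supplied by the fact that $r\mapsto(r^*)'$ is a decreasing bijection onto $J_\alpha$. Since $P$ is the $L^2$-orthogonal projection onto the closed subspace ${\rm{\sf N}}(D)$, it is self-adjoint on $L^2(\Omega,\Lambda)$, so dualizing $P:L^r\to L^{r^*}$ yields $P:L^{(r^*)'}\to L^{r'}$, i.e.\ $P:L^p\to L^{r'}$. Because $r'\in\bigl((2+\alpha)',2+\alpha\bigr)$ as well, the norm equivalence ${\rm{\sf N}}^{r'}(D)={\rm{\sf N}}^{(r')^*}(D)$ from the second step applies; combined with Remark~\ref{rem:qS'}, which gives $(r')^*=\bigl((r^*)'\bigr)^S=p^S$, this upgrades the target space:
$$
\|Pu\|_{p^S}=\|Pu\|_{(r')^*}\lesssim\|Pu\|_{r'}\lesssim\|u\|_p
\qquad\text{for all }u\in L^p(\Omega,\Lambda)\cap L^2(\Omega,\Lambda),
$$
with $Pu\in{\rm{\sf N}}(D)\subset{\rm{\sf N}}^p(D)$; the statement on $L^p$ then follows by density.

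The main obstacle is the bookkeeping between $r$, $r^*$, $(r^*)'$, $r'$, and $(r')^*$: the duality step only gives the weaker target $L^{r'}$, and producing the full Sobolev improvement $L^{p^S}$ depends crucially on the $p$-independence of the null space, i.e.\ on the norm equivalence proved in the first half of the lemma, rather than on any direct Sobolev embedding at the $L^p$ level.
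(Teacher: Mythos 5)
Your argument is correct and follows essentially the same route as the paper: step one uses \eqref{eq:solvability2}/\eqref{eq:solvability3} on null vectors exactly as the paper does, step two is the paper's embedding argument for ${\rm{\sf N}}^r(D)={\rm{\sf N}}^{r^*}(D)$, and step three is the paper's duality plus $P=P'=P^2$ argument, merely phrased via the null-space norm equivalence (which encodes the boundedness of $P$ on the second factor) rather than as an explicit composition $P\circ P$. The exponent bookkeeping $(r')^*=\bigl((r^*)'\bigr)^S=p^S$ matches the paper's use of \eqref{eq:r*'S}, and the closing density remark is a harmless addition the paper leaves implicit.
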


\begin{proof}
Let $r\in\bigl((2+\alpha)',2+\alpha\bigr)$. The projection 
$P:L^r(\Omega,\Lambda)\to{\rm{\sf N}}^r(D)$ coming from the Hodge decomposition
\eqref{eq:HodgeLpVert} (or \eqref{eq:HodgeLpbot}) satisfies, thanks to
\eqref{eq:solvability2} (or \eqref{eq:solvability3}),
$$
\|Pu\|_{r^*}\le C\,\|Pu\|_r\le C'\,\|u\|_r
$$
since for $v\in{\rm{\sf N}}^r(D)$, we have that $dv=0$, $\delta v=0$ in 
$\Omega$ and $\nu\lrcorner\,v=0$ (or $\nu\wedge v=0$) on $\partial\Omega$. 
This proves that $P$ maps $L^r(\Omega,\Lambda)$ to $L^{r^*}(\Omega,\Lambda)$.

It is clear that ${\rm{\sf N}}^{r^*}(D)\hookrightarrow{\rm{\sf N}}^r(D)$ since we 
assumed that $\Omega$ was bounded. Conversely, let $v\in {\rm{\sf N}}^r(D)$. 
Then we have that $dv=0$, $\delta v=0$ in $\Omega$ and
$\nu\lrcorner\,v=0$ (or $\nu\wedge v=0$) on $\partial\Omega$, and thanks to
\eqref{eq:solvability2} (or \eqref{eq:solvability3}), $v\in L^{r^*}(\Omega,\Lambda)$
and $\|v\|_{r^*}\lesssim\|v\|_r$, which proves that
${\rm{\sf N}}^r(D)\hookrightarrow{\rm{\sf N}}^{r^*}(D)$ and therefore 
${\rm{\sf N}}^{r^*}(D)={\rm{\sf N}}^r(D)$ with equivalent norms. 

Let now $p\in J_\alpha$. 
We want to prove that $P$ maps $L^p(\Omega,\Lambda)$ to 
$L^{p^S}(\Omega,\Lambda)$. Since $P$ maps $L^r(\Omega,\Lambda)$
to $L^{r^*}(\Omega,\Lambda)$ for all $r\in\bigl((2+\alpha)',2+\alpha\bigr)$, its
adjoint maps $L^{\frac{nq}{n+q-1}}(\Omega,\Lambda)$ to $L^{q}(\Omega,\Lambda)$
for all $q\in\bigl((2+\alpha)',2+\alpha\bigr)$. We know moreover that $P$
is a projection, so that $P=P'=P^2$. Therefore, we obtain by composition that
$P$ maps $L^{\frac{nq}{n+q-1}}(\Omega,\Lambda)$ to 
$L^{\frac{nq}{n-1}}(\Omega,\Lambda)$ for all $q\in\bigl((2+\alpha)',2+\alpha\bigr)$.
If we let $p=\frac{nq}{n+q-1}$, we obtain that $p^S=\frac{nq}{n-1}$ and the
result is proved.
\end{proof}

To prove Theorem~\ref{thm:pHlip}, we need the following lemma which gives a
partial right inverse of $D_{{}^\Vert}$ (or $D_{{}^\bot}$) in $L^p(\Omega,\Lambda)$.

\begin{lemma}
\label{lem:inverseD}
Let $p\in J_\alpha$ ($J_\alpha$ was defined in Lemma~\ref{lem:nullspace}). 
Then any $u\in L^p(\Omega,\Lambda)$ can be decomposed as
\begin{equation}
\label{eq:decDT+K}
u=D_{{}^\Vert}Tu+Ku=D_{{}^\bot}Su+Lu
\end{equation}
where 
\begin{equation}
\label{eq:mapT}
T,S:L^p(\Omega,\Lambda)\to L^{p^S}(\Omega,\Lambda)\cap 
\begin{cases}
{\rm{\sf D}}^p(D_{{}^\Vert})\\
{\rm{\sf D}}^p(D_{{}^\bot})
\end{cases}
\end{equation}
and
\begin{equation}
\label{eq:mapK}
K,L:L^p(\Omega,\Lambda)\to L^{p^S}(\Omega,\Lambda)\cap 
\begin{cases}
{\rm{\sf N}}^p(D_{{}^\Vert})\\
{\rm{\sf N}}^p(D_{{}^\bot})
\end{cases}
\end{equation}
are bounded linear operators.
\end{lemma}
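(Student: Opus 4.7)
The plan is to take $K$ (and symmetrically $L$) directly from Lemma~\ref{lem:nullspace}, and to construct $T$ as the $L^2$-solution operator for $D_{{}^\Vert}$ on its range, quantifying the regularity gain via the potentials of Proposition~\ref{prop:propR,K}. The $D_{{}^\bot}$ case producing $S$ and $L$ follows by Hodge duality (Remark~\ref{rem:D||,DT}), so only the $D_{{}^\Vert}$ construction needs to be carried out in detail.

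Set $Ku:=Pu$, where $P$ is the $L^2$-orthogonal projection onto ${\rm{\sf N}}(D_{{}^\Vert})$ restricted to $L^p$; by Lemma~\ref{lem:nullspace}, $K$ maps $L^p(\Omega,\Lambda)$ to $L^{p^S}(\Omega,\Lambda)\cap {\rm{\sf N}}^p(D_{{}^\Vert})$ boundedly, which is \eqref{eq:mapK}. For $u\in L^p\cap L^2$ (dense in $L^p$), the $L^2$-Hodge decomposition yields $u-Ku=u_d+u_{\underline{\delta}}$ with $u_d\in {\rm{\sf R}}^2(d)$ and $u_{\underline{\delta}}\in {\rm{\sf R}}^2(\underline{\delta})$. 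The $L^2$ theory (Remark~\ref{rem:Hodge-dec-(H)}) then furnishes the unique $v_1\in {\rm{\sf R}}^2(\underline{\delta})\cap {\rm{\sf D}}^2(d)$ with $dv_1=u_d$ and the unique $v_2\in {\rm{\sf R}}^2(d)\cap {\rm{\sf D}}^2(\underline{\delta})$ with $\underline{\delta}v_2=u_{\underline{\delta}}$; I set $Tu:=v_1+v_2$. Because $d^2=\underline{\delta}^2=0$ gives ${\rm{\sf R}}(d)\subset {\rm{\sf N}}(d)$ and ${\rm{\sf R}}(\underline{\delta})\subset {\rm{\sf N}}(\underline{\delta})$, one automatically has $dv_2=0$ and $\underline{\delta}v_1=0$, hence $Tu\in {\rm{\sf D}}(D_{{}^\Vert})$ and $D_{{}^\Vert}Tu=dv_1+\underline{\delta}v_2=u-Ku$. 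This establishes \eqref{eq:decDT+K} on the dense subspace $L^p\cap L^2$.

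The main obstacle is the quantitative estimate $\|Tu\|_{p^S}\lesssim\|u\|_p$ together with the domain membership in ${\rm{\sf D}}^p(D_{{}^\Vert})$. For $p\in J_\alpha$, which may lie outside the Hodge range $((2+\alpha)',2+\alpha)$, the $L^2$-projections producing $u_d$ and $u_{\underline{\delta}}$ are not $L^p$-bounded, so the estimate cannot be obtained by merely plugging $u_d,u_{\underline{\delta}}$ into the bounds of Proposition~\ref{prop:propR,K}. The plan is instead to realise $v_1$ and $v_2$ directly through the potentials applied to $u$ itself: Proposition~\ref{prop:propR,K} gives $R_\Omega u, S_\Omega u\in L^{p^S}$ with $\|R_\Omega u\|_{p^S}+\|S_\Omega u\|_{p^S}\lesssim\|u\|_p$, and the corrections needed to push these into ${\rm{\sf R}}(\underline{\delta})$ and ${\rm{\sf R}}(d)$ are controlled by combining the MMT01 solvability bound \eqref{eq:solvability2}--\eqref{eq:solvability3} (valid at parameters in $(2-\alpha,2+\alpha)$) with the Besov embedding \eqref{eq:besov-in-L}, and with the finite-dimensional nulspace correction of Lemma~\ref{lem:nullspace}. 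The precise definition of $\alpha$ and of $J_\alpha$ is tailored so that a single iteration of this regularity bootstrap is enough to close the estimate. Density then extends $T$ to all of $L^p$, giving \eqref{eq:mapT}, and the $D_{{}^\bot}$ case producing $S$ and $L$ is identical via Hodge duality or equivalently via the dual potentials $T_\Omega$, $Q_\Omega$ introduced after Proposition~\ref{prop:propR,K}.
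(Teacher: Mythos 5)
Your construction of $K=P$ and of $T$ on $L^p\cap L^2$ (via the $L^2$ Hodge decomposition and the inverse of $D_{{}^\Vert}$ on its range) coincides with the paper's $T=({\rm I}-P)\widetilde{T}({\rm I}-P)$, $K=P$, and the verification of \eqref{eq:decDT+K} on $L^p\cap L^2$ is fine. You also correctly identify the genuine difficulty: for $p\in J_\alpha$ the $L^2$ Hodge projections are not $L^p$-bounded, so one cannot feed $u_d$ and $u_{\underline{\delta}}$ into Proposition~\ref{prop:propR,K}.

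However, the route you sketch to close the estimate has a real gap. You propose to start from $R_\Omega u$, $S_\Omega u$ and then ``push'' them into ${\rm{\sf R}}(\underline{\delta})$ and ${\rm{\sf R}}(d)$ respectively. But for a general $u\in L^p$ (not in ${\rm{\sf D}}^p(d)$), $R_\Omega u$ is not a potential of anything: the identity $dR_\Omega+R_\Omega d={\rm I}-K_\Omega$ only holds on ${\rm{\sf D}}^p(d)$, and $u-Ku$ need not lie in ${\rm{\sf R}}^p(d)$ either. Moreover, ``pushing'' a candidate into ${\rm{\sf R}}(\underline{\delta})$ amounts to subtracting its ${\rm{\sf N}}(d)$-component, which requires exactly the Hodge projections whose $L^p$-boundedness you do not have for $p\in J_\alpha$; the finite-dimensional nullspace correction in Lemma~\ref{lem:nullspace} handles only ${\rm{\sf N}}(D_{{}^\Vert})$, not ${\rm{\sf N}}(d)$. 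So ``a single iteration of this regularity bootstrap'' is not actually specified, and it is not clear that what you have written produces any bounded map $L^p\to L^{p^S}$.

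The paper's actual argument is different and the key device is missing from your sketch. First it applies the solvability estimate \eqref{eq:solvability2} directly to $Tu$ for $r\in\bigl((2+\alpha)',2+\alpha\bigr)$, giving $T:L^r\to L^{r^*}$, and dualises (using self-adjointness of $T$ in $L^2$) to obtain \eqref{eq:Tr*'-->r'}, i.e. $\|Tu\|_q\lesssim\|u\|_p$ where $p=(r^*)'$ and $q=r'=\frac{(n-1)p}{n-p}$. The crucial step is then to introduce the auxiliary whole-space solution $w\in\dot W^{1,p}({\mathbb{R}}^n,\Lambda)$ of $(d+\delta)w=1\!{\rm l}_\Omega({\rm I}-P)u$, which by $W^{1,p}$-regularity of the constant-coefficient Dirac operator on ${\mathbb{R}}^n$ has $\|w|_\Omega\|_{p^S}+\|w|_{\partial\Omega}\|_{L^{(n-1)p/(n-p)}(\partial\Omega)}\lesssim\|u\|_p$. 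Then $v:=Tu-w|_\Omega$ solves the \emph{homogeneous} problem $(d+\delta)v=0$ in $\Omega$ with $\nu\lrcorner\,v=-\nu\lrcorner\,w$ on $\partial\Omega$; the solvability estimate at the intermediate exponent $q=\frac{(n-1)p}{n-p}\in\bigl((2+\alpha)',2+\alpha\bigr)$, combined with $q^*=p^S$ and the already-proved bound $\|Tu\|_q\lesssim\|u\|_p$, yields $\|Tu\|_{p^S}\lesssim\|u\|_p$. Without the whole-space comparison solution $w$, there is no way to apply the MMT01 estimates (which control an interior Besov norm by interior $L^r$ data plus boundary trace data) to an object one can actually estimate, and I do not see how your version of the bootstrap closes.
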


\begin{proof}
Let $D:=D_{{}^\Vert}$ or $D_{{}^\bot}$.
Let $r\in\bigl((2+\alpha)',2+\alpha\bigr)$. We denote by ${\rm{\sf D}}^r(D)$ the 
domain and ${\rm{\sf R}}^r(D)$ the range of $D$, both endowed with the 
$L^r$-norm. We have that
$$
{\rm{\sf D}}^r(D)=\begin{cases}
{\rm{\sf D}}^r(d)\cap{\rm{\sf D}}^r(\underline{\delta})\quad\mbox{if } D=D_{{}^\Vert}
\\
{\rm{\sf D}}^r(\delta)\cap{\rm{\sf D}}^r(\underline{d})\quad\mbox{if } D=D_{{}^\bot}
\end{cases}
$$ 
and since the Hodge decompositions 
\eqref{eq:HodgeLpVert}--\eqref{eq:HodgeLpbot} hold in $L^r(\Omega,\Lambda)$, 
the projection onto the null space
of $D$, $P:L^r(\Omega,\Lambda)\to {\rm{\sf N}}^r(D)$, is bounded and the operator 
$D:{\rm{\sf D}}^r(D)\to{\rm{\sf R}}^r(D)$ is invertible; we denote by 
$\widetilde{T}:{\rm{\sf R}}^r(D)\to{\rm{\sf D}}^r(D)$ its inverse. Let $p:=(r^*)'$: 
$p$ belongs to $J_\alpha$, and $p^S=r^*$ by \eqref{eq:r*'S}. From now on, we 
assume that $D=D_{{}^\Vert}$ (the case $D=D_{{}^\bot}$ can be treated similarly). 
We define $T:=({\rm I}-P)\,\widetilde{T}\,({\rm I}-P)$ and $K:=P$. It is
clear that $T$ maps $L^r(\Omega,\Lambda)$ to itself and that, thanks to
\eqref{eq:solvability2}
$$
\|Tu\|_{r^*}\le C\,\bigl(\|Tu\|_r+\|dTu\|_r+\|\delta Tu\|_r\bigr)\le C\,\|u\|_r,
\quad \forall\,u\in L^r(\Omega,\Lambda),
$$
which proves also, by duality ($T$ is self-adjoint in $L^2(\Omega,\Lambda)$),
\begin{equation}
\label{eq:Tr*'-->r'}
\|Tu\|_r\le C\,\|u\|_{\frac{nr}{n+r-1}},\quad \forall\,r\in\bigl((2+\alpha)',2+\alpha\bigr).
\end{equation}
It remains to
prove that these operators $T$ and $K$ satisfy \eqref{eq:decDT+K} and 
the mapping properties \eqref{eq:mapT} and \eqref{eq:mapK}. The fact that
$K=P$ satisfies \eqref{eq:mapK} is a direct consequence 
of Lemma~\ref{lem:nullspace}. Next, let 
$u\in L^r(\Omega,\Lambda)$, $(2+\alpha)'<r<2+\alpha$. Since $D_{{}^\Vert}P=0$
and $D_{{}^\Vert}\widetilde{T}v=v$ for all $v\in {\rm{\sf R}}^r(D_{{}^\Vert})$, 
we have that
$$
D_{{}^\Vert}Tu=D_{{}^\Vert}\widetilde{T}({\rm I}-P)u=({\rm I}-P)u=u-Ku,
$$
which proves \eqref{eq:decDT+K} for $u\in L^r(\Omega,\Lambda)$. 
The last step in this proof is to show that $T$ maps $L^p(\Omega,\Lambda)$ 
to $L^{p^S}(\Omega,\Lambda)\cap{\rm{\sf D}}^p(D_{{}^\Vert})$ for all 
$p\in J_\alpha$.
Let $u\in L^2(\Omega,\Lambda)\cap L^p(\Omega,\Lambda)$ and denote by
$w\in \dot W^{1,p}({\mathbb{R}}^n,\Lambda)$ the solution of 
$$
(d+\delta)u=\begin{cases}
({\rm I}-P)u\quad\mbox{in }\Omega\\
0\quad\mbox{outside }\Omega
\end{cases}
\in L^p({\mathbb{R}}^n,\Lambda).
$$
We have that 
$\|w_{|_{\Omega}}\|_{p^S}+
\|w_{|_{\partial\Omega}}\|_{L^{\frac{(n-1)p}{n-p}}(\partial\Omega,\Lambda)}
\le C\,\|u\|_p$.
Let now $v:=Tu-w_{|_{\Omega}}$: $v$ satisfies 
$$
\begin{cases}
(d+\delta)v=0\quad\mbox{in }\Omega,\\
\nu\lrcorner\,v=-\nu\lrcorner\,w\in B^{p,p}_{1-1/p}(\partial\Omega,\Lambda)
\hookrightarrow L^{\frac{(n-1)p}{n-p}}(\partial\Omega,\Lambda).
\end{cases}
$$
Let $q=\frac{(n-1)p}{n-p}$, so that $q^*=\frac{np}{n-p}=p^S$; in particular,
$q\in \bigl((2+\alpha)',2+\alpha\bigr)$.
By \eqref{eq:solvability2}, since $dv+\delta v=0$, we have that
$$
\|v\|_{q^*}\le C\,\bigl(\|v\|_q+\|\nu\lrcorner\,v\|_{L^q(\partial\Omega,\Lambda)}\bigr)
$$
and therefore, using \eqref{eq:Tr*'-->r'} and the fact that $\frac{nq}{n+q-1}=p$
$$
\|Tu\|_{p^S}\lesssim \bigl(\|Tu\|_q+\|u\|_p\bigr) \lesssim \|u\|_p,
$$
which ends the proof.
\end{proof}

\begin{corollary}
\label{cor:pHlip}
Let $\Omega\subset{\mathbb{R}}^n$ be a bounded strongly Lipschitz domain.
Then the operators $D_{{}^\Vert}$ and $D_{{}^\bot}$ admit a bounded 
holomorphic functional calculus on $L^p(\Omega,\Lambda)$ for all 
$p$ in the interval $\bigl(((2+\alpha)^*)',(2+\alpha)^*\bigr)$.
\end{corollary}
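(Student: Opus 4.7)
The plan is to derive this corollary as an immediate consequence of Theorem~\ref{thm:pHlip} together with Theorem~\ref{thm:mainThm4.1}\eqref{5.1(i)} (plus Remark~\ref{rem:D||,DT} to handle $D_{{}^\bot}$).

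First I would invoke Theorem~\ref{thm:pHlip} to read off the bounds $p_H\le((2+\alpha)^*)'$ and $p^H\ge(2+\alpha)^*$ on the Hodge exponents of the bounded strongly Lipschitz domain $\Omega$. Combining these, for any $p$ in the open interval $\bigl(((2+\alpha)^*)',(2+\alpha)^*\bigr)$ one has the strict chain
$$
p_H\le((2+\alpha)^*)'<p<(2+\alpha)^*\le p^H,
$$
so $p$ lies strictly inside the Hodge range $(p_H,p^H)$.

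Since a bounded strongly Lipschitz domain is in particular very weakly Lipschitz, I would then apply Theorem~\ref{thm:mainThm4.1}\eqref{5.1(i)} to conclude that $D_{{}^\Vert}$ is bisectorial of angle $0$ in $L^p(\Omega,\Lambda)$ with a bounded $S_\mu^\circ$ holomorphic functional calculus for every $\mu\in\bigl(0,\tfrac{\pi}{2}\bigr)$. The same conclusion for $D_{{}^\bot}$ follows from Remark~\ref{rem:D||,DT}: the theory for $D_{{}^\bot}$ runs parallel to that of $D_{{}^\Vert}$ via the Hodge star operator, and the two share the same Hodge exponents, as recorded at the end of the proof of Theorem~\ref{thm:Hodge-dec-Lp}.

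No genuine obstacle arises at this stage: the argument is merely a chaining of two previously established results. The substantive work has been done upstream in Theorem~\ref{thm:pHlip}, whose proof rests on the partial right inverses of $D_{{}^\Vert}$ and $D_{{}^\bot}$ constructed in Lemma~\ref{lem:inverseD}, together with the boundary-regularity estimate \eqref{eq:solvability} from \cite{MMT01}; it is there that the strong Lipschitz hypothesis is genuinely used.
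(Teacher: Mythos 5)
Your proposal is circular. You derive Corollary~\ref{cor:pHlip} from Theorem~\ref{thm:pHlip}, but in the paper the logical dependency runs the other way: the proof of Theorem~\ref{thm:pHlip} reads, verbatim, ``It is an immediate consequence of Corollary~\ref{cor:pHlip} and \eqref{5.1(ii)} of Theorem~\ref{thm:mainThm4.1}.'' In other words, the estimates $p_H\le((2+\alpha)^*)'$ and $p^H\ge(2+\alpha)^*$ are obtained precisely by first establishing the bounded holomorphic functional calculus on the interval $\bigl(((2+\alpha)^*)',(2+\alpha)^*\bigr)$ (Corollary~\ref{cor:pHlip}) and then invoking the converse direction, Theorem~\ref{thm:mainThm4.1}\eqref{5.1(ii)}, which says bounded functional calculus in $L^p$ forces $p_H<p<p^H$. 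Your closing remark that ``the substantive work has been done upstream in Theorem~\ref{thm:pHlip}, whose proof rests on the partial right inverses $\ldots$'' misreads the structure: Theorem~\ref{thm:pHlip}'s proof does not touch Lemma~\ref{lem:inverseD} at all; it is Corollary~\ref{cor:pHlip} that does.

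What the paper actually does to prove Corollary~\ref{cor:pHlip} is run the extrapolation machinery directly, and this cannot be avoided. One takes $A=B=D_{{}^\Vert}$ (or $D_{{}^\bot}$) and $X_p=L^p(\Omega,\Lambda)$, uses the potentials $(T,K)$ (resp. $(S,L)$) from Lemma~\ref{lem:inverseD} --- which rely on the strong Lipschitz estimate \eqref{eq:solvability} --- to verify the decomposition hypotheses \eqref{condA} of Theorem~\ref{thm:abstractThm} and \eqref{condB} of Theorem~\ref{thm:abstractThm2}, and then iterates downward from $q=2$ exactly as in the proof of Theorem~\ref{thm:mainThm4.1}. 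Note that this is genuinely more than Theorem~\ref{thm:mainThm4.1}\eqref{5.1(i)} gives, because there the conclusion holds only on the subranges ${\rm{\sf R}}^r(d)$ and ${\rm{\sf R}}^r(\underline{\delta})$ when $r$ lies below $p_H$, whereas here one gets the functional calculus on all of $L^p(\Omega,\Lambda)$ for $p$ in the larger interval; the difference is that Lemma~\ref{lem:inverseD} supplies a right inverse of $D_{{}^\Vert}$ (not just of $d$ or $\underline\delta$) on the whole space, so $X_p$ can be taken to be $L^p(\Omega,\Lambda)$ rather than a range space. To repair your proof you must replace the appeal to Theorem~\ref{thm:pHlip} with this direct verification of conditions \eqref{condA} and \eqref{condB}.
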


\begin{proof}
The proof follows the lines of the proof of Theorem~\ref{thm:mainThm4.1}.
Conditions \eqref{condA} of Theorem~\ref{thm:abstractThm} and \eqref{condB} 
of Theorem~\ref{thm:abstractThm2}
hold for $X_p=L^p(\Omega,\Lambda)$ and $A=B=D_{{}^\Vert}$ (or $D_{{}^\bot}$),
using the potentials $(T,K)$ (or $(S,L)$) defined in Lemma~\ref{lem:inverseD}.  
\end{proof}

\begin{proof}[Proof of Theorem~\ref{thm:pHlip}]
It is an immediate consequence of Corollary~\ref{cor:pHlip} and \eqref{5.1(ii)} of
Theorem~\ref{thm:mainThm4.1}.
\end{proof}

\section{Hodge-Laplacian and Hodge-Stokes operators}

Direct applications of the results in Section~\ref{sec:Dirac} are the following
properties of the Hodge-Laplacian $-\Delta_{{}^\Vert}={D_{{}^\Vert}}^2$ 
and the Hodge-Stokes operator $S_{{}^\Vert}$ defined as the part of 
$-\Delta_{{}^\Vert}$ in ${\rm{\sf N}}^2(\underline{\delta})$ extended as sectorial 
operators in $L^p(\Omega,\Lambda)$ and in ${\rm{\sf N}}^p(\underline{\delta})$. 

\begin{corollary}
\label{cor:HodgeLaplacian}
Suppose $\Omega$ is a very weakly Lipschitz domain in ${\mathbb{R}}^n$.
Define $-\Delta_{{}^\Vert}={D_{{}^\Vert}}^2$ in $L^2(\Omega,\Lambda)$. 
If $p_H<p<p^H$, then $-\Delta_{{}^\Vert}$ is sectorial of angle $0$ in 
$L^p(\Omega,\Lambda)$ and for all $\mu\in (0,\frac{\pi}{2})$, $-\Delta_{{}^\Vert}$
admits a bounded $S_{\mu +}^\circ$ holomorphic functional calculus in
$L^p(\Omega,\Lambda)$.
\end{corollary}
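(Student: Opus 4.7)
The plan is to deduce Corollary~\ref{cor:HodgeLaplacian} directly from Theorem~\ref{thm:mainThm4.1} by the standard squaring principle, which turns a bisectorial operator with bounded holomorphic functional calculus into a sectorial operator with bounded holomorphic functional calculus on the doubled sector. Theorem~\ref{thm:mainThm4.1}~\eqref{5.1(i)} supplies, for $p_H<p<p^H$, that $D_{{}^\Vert}$ is bisectorial of angle $0$ in $L^p(\Omega,\Lambda)$ and admits a bounded $S_\nu^\circ$ holomorphic functional calculus for every $\nu\in(0,\tfrac{\pi}{2})$. Since $-\Delta_{{}^\Vert}=D_{{}^\Vert}^{\,2}$, the spectral mapping yields $\sigma(-\Delta_{{}^\Vert})\subset[0,\infty)=S_{0+}$, the spectral inclusion demanded by Definition~\ref{def:sect}.

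For the sectorial resolvent estimate I would exploit the partial fraction
\[
(\lambda I-D_{{}^\Vert}^{\,2})^{-1}=\bigl(\sqrt{\lambda}\,I-D_{{}^\Vert}\bigr)^{-1}\bigl(\sqrt{\lambda}\,I+D_{{}^\Vert}\bigr)^{-1}.
\]
Fix $\theta\in(0,\pi)$ and choose any $\nu\in(0,\theta/2)$. For $\lambda$ with $|\arg\lambda|>\theta$, a short argument check shows that the principal branch $\sqrt\lambda$ and its negative both lie outside the bisector $S_\nu$. Rewriting the bisectoriality bound of Theorem~\ref{thm:mainThm4.1}~\eqref{5.1(i)} in resolvent form gives $\|(\pm\sqrt\lambda\,I-D_{{}^\Vert})^{-1}\|_{{\mathscr{L}}(L^p)}\lesssim |\sqrt\lambda|^{-1}$, and multiplying produces $\|(\lambda I-D_{{}^\Vert}^{\,2})^{-1}\|_{{\mathscr{L}}(L^p)}\lesssim|\lambda|^{-1}$. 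Translating back into the $(I+z(-\Delta_{{}^\Vert}))^{-1}$ formulation delivers precisely Definition~\ref{def:sect} with $\omega=0$.

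For the bounded $S_{\mu+}^\circ$ holomorphic functional calculus with $\mu\in(0,\tfrac{\pi}{2})$, given $f\in\Psi(S_{\mu+}^\circ)$ I would introduce $h(z):=f(z^{2})$. The squaring map sends the bisector $S_{\mu/2}^\circ$ onto the sector $S_{\mu+}^\circ$, so $h$ is holomorphic on $S_{\mu/2}^\circ$; the decay $|f(w)|\lesssim |w|^{s}/(1+|w|^{2s})$ on $S_{\mu+}^\circ$ becomes $|h(z)|\lesssim |z|^{2s}/(1+|z|^{4s})$ on $S_{\mu/2}^\circ$, placing $h$ in $\Psi(S_{\mu/2}^\circ)$ as required by Remark~\ref{rem:fcPsi}. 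Since $\mu/2<\tfrac{\pi}{2}$, Theorem~\ref{thm:mainThm4.1}~\eqref{5.1(i)} applies and yields
\[
\|h(D_{{}^\Vert})\|_{{\mathscr{L}}(L^p)}\le K_{\mu/2}\,\|h\|_{L^\infty(S_{\mu/2}^\circ)}=K_{\mu/2}\,\|f\|_{L^\infty(S_{\mu+}^\circ)}.
\]
The composition identity $h(D_{{}^\Vert})=f(D_{{}^\Vert}^{\,2})=f(-\Delta_{{}^\Vert})$, verified by substituting $w=z^{2}$ in the Dunford contour integral defining $h(D_{{}^\Vert})$ (using Remarks~\ref{rem:fcPsi} and~\ref{rem:fcPsi-sect}), then gives the claimed bound on $f(-\Delta_{{}^\Vert})$.

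I do not anticipate a serious obstacle. The only minor technicalities are the bookkeeping of arguments needed to confirm that $\pm\sqrt\lambda$ avoid the bisector $S_\nu$ and the justification of the Dunford-contour change of variable underlying $f(D_{{}^\Vert}^{\,2})=h(D_{{}^\Vert})$; both are routine and standard in the abstract theory of functional calculi for bisectorial operators.
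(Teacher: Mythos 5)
Your proposal is correct and is essentially the argument the authors intend: the paper states Corollary~\ref{cor:HodgeLaplacian} without a written proof, describing it only as a ``direct application'' of Theorem~\ref{thm:mainThm4.1}, and your squaring argument (partial-fraction decomposition of the resolvent of $D_{{}^\Vert}^{\,2}$ for sectoriality, together with $f(D_{{}^\Vert}^{\,2})=h(D_{{}^\Vert})$ for $h(z)=f(z^2)\in\Psi(S_{\mu/2}^\circ)$ for the functional calculus) is the standard route and is exactly what is being invoked implicitly.
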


Let us mention that the first part of this corollary (sectoriality of $-\Delta_{{}^\Vert}$) 
has been proved in \cite{MM09a} in the case of a bounded strongly Lipschitz 
domain.

\begin{corollary}
\label{cor:HodgeStokes}
Suppose $\Omega$ is a very weakly Lipschitz domain in ${\mathbb{R}}^n$.
Define $S_{{}^\Vert}:={D_{{}^\Vert}}^2$ in 
${\rm{\sf R}}^2(\underline{\delta},\Omega)$. If $\max\bigl\{1,(p_H)_S\bigr\}<p<p^H$, 
then $S_{{}^\Vert}$ is sectorial of angle $0$ in ${\rm{\sf R}}^p(\underline{\delta})$ 
and for all $\mu\in (0,\frac{\pi}{2})$, $S_{{}^\Vert}$ admits a bounded 
$S_{\mu +}^\circ$ holomorphic functional calculus in 
${\rm{\sf R}}^p(\underline{\delta})$.
\end{corollary}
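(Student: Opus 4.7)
The plan is to deduce the corollary from the bounded holomorphic functional calculus of the bisectorial Hodge--Dirac operator $D_{{}^\Vert}$ on ${\rm{\sf R}}^p(\underline{\delta},\Omega)$, provided by Theorem~\ref{thm:mainThm4.1} \eqref{5.1(iii)}, via the standard bisectorial-to-sectorial squaring correspondence. First I would check that $S_{{}^\Vert}={D_{{}^\Vert}}^2$ preserves ${\rm{\sf R}}^p(\underline{\delta})$: since $\underline{\delta}^2=0$ and ${\rm{\sf R}}^p(\underline{\delta})$ is closed, ${\rm{\sf R}}^p(\underline{\delta})\subseteq {\rm{\sf N}}^p(\underline{\delta})$, so on this subspace $D_{{}^\Vert}$ reduces to $d$ and sends it into ${\rm{\sf R}}^p(d)\subseteq {\rm{\sf N}}^p(d)$, where in turn $D_{{}^\Vert}$ reduces to $\underline{\delta}$ and sends the image back into ${\rm{\sf R}}^p(\underline{\delta})$; hence $S_{{}^\Vert}|_{{\rm{\sf R}}^p(\underline{\delta})}=\underline{\delta}d$.

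Next, for $p_H<p<p^H$ and $u\in{\rm{\sf R}}^p(\underline{\delta})$, I would decompose $({\rm I}\pm zD_{{}^\Vert})^{-1}u=v_1^\pm+v_2^\pm$ according to the Hodge decomposition \eqref{eq:HodgeLpVert}, with $v_1^\pm\in{\rm{\sf R}}^p(d)$ and $v_2^\pm\in{\rm{\sf R}}^p(\underline{\delta})$ (the ${\rm{\sf N}}(D_{{}^\Vert})$-component vanishes because $D_{{}^\Vert}$ kills that subspace and $u$ has no component there). Matching components in $({\rm I}\pm zD_{{}^\Vert})v^\pm=u$ gives $v_1^\pm=\mp zd\,v_2^\pm$ and $({\rm I}-z^2\underline{\delta}d)v_2^\pm=u$, hence $v_2^+=v_2^-$ and $v_1^+=-v_1^-$. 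Consequently
$$
\tfrac{1}{2}\bigl[({\rm I}+zD_{{}^\Vert})^{-1}+({\rm I}-zD_{{}^\Vert})^{-1}\bigr]u=v_2^+\in{\rm{\sf R}}^p(\underline{\delta})
$$
and this coincides with $({\rm I}-z^2S_{{}^\Vert})^{-1}u$. Applying \eqref{eq:est_resRVert} to each resolvent yields the sectorial resolvent estimate for $w=-z^2$, showing that $S_{{}^\Vert}$ is sectorial of angle $0$ in ${\rm{\sf R}}^p(\underline{\delta})$.

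For the holomorphic functional calculus, given $f\in\Psi(S_{\mu+}^\circ)$ with $\mu\in(0,\tfrac{\pi}{2})$, set $g(z):=f(z^2)$; a direct check shows $g\in\Psi(S_{\mu/2}^\circ)$ with $\|g\|_{L^\infty(S_{\mu/2}^\circ)}=\|f\|_{L^\infty(S_{\mu+}^\circ)}$. Since $g$ is even, its Cauchy integral over the bisector boundary groups resolvents of $D_{{}^\Vert}$ into the symmetric combinations of the previous paragraph, so $g(D_{{}^\Vert})$ maps ${\rm{\sf R}}^p(\underline{\delta})$ into itself and, by the composition rule of the holomorphic functional calculus, equals $f(S_{{}^\Vert})$. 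The estimate \eqref{eq:est_fcRVert} then gives $\|f(S_{{}^\Vert})u\|_p=\|g(D_{{}^\Vert})u\|_p\le K_{p,\mu/2}\|f\|_{L^\infty(S_{\mu+}^\circ)}\|u\|_p$ for all $u\in{\rm{\sf R}}^p(\underline{\delta})$.

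The main point to secure is the invariance of ${\rm{\sf R}}^p(\underline{\delta})$ under these operators; the explicit Hodge-decomposition computation above handles this when $p_H<p<p^H$. To extend to the full range $\max\{1,(p_H)_S\}<p<p^H$ (where \eqref{eq:HodgeLpVert} need not hold), I would use the density of ${\rm{\sf R}}^q(\underline{\delta})$ in ${\rm{\sf R}}^p(\underline{\delta})$ for $p<q<p^H$ from Corollary~\ref{cor:complex-interpolation-scale} \eqref{4.2(vi)}, together with the uniform $L^p$-bounds from Theorem~\ref{thm:mainThm4.1} \eqref{5.1(iii)}, to pass to the limit.
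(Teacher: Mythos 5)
Your proof is correct, and since the paper gives no proof of Corollary~\ref{cor:HodgeStokes} beyond calling it a ``direct application'' of the results of Section~\ref{sec:Dirac}, your argument is the natural fleshing-out of that assertion. A few points worth being explicit about, which your sketch glosses over but which are routine: in the Hodge-decomposition computation one needs that the projections ${\mathcal P}_{{\rm{\sf R}}^p(d)}$ and ${\mathcal P}_{{\rm{\sf R}}^p(\underline{\delta})}$ preserve ${\rm{\sf D}}^p(D_{{}^\Vert})$ (which follows because ${\rm{\sf R}}^p(d)\subset{\rm{\sf N}}^p(d)$ and ${\rm{\sf R}}^p(\underline{\delta})\subset{\rm{\sf N}}^p(\underline{\delta})$, as you note, so the three summands lie automatically in the domain of $d$ and of $\underline{\delta}$); and for the range $\max\{1,(p_H)_S\}<p\le p_H$, the invariance of ${\rm{\sf R}}^p(\underline{\delta})$ under the symmetric resolvent combination is inherited from the case $q\in(p_H,p^H)$ via Corollary~\ref{cor:complex-interpolation-scale}\eqref{4.2(v)}--\eqref{4.2(vi)} (${\rm{\sf R}}^q(\underline{\delta})={\rm{\sf R}}^p(\underline{\delta})\cap L^q$ is dense in the closed subspace ${\rm{\sf R}}^p(\underline{\delta})$), exactly as you indicate. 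For the functional calculus, writing $g(D_{{}^\Vert})$ via the contour integral and pairing $z\leftrightarrow -z$ on the symmetric double-sector contour reduces the integrand to the same symmetric resolvent combination, which secures the invariance of ${\rm{\sf R}}^p(\underline{\delta})$ under $g(D_{{}^\Vert})$; your appeal to the evenness of $g$ is the right idea, and combined with the estimate \eqref{eq:est_fcRVert} this closes the argument.
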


\section{General $L^p$ extrapolation results}
\label{sec:general}

In Section~\ref{sec:Dirac}, we used the following extrapolation results, 
but are presenting them separately, as they are general results which could 
be useful in other contexts. In them, $L^p(\Omega):=L^p(\Omega,{\mathbb{C}}^N)$, 
where $\Omega$ is an open subset of ${\mathbb{R}}^n$, and $N$ is a positive 
integer.

\begin{theorem}
\label{thm:abstractThm}
Let $q\in [1,\infty)$, $\max\{1,q_S\}\le p<q$ and $0\le\omega<\mu<\frac{\pi}{2}$.
Let $A$ be a bisectorial operator of angle $\omega$ in $L^q$ such that 
the family of the resolvents 
$\bigl\{({\rm I}\,+z A)^{-1},z\in{\mathbb{C}}\setminus S_\mu\bigr\}$
has $L^q-L^q$ off-diagonal bounds. Assume that $B$ is an unbounded 
operator in $L^q$ such that 
$\bigl\{({\rm I}\,+z A)^{-1}zB,z\in{\mathbb{C}}\setminus S_\mu\bigr\}$
has $L^q-L^q$ off-diagonal bounds.
\begin{enumerate}[(A)]
\item
\label{condA}
Assume that $X_p$ is a closed subspace of $L^p(\Omega)$ such that 
for all $u\in X_p$, there exist $w,v\in L^q(\Omega)$ with $w\in {\rm{\sf D}}^p(B)$,
$\|w\|_q,\|v\|_q\lesssim \|u\|_p$ and $u=Bw+v$. 

\noindent
Moreover, assume that
for each $t\in(0,{\rm diam}\,\Omega]$ there exists a family 
$\bigl\{Q_k^t,k\in{\mathbb{Z}}^n\bigr\}$ of open subsets of $\Omega$ 
with the property that 
\begin{align*}
&|Q_k^t|\lesssim t^n,\quad 
1\!{\rm l}_{\Omega}\le \sum_k1\!{\rm l}_{Q_k^t}\le 
N\,1\!{\rm l}_{\Omega}, \\
&\sup_j\sum_ke^{-\varepsilon\,{\rm dist}\,(Q_k^t,Q_j^t)/t}
=\sup_k\sum_je^{-\varepsilon\,{\rm dist}\,(Q_k^t,Q_j^t)/t}\le C_\varepsilon
\end{align*}
for all $\varepsilon>0$, where $C_\varepsilon$ does not depend on $t$,
and for all $u\in X_p$, there exist $w_k,v_k\in L^q(\Omega)$ 
such that $w_k\in {\rm{\sf D}}^p(B)$ for all $k$, and $w_k, v_k$ satisfy
$$
{\rm sppt}_\Omega w_k,{\rm sppt}_\Omega\,v_k\subset Q_k^t,
\quad
\|w_k\|_q,\|v_k\|_q\lesssim t^{1-n(\frac{1}{p}-\frac{1}{q})}\|1\!{\rm l}_{Q_k^t}u\|_p,
\quad
u=\sum_k \bigl(Bw_k+\textstyle{\frac{1}{t}}v_k\bigr).
$$
\end{enumerate}
Then there exists a constant $M_{p,\mu}$ such that 
$$
\bigl\|({\rm I}\,+zA)^{-1}u\bigr\|_p\le M_{p,\mu} \|u\|_p,\quad
\forall\,z\in{\mathbb{C}}\setminus S_\mu,\ \forall\, u\in X_p\cap L^q(\Omega)\ .
$$
\end{theorem}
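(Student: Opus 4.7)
The proof will split on the size of $|z|$, with threshold $t_0 := \mathrm{diam}\,\Omega$. For $|z| > t_0$ I fall back on the global splitting $u = Bw + v$ from (A). Writing $(I+zA)^{-1}u = \frac{1}{z}[(I+zA)^{-1}(zB)]w + (I+zA)^{-1}v$, the hypothesised $L^q$--$L^q$ off-diagonal bounds for the two families (which in particular give $L^q$-boundedness with constants independent of $z$) yield $\|(I+zA)^{-1}u\|_q \lesssim (1+|z|^{-1})\|u\|_p \lesssim \|u\|_p$, and Hölder on the bounded set $\Omega$ converts this to the desired $L^p$ bound.

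The heart of the argument is the range $0<|z|\le t_0$. Set $t=|z|$ and apply (A) at this scale to write $u=\sum_k(Bw_k+\tfrac1t v_k)$ with $w_k,v_k$ supported in $Q_k^t$ and $\|w_k\|_q,\|v_k\|_q \lesssim t^{1-n(1/p-1/q)}\|\mathbf 1_{Q_k^t}u\|_p$. Applying $(I+zA)^{-1}$ termwise, using $(I+zA)^{-1}Bw_k = \frac{1}{z}[(I+zA)^{-1}(zB)]w_k$ together with the off-diagonal bounds for both families, and localising with $\mathbf 1_{Q_j^t}$, I obtain
$$
\|\mathbf 1_{Q_j^t}(I+zA)^{-1}u\|_q \;\lesssim\; \frac{1}{t}\sum_k e^{-c\,\mathrm{dist}(Q_j^t,Q_k^t)/t}\bigl(\|w_k\|_q+\|v_k\|_q\bigr) \;\lesssim\; t^{-n(1/p-1/q)}\sum_k e^{-c\,\mathrm{dist}(Q_j^t,Q_k^t)/t}\|\mathbf 1_{Q_k^t}u\|_p.
$$
Here the scaling $t^{1-n(1/p-1/q)}$ built into (A) precisely cancels the $1/t$ picked up from the resolvent--$zB$ composition, and the condition $p\ge q_S$ is exactly what guarantees the exponent $1-n(1/p-1/q)\ge 0$.

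Since $|Q_j^t|\lesssim t^n$ and $p\le q$, Hölder on each $Q_j^t$ gives $\|\mathbf 1_{Q_j^t}(I+zA)^{-1}u\|_p \lesssim t^{n(1/p-1/q)}\|\mathbf 1_{Q_j^t}(I+zA)^{-1}u\|_q$, cancelling the remaining powers of $t$ and yielding
$$
\|\mathbf 1_{Q_j^t}(I+zA)^{-1}u\|_p \;\lesssim\; \sum_k e^{-c\,\mathrm{dist}(Q_j^t,Q_k^t)/t}\,\|\mathbf 1_{Q_k^t}u\|_p.
$$
Raising to the $p$-th power, summing in $j$, and applying Jensen's inequality (equivalently a discrete Schur/Young estimate) with the uniform-in-$t$ bound $\sup_j \sum_k e^{-c\,\mathrm{dist}(Q_j^t,Q_k^t)/t}\le C_c$ from (A), I conclude
$$
\|(I+zA)^{-1}u\|_p^p \;\le\; \sum_j \|\mathbf 1_{Q_j^t}(I+zA)^{-1}u\|_p^p \;\lesssim\; \sum_k \|\mathbf 1_{Q_k^t}u\|_p^p \;\le\; N\,\|u\|_p^p,
$$
using the finite overlap $\sum_k \mathbf 1_{Q_k^t}\le N\mathbf 1_\Omega$.

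The one delicate point I expect to need care on is the interpretation of $(I+zA)^{-1}B$: the off-diagonal hypothesis is stated for the composition $(I+zA)^{-1}(zB)$ as an $L^q$-operator, so I need $w_k\in{\textsf D}^p(B)\cap L^q$ (as furnished by (A)) to justify the identity $(I+zA)^{-1}Bw_k = \frac{1}{z}[(I+zA)^{-1}(zB)]w_k$ and thereby pass the scale-$\frac{1}{|z|}$ factor to the off-diagonally bounded family. Once this is clean, the remainder is bookkeeping: matching exponents from the Sobolev-type scaling in (A) against those arising from Hölder and from the factor $1/|z|$, and invoking the uniform Schur bound on the exponential kernel.
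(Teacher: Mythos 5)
Your proposal matches the paper's proof essentially step for step: the split at $|z|$ versus ${\rm diam}\,\Omega$, the global decomposition $u=Bw+v$ plus H\"older for large $|z|$, and for small $|z|$ the local decomposition at scale $t=|z|$, termwise application of the resolvent, H\"older on each $Q_j^t$, the Schur-type bound from the summability of $e^{-c\,{\rm dist}(Q_j^t,Q_k^t)/t}$, and the finite-overlap property to conclude. The subtlety you flag about interpreting $({\rm I}+zA)^{-1}Bw_k$ via the $L^q$-bounded closure of $({\rm I}+zA)^{-1}(zB)$ is a fair observation; the paper treats this implicitly and your handling (using $w_k\in{\rm{\sf D}}^p(B)\cap L^q$) is the right one.
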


\begin{proof}
For $z\in{\mathbb{C}}\setminus S_\mu$,
let $t=\min\{|z|,{\rm diam}\,\Omega\}\in(0,{\rm diam}\,\Omega]$,
$u\in X_p$. 

\noindent
If $t={\rm diam}\,\Omega$, then let $w$ and $v$ be as in
the first part of Assumption \eqref{condA}: $u=Bw+v$, and therefore
$({\rm I}\,+zA)^{-1}u=\frac{1}{z} ({\rm I}\,+zA)^{-1}zBw+({\rm I}\,+zA)^{-1}v$,
so that, thanks to the boundedness of $({\rm I}\,+zA)^{-1}$ and 
$({\rm I}\,+zA)^{-1}zB$ in $L^q(\Omega)$,
$$
\|({\rm I}\,+zA)^{-1}u\|_p
\lesssim ({\rm diam}\,\Omega)^{n(\frac{1}{p}-\frac{1}{q})}
\Bigl(\frac{1}{{\rm diam}\,\Omega}\,\|w\|_q+\|v\|_q \Bigr)\lesssim \|u\|_p.
$$

\noindent
If $t<{\rm diam}\,\Omega$, then let $Q_k^t,w_k,v_k$ 
as in the statement of the theorem. 
Then, using the $L^q-L^q$ off diagonal bounds for $({\rm I}\,+zA)^{-1}$ and
$({\rm I}\,+zA)^{-1}zB$ we have that for all $u\in X_p\cap L^q$
\begin{align*}
\|({\rm I}\,+zA)^{-1}u\|_p
&\le\Bigl(\sum_j\int_{Q_j^t}|({\rm I}\,+zA)^{-1}u|^p\Bigr)^{\frac{1}{p}}
\\
&\lesssim\Bigl[\sum_j\Bigl(\|({\rm I}\,+zA)^{-1}u\|_{L^q(Q_j^t)}
|Q_j^t|^{\frac{1}{p}-\frac{1}{q}}\Bigr)^p\Bigr]^{\frac{1}{p}}
\qquad \mbox{\small (by H\"older's inequality)}
\\
&\lesssim\Bigl[\sum_j\Bigl(\sum_k\|({\rm I}\,+zA)^{-1}(tBw_k+v_k)\|_{L^q(Q_j^t)}\,
t^{-1+n(\frac{1}{p}-\frac{1}{q})}\Bigr)^p\Bigr]^{\frac{1}{p}}
\\
&\hspace{4cm} \mbox{\small (since $u=\sum_k(Bw_k+\frac{1}{t}v_k)$ and
$|Q_j^t|\lesssim t^n$)}
\\
&\lesssim\Bigl[\sum_j\Bigl(\sum_k e^{-c\,{\rm dist}\,(Q_j^t,Q_k^t)/|z|}
\bigl(\textstyle{\frac{t}{|z|}}\,\|w_k\|_q+\|v_k\|_q\bigr)\,
t^{-1+n(\frac{1}{p}-\frac{1}{q})}\Bigr)^p\Bigr]^{\frac{1}{p}}
\\
&\hspace{4cm} \mbox{\small (by off-diagonals bounds)}
\\
&\lesssim C_c\Bigl[ \sum_k \Bigl(
\bigl(\textstyle{\frac{t}{|z|}}\,\|w_k\|_q+\|v_k\|_q\bigr)\,t^{-1+n(\frac{1}{p}-\frac{1}{q})}
\Bigr)^p\Bigr]^{\frac{1}{p}}
\\
&\hspace{4cm} \mbox{\small (by Schur's lemma and the fact that $\frac{t}{|z|}\le1$)}
\\
&\lesssim C_c\Bigl[\sum_k
\|1\!{\rm l}_{Q_k^t}u\|_p^p\Bigr]^{\frac{1}{p}}
\\
&\hspace{3cm} \mbox{\small (by the $L^q$ bounds for $w_k$ and $v_k$ 
and since $t\le|z|$)}
\\
&\lesssim \|u\|_p
\end{align*}
where we have used, in the last estimate, the finite overlapping property of
the cubes $Q_k^t$. 
\end{proof}

\begin{theorem}
\label{thm:abstractThm2} 
Suppose that all the hypotheses of Theorem~\ref{thm:abstractThm} hold, 
but with \eqref{condA} replaced by \eqref{condB}:
\begin{enumerate}[(A)]\setcounter{enumi}{1}
\item
\label{condB}
Assume that $X_p$ is a closed subspace of $L^p(\Omega)$ such that there is
a Calder\'on-Zygmund type decomposition: for all $\alpha>0$
and all $u\in X_p$ there exist functions $g, w_k, v_k\in L^q(\Omega)$, $t_k>0$ and
cubes $Q_k=Q(x_k,t_k)\subset{\mathbb{R}}^n$ of center $x_k$ and sidelength
$t_k$ such that 
\begin{align*}
&\|g\|_p\lesssim \|u\|_p,\quad \|g\|_\infty\le \alpha, 
\\  
&1\!{\rm l}_\Omega\le \sum_k1\!{\rm l}_{Q_k}\le N\,1\!{\rm l}_\Omega,\ 
\|1\!{\rm l}_{Q_k\cap\Omega}u\|_p\lesssim\alpha |Q_k|^{\frac{1}{p}},\ 
\sum_k|Q_k|\lesssim\frac{1}{\alpha^p}\,\|u\|_p^p,
\\
&{\rm sppt}\,w_k,{\rm sppt}\,v_k\subset Q_k\cap\Omega,\ 
w_k\in {\rm{\sf D}}_{L^p}(B),\quad
\|w_k\|_q,\|v_k\|_q\lesssim 
{t_k}^{1-n(\frac{1}{p}-\frac{1}{q})}\|1\!{\rm l}_{Q_k\cap\Omega}u\|_p,
\\[4pt]
\mbox{and}\quad&u=g+\sum_k\bigl(Bw_k+\textstyle{\frac{1}{t_k}}v_k\bigr).
\end{align*}
\end{enumerate}
If $A$ admits a bounded $S_\mu^\circ$ holomorphic functional calculus
in $L^q(\Omega)$, then $f(A)$ is bounded from $X_p\cap L^q(\Omega)$ 
to the weak $L^p$ space $L^p_w(\Omega)$ defined as follows
$$
L^p_w(\Omega):=\Bigl\{u:\Omega\to \Lambda \mbox{ measurable }; 
\|u\|_{p,w}:=
\bigl(\sup_{\alpha>0}\alpha^p\bigl|\bigl\{x\in \Omega;|u(x)|>\alpha
\bigr\}\bigr|\bigr)^{\frac{1}{p}}<\infty\Bigr\}
$$ 
i.e. for each $\theta\in(\omega,\mu)$ there exists $K_{p,\theta}$ such that 
$$
\|f(A) u\|_{p,w}\leq K_{p,\theta}\|f\|_\infty\|u\|_p
\quad\forall\,u\in X_p\cap L^q(\Omega), \ \forall\,f\in\Psi(S_\mu^\circ).
$$
\end{theorem}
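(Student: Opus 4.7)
The plan is to establish the weak-type $(p,p)$ bound
\[
\alpha^p\bigl|\bigl\{x\in\Omega\,;\,|f(A)u(x)|>\alpha\bigr\}\bigr|\lesssim \|f\|_\infty^p\,\|u\|_p^p,\quad \alpha>0,\ u\in X_p\cap L^q(\Omega),
\]
following the Blunck--Kunstmann--Hofmann--Martell extrapolation scheme. Fix $\alpha>0$ and apply the Calder\'on--Zygmund-type decomposition furnished by \eqref{condB} at level $\alpha$ to write $u=g+\sum_k b_k$ with $b_k=Bw_k+t_k^{-1}v_k$ supported in the cubes $Q_k$ of sidelength $t_k$. Set $E^*:=\bigcup_k 2Q_k\cap\Omega$; by \eqref{condB}, $|E^*|\lesssim\alpha^{-p}\|u\|_p^p$, so it suffices to bound the measure of $\{|f(A)g|>\alpha/3\}$ and of $\{x\in\Omega\setminus E^*\,;\,|f(A)b(x)|>\alpha/3\}$ where $b:=\sum_kb_k$.

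For the good part, interpolate $\|g\|_\infty\le\alpha$ with $\|g\|_p\lesssim\|u\|_p$ to obtain $\|g\|_q\lesssim\alpha^{1-p/q}\|u\|_p^{p/q}$; the bounded $L^q$ holomorphic functional calculus of $A$ gives $\|f(A)g\|_q\le K\|f\|_\infty\|g\|_q$, and Chebyshev's inequality in $L^q$ furnishes the required bound. For the bad part outside $E^*$, use Chebyshev in $L^p$ and split each $1\!{\rm l}_{\Omega\setminus E^*}f(A)b_k$ into the annular shells $C_j(Q_k):=(2^{j+1}Q_k\setminus 2^jQ_k)\cap\Omega$, $j\ge 1$; on each shell, H\"older's inequality passes from $L^p$ to $L^q$ at the cost of a factor $(2^jt_k)^{n(1/p-1/q)}$.

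The core estimate is the $L^q$-bound for $1\!{\rm l}_{C_j(Q_k)}f(A)b_k$. Using the Cauchy--Dunford representation of $f(A)$ along a contour in $S_\mu^\circ\setminus\overline{S_\theta^\circ}$ parametrised by $z=-1/\zeta$ so that the resolvent takes the form $(I+zA)^{-1}$ appearing in the hypotheses, and substituting $b_k=Bw_k+t_k^{-1}v_k$, one obtains
\[
f(A)b_k=\tfrac{1}{2\pi i}\int_\Gamma\Bigl(f(-1/z)\,z^{-2}(I+zA)^{-1}(zB)w_k+t_k^{-1}f(-1/z)\,z^{-1}(I+zA)^{-1}v_k\Bigr)\,dz.
\]
The two $L^q$--$L^q$ off-diagonal bounds of Theorem~\ref{thm:abstractThm} give, for each $z$ on $\Gamma$, a factor $\exp(-c\,2^jt_k/|z|)$. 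Combined with $\|w_k\|_q,\|v_k\|_q\lesssim t_k^{1-n(1/p-1/q)}\|1\!{\rm l}_{Q_k\cap\Omega}u\|_p$ and the $\Psi$-class decay $|f(-1/z)|\lesssim\|f\|_\infty\min(|z|^s,|z|^{-s})$, integration in $z$ (splitting $|z|\lesssim t_k$ and $|z|\gtrsim t_k$) yields a gain $2^{-jN}$ with $N$ arbitrarily large. Raising to the $p$-th power, summing geometrically in $j$, and summing in $k$ via $\sum_k|Q_k|\lesssim\alpha^{-p}\|u\|_p^p$ closes the argument.

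The main obstacle will be coordinating the three scales $t_k$, $|z|$, and $2^jt_k$ along $\Gamma$ so that the $z$-integral converges while the final product admits summable decay in $j$ and $k$; this is precisely where \emph{both} off-diagonal hypotheses are needed simultaneously, the one for $(I+zA)^{-1}(zB)$ supplying the extra $|z|$ needed to absorb the $z^{-2}$ from the change of variables in the $Bw_k$-piece, and the one for $(I+zA)^{-1}$ handling the $v_k$-piece whose $t_k^{-1}$ prefactor is cancelled by $t_k^{1-n(1/p-1/q)}$ in $\|v_k\|_q$ against the H\"older loss $(2^jt_k)^{n(1/p-1/q)}$. The assumption $p\ge\max\{1,q_S\}$ enters exactly here, guaranteeing $n(1/p-1/q)\le 1$ so that this cancellation is affordable.
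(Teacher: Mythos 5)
Your overall architecture (split $u=g+\sum_k b_k$, handle the good part via interpolation $\|g\|_q\lesssim\alpha^{1-p/q}\|u\|_p^{p/q}$ plus Chebyshev, discard the dilated cubes, then estimate $f(A)b_k$ off the support using the two families of $L^q$--$L^q$ off-diagonal bounds, tracking the three scales $t_k$, $|z|$, $2^jt_k$) is the right Blunck--Kunstmann/Hofmann--Martell skeleton, and you correctly identified where $p\ge\max\{1,q_S\}$ enters. But there is a genuine gap in the bad-part estimate, and it concerns the appeal to ``$\Psi$-class decay $|f(-1/z)|\lesssim\|f\|_\infty\min(|z|^s,|z|^{-s})$''.

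That decay is \emph{not} quantified by $\|f\|_\infty$: the class $\Psi(S_\mu^\circ)$ only asserts that for each $f$ there exist some $s>0$ and some finite constant $C_f:=\sup_z|z|^s|f(z)|/(1+|z|^{2s})$, and $C_f$ has no relation to $\|f\|_\infty$. If you insert $|f(-1/z)|\le C_f\min(|z|^s,|z|^{-s})$ into the Cauchy--Dunford integral, the resulting bound on $f(A)b_k$ carries $C_f$, not $\|f\|_\infty$, and the conclusion of the theorem --- a bound $K_{p,\theta}\|f\|_\infty\|u\|_p$ with $K_{p,\theta}$ uniform over $\Psi(S_\mu^\circ)$ --- does not follow. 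This is not a cosmetic issue: the entire point of proving a bound with constant $\lesssim\|f\|_\infty$ on the dense class $\Psi$ is to then extend by convergence to all bounded holomorphic $f$, and that requires using only $|f(z)|\le\|f\|_\infty$ along the contour, never any decay of $f$ at $0$ or $\infty$. Moreover, with only $|f|\le\|f\|_\infty$ and the first-order off-diagonal decay $e^{-c\,2^jt_k/|z|}$, the $z$-integral for the resolvent piece (e.g.\ $\int_{t\ge 2t_k}e^{-c\,2^jt_k/t}\,dt/t$ in the paper's parametrisation) has a genuine logarithmic divergence at $|z|\to 0$; the exponential alone cannot save you.

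The missing idea, which the paper's proof supplies, is to regularise before applying the contour integral: with $R_k:=(I+it_kA)^{-1}$, insert the algebraic identity $I=(I-(I-R_k)^M)+(I-R_k)^M$ in front of each $b_k$. The part $f(A)\bigl(I-(I-R_k)^M\bigr)b_k$ is handled directly by the bounded $L^q$ functional calculus together with an $L^q$ bound for $\sum_kR_k^m b_k$ coming from the off-diagonal estimates and the maximal function (the paper's Step~3). For the remaining part, the Cauchy representation of $f(A)(I-R_k)^M$ carries the explicit multiplier $\bigl(it_kz/(1+it_kz)\bigr)^M$, which supplies decay $\sim(t_k|z|)^M$ as $|z|\to 0$ and $\sim 1$ as $|z|\to\infty$; this, in tandem with the off-diagonal exponential $e^{-c\,2^jt_k/|z|}$, makes both halves of the $z$-integral converge and gives the geometric gain in $j$ --- all with a constant depending only on $M$, $\theta$ and $\|f\|_\infty$. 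Choosing $M>n/q'$ then sums the annuli. Without this $(I-R_k)^M$ device (or an equivalent mollification), your Step involving $F$-type estimates cannot close with a constant $\lesssim\|f\|_\infty$.
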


\begin{proof}
The idea of the proof presented below is inspired by the techniques developed
in \cite{HMM11}. The starting point is a Calder\'on-Zygmund like decomposition
as \eqref{condB} in the statement.
 
It suffices to prove the result when $\|f\|_\infty=1$.  So assume henceforth that 
$\|f\|_\infty =1$.  

We proceed in several steps. Let $f\in \Psi(S_\mu^\circ)$.
Let $\alpha>0$, $u\in X_p$, and
write $u=g+\sum_k(Bw_k+\frac{1}{t_k}v_k)$ as in the statement of the theorem.

\medskip

\noindent
{\tt Step~1:}
{\em The part involving $g$.}

\noindent
We have that $g\in L^q(\Omega)$ with the estimate
$$
\|g\|_q\le \|g\|_\infty^{1-p/q}{\|g\|_p}^{p/q}
\lesssim\alpha^{1-p/q}{\|u\|_p}^{p/q}.
$$
Using the boundedness of $f(A)$ on $L^q(\Omega)$,
we have
$$
\alpha^{p}\bigl|\bigl\{x\in\Omega:
|f(A)g(x)|>\alpha\bigr\}\bigr|
\lesssim\,\alpha^{p}\frac{1}{\alpha^q}{\|f(A)g\|_q}^q
\lesssim\,\alpha^{p-q}{\|g\|_q}^q,
$$
which shows, using the bound just proven for $\|g\|_q$,
\begin{equation}
\label{eq:g}
\alpha^{p}\bigl|\bigl\{x\in\Omega:|f(A)g(x)|>\alpha\bigr\}\bigr|
\lesssim {\|u\|_p}^p
\end{equation}

\medskip

\noindent
{\tt Step~2:}
{\em On the subsets $2Q_k\cap\Omega=Q(x_k,2t_k)\cap\Omega$.}

\noindent
We denote by $E$ the set $\cup_k (2Q_k\cap\Omega)$.
We have the estimate
$|E|\le \sum_k|2Q_k|\lesssim\frac{1}{\alpha^p}\,\|u\|_p^p$,
so that 
\begin{equation}
\label{eq:E}
\alpha^p|E|\lesssim \|u\|_p^p.
\end{equation}

\medskip

\noindent
{\tt Step~3:}
{\em We claim that for all $m\ge1$, 
$$
\bigl\|\sum_kR_k^m
\bigl(Bw_k+\textstyle{\frac{1}{t_k}}v_k\bigr)\bigr\|_q
\lesssim\alpha\bigl|\displaystyle{\bigcup_kQ_k}\bigr|^{1/q},
$$
where $R_k:=({\rm I}\,+it_kA)^{-1}$ and for $M\ge 1$ to be chosen later,
\begin{equation}
\label{eq:good-bad}
\alpha^p\Bigl|\bigl\{x\in\Omega\setminus E: \bigl|f(A)\sum_k
\bigl({\rm I}\,-({\rm I}\,-R_k)^M\bigr)
\bigl(Bw_k+\textstyle{\tfrac{1}{t_k}}v_k\bigr)\bigr|>\alpha\bigr\}\Bigr|
\lesssim \|u\|_p^p.
\end{equation}
}

\noindent
Indeed, let $h\in L^{q'}(\Omega)$ with $\|h\|_{q'}=1$. 
We have that
\begin{align*}
&\Bigl|\int_\Omega
\bigl\langle\sum_kR_k^m\bigl(Bw_k+\tfrac{1}{t_k}v_k\bigr),
h\bigr\rangle\Bigr|
\\
\le&\Bigl|\int_\Omega\bigl\langle\sum_k\tfrac{1}{t_k}w_k,
t_kB^*R_k^*(R_k^*)^{m-1}h\bigl\rangle\Bigr| 
+\Bigl|\int_\Omega\bigl\langle\sum_k\tfrac{1}{t_k}v_k
(R_k^*)^mh\bigl\rangle\Bigr|, \quad\mbox{\small (taking the adjoints)}
\\[4pt]
\le&\sum_k\Bigl(\frac{1}{t_k}\|w_k\|_q
\|t_kB^*R_k^*(R_k^*)^{m-1}h\|_{L^{q'}(Q_k\cap\Omega)}+
\frac{1}{t_k}\|v_k\|_q
\|(R_k^*)^mh\|_{L^{q'}(Q_k\cap\Omega)}\Bigr).
\end{align*}
For each $k$, we denote by $A_{kj}$, $j\ge 1$, the annulus 
$2^jQ_k\setminus 2^{j-1}Q_k$ and by $A_{k0}=Q_k$, so that 
${\mathbb{R}}^n=\bigcup_{j\ge0}Q_{kj}$.
For each $k$, we decompose $h$ as 
$h=\sum_{j\ge0}1\!{\rm l}_{A_{kj}\cap\Omega}h$ and we obtain
\begin{align*}
&\Bigl|\int_\Omega \bigl\langle\sum_kR_k^m
\bigl(Bw_k+\frac{1}{t_k}v_k\bigr),h\bigr\rangle\Bigr|
\\
\lesssim&\sum_k\frac{1}{t_k}\bigl(\|w_k\|_q+\|v_k\|_q\bigr) \Bigl(\sum_je^{-c2^j}
\|h\|_{L^{q'}(\Omega\cap A_{kj})}\Bigr)
\\
&\qquad\mbox{\small (thanks to the off-diagonal bounds satisfied by 
by $R_k^*$, $t_kB^*R_k^*$ and compositions of them)}
\\
\lesssim&\sum_k\frac{1}{t_k}\|1\!{\rm l}_{Q_k}u\|_p 
t_k^{1-n(\frac{1}{p}-\frac{1}{q})}
\Bigl[\sum_je^{-c2^j}2^{jn/q'}t_k^{n/{q'}}
\Bigl(\fint_{2^jQ_k}|\tilde h|^{q'}\Bigr)^{1/{q'}}\Bigr]
\\
&\qquad\mbox{\small (using the bounds for $w_k$ and $v_k$ in $L^q$, denoting
by $\tilde h$ the extension by zero to ${\mathbb{R}}^n$ of $h$}
\\
&\qquad\mbox{\small and using the fact that $|2^jQ_k|=2^{jn}t_k^n$)} 
\\
\lesssim&\sum_k \alpha\, t_k^n
\inf_{x\in Q_k}\bigl({\mathcal{M}}(|\tilde h|^{q'})(x)\bigr)^{1/{q'}}
\Bigl(\sum_je^{-c2^j}2^{jn/p'}\Bigr)
\\
&\qquad\mbox{\small (since $\|1\!{\rm l}_{Q_k}\tilde{u}\|_p
\lesssim\alpha t_k^{n/p}$ and
using the maximal function ${\mathcal{M}}$ in ${\mathbb{R}}^n$}
\\
\lesssim&\alpha\sum_k\int_{Q_k}({\mathcal{M}}(|\tilde h|^{q'})\bigr)^{1/{q'}}
\qquad\mbox{\small (since 
$t_k^n\inf_{x\in Q_k}|f|(x)\lesssim\int_{Q_k}|f|$)}
\\
\lesssim& \alpha\int_{\bigcup_k Q_k}({\mathcal{M}}(|\tilde h|^{q'})\bigr)^{1/{q'}}
\qquad\mbox{\small (by the finite overlap property of the $Q_k$)}
\\
\lesssim&\alpha\bigl|\bigcup_kQ_k\bigr|^{1/q}\||\tilde h|^{q'}\|_1^{1/{q'}}
\qquad\quad\mbox{\small (thanks to the following estimate
(see, e.g., \cite[Lemma~5.16]{Du00}):}
\\
&\hspace{4.5cm}\mbox{\small
$\int_F\bigl({\mathcal{M}}|\varphi|\bigr)^{1/{q'}}
\lesssim |F|^{1/q}\|\varphi\|_1^{1/{q'}}$)}
\\
\lesssim& \alpha\bigl|\bigcup_kQ_k\bigr|^{1/q}.
\end{align*}
To prove \eqref{eq:good-bad} we now use the fact that $f(A)$ is bounded in 
$L^q$ and we obtain
\begin{align*}
&\alpha^p\Bigl|\bigl\{x\in\Omega\setminus E: \bigl|f(A)\sum_k
\bigl({\rm I}\,-({\rm I}\,-R_k)^M\bigr)
\bigl(Bw_k+\frac{1}{t_k}v_k\bigr)(x)\bigr|>\alpha\bigr\}\Bigr|
\\
\lesssim&\alpha^p\,\frac{1}{\alpha^q}\,\Bigl\|\sum_k
\bigl({\rm I}\,-({\rm I}\,-R_k)^M\bigr)
\bigl(Bw_k+\frac{1}{t_k}v_k\bigr)\Bigr\|_q^q
\\
\lesssim&\alpha^{p-q}\Bigl(\sum_{m=1}^M
\left(\begin{array}{c}\!\!\!M\\\!\!\!m\end{array}\!\!\!\right)
\Bigl\|\sum_kR_k^m\bigl(Bw_k+\frac{1}{t_k}v_k\bigr)\Bigr\|_q\Bigr)^q
\lesssim\alpha^p\bigl|\bigcup_kQ_k\bigr|\lesssim \|u\|_p^p.
\end{align*}

\medskip

\noindent
{\tt Step~4:} 
{\em Estimate of $\Bigl\|\sum_kf(A)({\rm I}\,-R_k)^M \bigl(Bw_k+\frac{1}{t_k}v_k\bigr)
\Bigr\|_{L^p_w(\Omega\setminus E)}$.}

\noindent
Let $\theta\in (\omega,\mu)$. Recall that for each $b\in L^p(\Omega)$, 
the definition of the functional calculus gives
$$
f(A)({\rm I}\,-R_k)^Mb=\frac{1}{2\pi i}\int_{\partial S_\theta^\circ}
f(z)\Bigl(1-\frac{1}{1+it_kz}\Bigr)^M(z{\rm I}\,-A)^{-1}b\,{\rm d}z.
$$
Using the change of variable 
$z=\frac{1}{t}e^{\pm i(\theta-\pi)}$ and $z=\frac{1}{t}e^{\pm i\theta}$
we obtain for $b_k:=t_kBw_k+v_k$
\begin{align}
&f(A)({\rm I}\,-R_k)^M \bigl(\tfrac{1}{t_k}b_k\bigr)
\nonumber\\
&=\frac{1}{2\pi i}\sum_{\varphi=\pm \theta, \pm(\pi-\theta)}\pm
\int_0^\infty \frac{1}{t_k}f(t^{-1}e^{i\varphi})
\Bigl(\frac{it_ke^{i\varphi}}{t+it_ke^{i\varphi}}\Bigr)^M
({\rm I}\,-te^{-i\varphi}A)^{-1}b_k\,\frac{{\rm d}t}{t}
\nonumber\\
&=\sum_{\varphi=\pm \theta, \pm(\pi-\theta)}
\pm \frac{1}{2\pi i}\int_0^{2t_k}\frac{1}{t_k}f(t^{-1}e^{i\varphi})
\Bigl(\frac{it_ke^{i\varphi}}{t+it_ke^{i\varphi}}\Bigr)^M
({\rm I}\,-te^{-i\varphi}A)^{-1}b_k\,\frac{{\rm d}t}{t}
\nonumber\\
&\ +\sum_{\varphi=\pm \theta, \pm(\pi-\theta)}
\pm \frac{1}{2\pi i}\int_{2t_k}^\infty \frac{1}{t_k}f(t^{-1}e^{i\varphi})
\Bigl(\frac{it_ke^{i\varphi}}{t+it_ke^{i\varphi}}\Bigr)^M
({\rm I}\,-te^{-i\varphi}A)^{-1}b_k\,\frac{{\rm d}t}{t}
\nonumber\\
&=\sum_{\varphi=\pm \theta, \pm(\pi-\vartheta)}
\bigl(F_{1,\varphi}^k(b_k)+F_{2,\varphi}^k(b_k)\bigr).
\label{eq:step4}
\end{align}

\medskip

\noindent
{\tt Step~4.1:}
{\em For $\varphi=\pm\theta$ or $\varphi=\pm(\pi-\theta)$, we claim that
\begin{equation}
\label{eq:step5.1}
\alpha^p\Bigl|\bigl\{x\in\Omega\setminus E : 
\bigl|\sum_kF_{1,\varphi}^k(b_k)(x)\bigr|>\alpha\bigr\}\Bigr|\lesssim \|u\|_p^p.
\end{equation}
}

\noindent
Let $h\in L^{p'}(\Omega)$ with $\|h\|_{p'}=1$. As before, for $j\ge 1$, we denote
by $A_{kj}$ the annulus $ 2^jQ_k\setminus 2^{j-1}Q_k$.
Using the representation of $F_{1,\varphi}^k(b_k)$, we have that 
\begin{align*}
&\Bigl|\int_{\Omega\setminus E}\bigl\langle\sum_kF_{1,\varphi}^k(b_k),
h\bigr\rangle\Bigr|
\\
\le&\frac{1}{2\pi}\Bigl|\sum_k\int_{\Omega}\int_0^{2t_k} 
f(t^{-1}e^{i\varphi})\Bigl(\frac{it_ke^{i\varphi}}{t+it_ke^{i\varphi}}\Bigr)^M
\Bigl\langle w_k,
tB^*({\rm I}\,-te^{-i\varphi}A^*)^{-1}(1\!{\rm l}_{\Omega\setminus E}h)
\Bigr\rangle\frac{{\rm d}t}{t^2}\Bigr|
\\
&+\frac{1}{2\pi}\Bigl|\sum_k\int_{\Omega}\int_0^{2t_k} 
\frac{t}{t_k}f(t^{-1}e^{i\varphi})\Bigl(\frac{it_ke^{i\varphi}}{t+it_ke^{i\varphi}}\Bigr)^M
\Bigl\langle v_k,
({\rm I}\,-te^{-i\varphi}A^*)^{-1}(1\!{\rm l}_{\Omega\setminus E}h)\Bigr\rangle
\frac{{\rm d}t}{t^2}\Bigr|
\\
&\hspace{4.5cm}\mbox{\small (using the definition of $b_k$ and duality)}
\\
&=\frac{1}{2\pi}\Bigl|\sum_k\int_0^{2t_k} \sum_{j\ge2}
\int_{\Omega}f(t^{-1}e^{i\varphi})\Bigl(\frac{it_ke^{i\varphi}}{t+it_ke^{i\varphi}}\Bigr)^M
\Bigl\langle w_k, tB^*({\rm I}\,-te^{-i\varphi}A^*)^{-1}
(1\!{\rm l}_{(\Omega\cap A_{kj})\setminus E)}h)\Bigr\rangle\frac{{\rm d}t}{t^2}\Bigr|
\\
&+\frac{1}{2\pi}\Bigl|\sum_k\int_0^{2t_k} \sum_{j\ge2}
\int_{\Omega}\frac{t}{t_k}f(t^{-1}e^{i\varphi})
\Bigl(\frac{it_ke^{i\varphi}}{t+it_ke^{i\varphi}}\Bigr)^M 
\Bigl\langle v_k, ({\rm I}\,-te^{-i\varphi}A^*)^{-1}
(1\!{\rm l}_{(\Omega\cap A_{kj})\setminus E)}h)
\Bigr\rangle\frac{{\rm d}t}{t^2}\Bigr|
\\
&\hspace{4.5cm}\mbox{\small (where we have decomposed 
$1\!{\rm l}_{\Omega\setminus E}h$ as
$\sum_{j\ge 2}1\!{\rm l}_{(\Omega\cap A_{kj})\setminus E}h$)}\ .
\end{align*}
We then obtain, denoting by $\tilde h$ the extension by $0$ to 
${\mathbb{R}}^n$ of $h$,
\begin{align}
&\Bigl|\int_{\Omega\setminus E}\bigl\langle\sum_kF_{1,\varphi}^k(b_k),
h\bigr\rangle\Bigr|
\nonumber\\
\lesssim&\|f\|_\infty\frac{1}{(\cos\theta)^M}
\sum_k\bigl(\|w_k\|_q+\|v_k\|_q\bigr) \int_0^{2t_k} \sum_{j\ge2}e^{-c2^{j-1}t_k/t}
\|1\!{\rm l}_{A_{kj}}\tilde h\|_{q'}\frac{{\rm d}t}{t^2}
\nonumber\\
&\qquad\mbox{\small (using the off-diagonal bounds for $({\rm I}\,+zA^*)^{-1}$
and for $zB^*({\rm I}\,+zA^*)^{-1}$,}
\nonumber\\
&\qquad\mbox{\small the estimate
$\Bigl|\frac{it_ke^{i\varphi}}{t+it_ke^{i\varphi}}\Bigr|\le \frac{1}{|\cos\varphi|}
=\frac{1}{\cos\vartheta}$, the fact that $\frac{t}{t_k}\le 2$ for $t\in [0,2t_k]$}
\nonumber\\
&\qquad\mbox{\small and the estimate 
$\frac{2^{j-1}t_k}{t}\ge \frac{2^j}{8}+\frac{2^jt_k}{4t}$ if $0<t<2t_k$)}
\nonumber\\
\lesssim& \sum_k\sum_{j\ge2}\alpha\,t_k^{n/p}
t_k^{1-n(\frac{1}{p}-\frac{1}{q})} e^{-c\frac{2^j}{8}}
\int_0^{2t_k}e^{-\frac{c}{4}\frac{2^jt_k}{t}}2^{\frac{nj}{q'}}|Q_k|^{1/{q'}}
\Bigl(\fint_{2^j Q_k}|\tilde h|^{q'}\Bigr)^{1/{q'}}\frac{{\rm d}t}{t^2}
\nonumber\\
&\qquad\mbox{\small (where we have used the bounds for $w_k$ and $v_k$ 
in $L^q$ and the fact that $\|1\!{\rm l}_{Q_k}u\|_p\lesssim\alpha\,t_k^{n/p}$)}
\nonumber\\
\lesssim&\alpha\sum_k|Q_k|
\inf_{x\in Q_k}\bigl({\mathcal{M}}(|\tilde h|^{p'})(x)\bigr)^{1/{p'}}
\Bigl(\sum_{j\ge2}2^{nj/{q'}}e^{-c2^j/8}\int_0^{2t_k}\frac{2^jt_k}{t}
e^{-\frac{c}{4}(2^jt_k/t)}\frac{{\rm d}t}{t}\Bigr)
\nonumber\\
&\qquad\mbox{\small (since $p'>q'$ and $|Q_k|^{1/n}\sim t_k$)}
\nonumber\\
\lesssim&\alpha\sum_k\int_{Q_k}
\bigl({\mathcal{M}}(|\tilde h|^{p'})\bigr)^{1/{p'}}
\Bigl(\sum_{j\ge2}2^{nj/{q'}}e^{-c2^j/8}\int_{2^{j-1}}^\infty e^{-cs/4}\,{\rm d}s\Bigr)
\lesssim \alpha\bigl|\bigcup_kQ_k\bigr|^{1/{p}}
\label{eq:step5.1-1}\\
&\qquad\mbox{\small (where we conclude as in Step~4, using the change of 
variable $s=\frac{2^jt_k}{t}$ in the integral}
\nonumber\\
&\qquad\mbox{\small with respect to $t$ and the fact that the sum over $j$
converges)}\ .
\nonumber
\end{align}
The estimate \eqref{eq:step5.1-1} shows that
$\displaystyle{\Bigl\|\sum_kF_{1,\varphi}^k(b_k)
\Bigr\|_{L^p(\Omega\setminus E,\Lambda)}
\lesssim \alpha\bigl|\bigcup_kQ_k\bigr|^{1/p}}$.
We can now prove \eqref{eq:step5.1}. We have that
$$
\alpha^p\Bigl|\bigl\{x\in\Omega\setminus E : 
\bigl|\sum_kF_{1,\varphi}^k(b_k)(x)\bigr|>\alpha\bigr\}\Bigr|
\lesssim\Bigl\|\sum_kF_{1,\varphi}^k(b_k)
\Bigr\|_{L^p(\Omega\setminus E)}^p
\lesssim\alpha^p\bigl|\bigcup_kQ_k\bigr|\lesssim\|u\|_p^p.
$$

\noindent
{\tt Step~4.2:}
{\em For $\varphi=\pm\theta$ or $\varphi=\pm(\pi-\theta)$ and 
$M>\frac{n}{q'}$, we claim that
\begin{equation}
\label{eq:step5.2}
\alpha^p\Bigl|\bigl\{x\in\Omega\setminus E : 
\bigl|\sum_kF_{2,\varphi}^k(b_k)(x)\bigr|>\alpha\bigr\}\Bigr|\lesssim \|u\|_p^p.
\end{equation}
}

\noindent
Let $h\in L^{p'}(\Omega)$ with $\|h\|_{p'}=1$.
We proceed as in the previous step and we obtain
\begin{align*}
&\Bigl|\int_{\Omega\setminus E}\bigl\langle\sum_kF_{2,\varphi}^k(b_k),h\bigr\rangle
\Bigr|
\\
\le&\frac{1}{2\pi}\Bigl|\sum_k\int_{2t_k}^\infty f(t^{-1}e^{i\varphi})\Bigl(
\frac{it_ke^{i\varphi}}{t+it_ke^{i\varphi}}\Bigr)^M
\sum_{j\ge2}\int_{\Omega}\Bigl\langle w_k, 
tB^*({\rm I}\,-te^{-i\varphi}A^*)^{-1}
(1\!{\rm l}_{(\Omega\cap A_{kj})\setminus E}h)\Bigr\rangle \frac{{\rm d}t}{t^2}\Bigr|
\\
&+\frac{1}{2\pi}\Bigl|\sum_k\int_{2t_k}^\infty \frac{t}{t_k} f(t^{-1}e^{i\varphi})
\Bigl(\frac{it_ke^{i\varphi}}{t+it_ke^{i\varphi}}\Bigr)^M 
\sum_{j\ge2}\int_{\Omega}\Bigl\langle v_k, ({\rm I}\,-te^{-i\varphi}A^*)^{-1}
(1\!{\rm l}_{(\Omega\cap A_{kj})\setminus E}h)\Bigr\rangle \frac{{\rm d}t}{t^2}\Bigr|
\\
&\qquad\mbox{\small (where we have used duality and the decomposition
$1\!{\rm l}_{\Omega\setminus E}h=
\sum_{j\ge 2}1\!{\rm l}_{(\Omega\cap A_{kj})\setminus E}h$)}
\\
\lesssim&\|f\|_\infty
\sum_k\bigl(\|w_k\|_p+\|v_k\|_p\bigr) \int_{2t_k}^\infty \Bigl(\frac{2t_k}{t}\Bigr)^{M-1} 
\sum_{j\ge2}e^{-c2^{j-1}t_k/t}\|1\!{\rm l}_{A_{kj}}\tilde h\|_{q'}\frac{{\rm d}t}{t^2}
\\
&\qquad\mbox{\small (thanks to the $L^q-L^q$ off-diagonal bounds satisfied by
$({\rm I}\,+zA^*)^{-1}$ and $zB^*({\rm I}\,+zA^*)^{-1}$}
\\
&\qquad\mbox{\small and the fact that 
$\bigl|\frac{it_ke^{i\varphi}}{t+it_ke^{i\varphi}}\bigr|\le \frac{2t_k}{t}$ if
$t\ge 2t_k$)}
\\
\lesssim& \alpha\sum_k|Q_k|
\inf_{x\in Q_k}\bigl({\mathcal{M}}(|\tilde h|^{p'})\bigr)^{1/{p'}}
\Bigl(\sum_{j\ge2}2^{nj/{q'}}t_k
\int_{2t_k}^\infty \Bigl(\frac{t_k}{t}\Bigr)^Me^{-\frac{c}{2}(2^jt_k/t)}
\frac{{\rm d}t}{t^2}\Bigr)
\\
&\mbox{\small where we have used the same arguments as for the proof 
of \eqref{eq:step5.1}.}
\end{align*}
To estimate the sum over $j\ge2$, we change the variable $s:=\frac{2^jt_k}{t}$
in the integral and we obtain
\begin{align*}
\sum_{j\ge2}2^{nj/{q'}}
\int_{2t_k}^\infty \Bigl(\frac{t_k}{t}\Bigr)^{M}e^{-\frac{c}{2}(2^jtk/t)}
\frac{{\rm d}t}{t}
&=\sum_{j\ge2}2^{nj/{q'}}\int_0^{2^{j-1}} 2^{-jM}s^{M}e^{-\frac{c}{2}s}\,
\frac{{\rm d}s}{s}
\\
&\le \Bigl(\int_0^\infty s^{M-1}e^{-\frac{c}{2}s}\,{\rm d}s\Bigr)
\Bigl(\sum_{j\ge2}2^{nj/{q'}} 2^{-jM}\Bigr)<\infty.
\end{align*}
The sum over $j$ is finite since we have chosen $M>n/{q'}$.
Therefore, we obtain as in the proof of \eqref{eq:step5.1-1}
$$
\Bigl|\int_{\Omega\setminus E}\bigl\langle\sum_kF_{2,\varphi}^k(b_k),h\bigr\rangle
\Bigr|
\lesssim \alpha\sum_k\int_{Q_k}
\bigl({\mathcal{M}}(|\tilde h|^{p'})\bigr)^{1/{p'}}
\lesssim \alpha\Bigl|\bigcup_kQ_k\Bigr|^{1/p}.
$$
This proves \eqref{eq:step5.2} the same way we proved \eqref{eq:step5.1}.

\medskip

\noindent
{\tt Step~5:}
{\em Conclusion: $f(A)$ maps $X_p\cap L^q$ to $L^p_w(\Omega)$.}

\noindent
Indeed, for all $\beta>0$ we have for $\alpha=\frac{\beta}{11}$
\begin{align*}
\Bigl\{x\in\Omega:\bigl|f(A)u(x)\bigr|&>\beta\Bigr\}
\subset
\Bigl\{x\in\Omega:\bigl|f(A)g(x)\bigr|>\alpha\Bigr\}
\cup E 
\\
&\cup
\Bigl\{x\in\Omega\setminus E:
\bigl|f(A)\sum_k\bigl({\rm I}\,-({\rm I}\,-R_k)^M\bigr)
\bigl(Bw_k+\frac{1}{t_k}v_k\bigr)(x)\bigr|>\alpha\Bigr\}
\\
&\cup \left(\bigcup_{\varphi=\pm\theta, \pm(\pi-\theta)}
\Bigl\{x\in\Omega\setminus E:
\bigl|\sum_k F_{1,\varphi}^k(b_k)(x)\bigr|>\alpha\Bigr\}\right)
\\
&\cup \left(\bigcup_{\varphi=\pm\theta, \pm(\pi-\theta)}
\Bigl\{x\in\Omega\setminus E:
\bigl|\sum_k F_{2,\varphi}^k(b_k)(x)\bigr|>\alpha\Bigr\}\right).
\end{align*}
We can estimate the size of each of the sets on the left hand side of the 
previous decomposition thanks to \eqref{eq:g}, \eqref{eq:E}, \eqref{eq:good-bad},
\eqref{eq:step5.1} and \eqref{eq:step5.2}. We prove that for all 
$u\in X_p$ and all $\beta>0$, we have that
$$
\beta^p\Bigl|\Bigl\{x\in\Omega:\bigl|f(A)u(x)\bigr|>\beta\Bigr\}\Bigr|
\lesssim \|u\|_p^p,
$$
which is exactly the claim.
\end{proof}

\appendix

\section{Deferred proofs}
\label{sec:appendix}

Recall the statement of Proposition~\ref{prop:lip-sim-smooth}:

\noindent
{\em
Let $\Omega\subset{\mathbb{R}}^n$ be a bounded strongly Lipschitz domain. 
Then there exists a bilipschitz map $\phi:{\mathbb{R}}^n\to{\mathbb{R}}^n$ 
where $\phi^{-1}(\Omega)=\Omega'$ is a smooth domain in ${\mathbb{R}}^n$
satisfying $\phi({\mathbb{R}}^n\setminus\overline{\Omega'})
={\mathbb{R}}^n\setminus\overline{\Omega}$ and $\phi(\partial\Omega')
=\phi(\partial\Omega)$.}

\begin{proof}
Let $\eta\in {\mathscr{C}}_c^\infty({\mathbb{R}}^{n-1})$ such that $\eta\ge 0$,
${\rm sppt}\,\eta\subset B_{n-1}(0,1)$ and $\int_{{\mathbb{R}}^{n-1}}\eta=1$. 
For $\varepsilon>0$, define 
$\eta_\varepsilon(x')=\varepsilon^{-(n-1)}\eta\bigl(\frac{x'}{\varepsilon}\bigr)$
for all $x'\in{\mathbb{R}}^{n-1}$.
By definition of a strongly Lipschitz domain, there is a covering of 
$\partial\Omega$ by $N$ open sets $V_j\subset{\mathbb{R}}^n$ ($j=1,\dots,N$)
with the following properties:
\begin{align*}
&\chi_j\in{\mathscr{C}}_c^\infty({\mathbb{R}}^n,[0,1]),\quad 
V_j=\bigl\{x\in{\mathbb{R}}^n; \chi_j(x)=1\bigr\}, \quad
{\rm sppt}\,\chi_j\subset U_j,
\\
&U_j=E_j\bigl(\prod_{k=1}^n[a_k,b_k]\bigr),\quad\mbox{where 
$a_k,b_k\in{\mathbb{R}}$ and $E_j$ is a Euclidian transformation,}
\\
&\rho_j(x)=E_j(x',x_n-g_j(x')),\ \forall\,x=(x',x_n)\in{\mathbb{R}}^n,\
g_j:{\mathbb{R}}^{n-1}\to{\mathbb{R}}\mbox{ Lipschitz continuous,}
\\
&\Omega\cap U_j=\rho_j({\mathbb{R}}^{n-1}\times(0,+\infty))\cap U_j.
\end{align*}
We fix now $j\in\{1,\dots,N\}$ and omit to write the subscript $j$. For the sake of
simplicity, we assume that $E_j$ is the identity on ${\mathbb{R}}^n$; 
if this is not the case, the modifications in the following proof are easy. We define
$$
\alpha:{\mathbb{R}}^n\to{\mathbb{R}}^n,\quad
\alpha(x)=\bigl(x',x_n-\chi(x)(g(x')-g_\varepsilon(x'))\bigr),\ x=(x',x_n),
$$
where $g_\varepsilon=\eta_\varepsilon*g-\varepsilon M$ for 
$\varepsilon<\frac{1}{M}$, $M:=\|\nabla g\|_\infty$. The map $\alpha$
is Lipschitz continuous by construction and we have, in particular,
for all $x'\in{\mathbb{R}}^{n-1}$,
\begin{align*}
|\eta_\varepsilon*g(x')-g(x')|=&\Bigl|\int_{{\mathbb{R}}^{n-1}}
\eta(y')\bigl(g(x'-\varepsilon y')-g(x')\bigr)\,{\rm d}y'\Bigr|
\\
\le&\int_{{\mathbb{R}}^{n-1}}\eta(y')\,M\,\varepsilon|y'|\,{\rm d}y' \le \varepsilon M,
\end{align*}
so that $\varepsilon M-\bigl(\eta_\varepsilon*g(x')-g(x')\bigr)\ge0$ for all
$x'\in{\mathbb{R}}^{n-1}$. Moreover we have the following properties:
\begin{enumerate}[(i)]
\item
It is straightforward to see that if $x=(x',x_n)\in V\cap\partial\Omega$, then
$\chi(x)=1$, $x_n=g(x')$ and therefore $\alpha(x)=(x',g_\varepsilon(x'))$, 
which defines a piece of a smooth hypersurface.
We have moreover that if $x\in{\mathbb{R}}^n\setminus U$, then 
$\chi(x)=0$ and then $\alpha(x)=x$.
\item
The map $\alpha:{\mathbb{R}}^n\to{\mathbb{R}}^n$ is invertible. 
Indeed, let $x'\in{\mathbb{R}}^{n-1}$. 
The function $h_{x'}:t\mapsto t-\chi (x',t)(g(x')-g_\varepsilon(x'))$
is smooth on ${\mathbb{R}}$ and its derivative is given by
$h_{x'}'(t)=1-\partial_n\chi(x',t)(g(x')-g_\varepsilon(x'))$. Choosing $\varepsilon>0$
small enough such that 
$$
\sup_{x'\in{\mathbb{R}}^{n-1},t\in{\mathbb{R}}}
\bigl|\partial_n\chi(x',t)\bigr|\le\frac{1}{4\varepsilon M},
$$
we have that $\frac{1}{2}\le h_{x'}'(t)\le \frac{3}{2}$, for all $t\in{\mathbb{R}}$, 
$x'\in{\mathbb{R}}^{n-1}$,
so that $h_{x'}:{\mathbb{R}}\to h_{x'}({\mathbb{R}})$ is strictly 
increasing, invertible and its
inverse is smooth (in the variable $t$). For $|t|$ large, $\chi(x',t)=0$. This 
implies that $h_{x'}(t)\xrightarrow[t\to-\infty]{}-\infty$
and $h_{x'}(t)\xrightarrow[t\to+\infty]{}+\infty$, and then
$h_{x'}({\mathbb{R}})={\mathbb{R}}$. Therefore, the map $\alpha$ is invertible, 
its inverse given by
$$
\alpha^{-1}:{\mathbb{R}}^n\to{\mathbb{R}}^n,\quad
\alpha^{-1}(y',y_n)=\bigl(y',h_{y'}^{-1}(y_n)\bigr).
$$
Moreover, since $h_{x'}$ is strictly increasing, we have that 
$\alpha({\mathbb{R}}^n\setminus\overline{\Omega})
={\mathbb{R}}^n\setminus\overline{\alpha(\Omega)}$ and 
$\alpha(\partial\Omega)=\partial\bigl(\alpha(\Omega)\bigr)$. 
\item
The map $\alpha^{-1}$ is Lipschitz continuous.
The Jacobian $n\times n$ matrix of $\alpha$ at a point $x=(x',t)$ is given by 
$$
J_\alpha(x',t)=\left(
\begin{array}{c|c}
{\rm I}_{n-1}&\nabla_{x'}\bigl(x'\mapsto h_{x'}(t)\bigr)
\\[4pt]
\hline
\\[-8pt]
0&1-\partial_n\chi(x',t)\bigl(g(x')-g_\varepsilon(x')\bigr)
\end{array}
\right)
$$
This matrix is invertible, its inverse at a point $(x',t)=\alpha^{-1}(y',y_n)$ is given by
$$
J_\alpha(x',t)^{-1}=\left(
\begin{array}{c|c}
{\rm I}_{n-1}&-\frac{\nabla_{x'}\bigl(x'\mapsto h_{x'}(t)\bigr)}{1
-\partial_n\chi(x',t)\bigl(g(x')-g_\varepsilon(x')\bigr)}
\\[4pt]
\hline
\\[-8pt]
0&\frac{1}{1-\partial_n\chi(x',t)\bigl(g(x')-g_\varepsilon(x')\bigr)}
\end{array}
\right)
=J_{\alpha^{-1}}(y',y_n)
$$
which is bounded on ${\mathbb{R}}^n$. Therefore $\alpha^{-1}$ is 
Lipschitz continuous.
\end{enumerate}
Following this construction for all $j=1,\dots,N$, we finally obtain
$$
\alpha:=\alpha_N\circ\dots\circ\alpha_1:
{\mathbb{R}}^n\to{\mathbb{R}}^n\quad\mbox{is a bilipschitz map}
$$
for which $\alpha(\Omega)=\Omega'$ is a smooth domain. Letting 
$\phi=\alpha^{-1}$ proves the claim made in Proposition~\ref{prop:lip-sim-smooth}.
\end{proof}

The following result shows a property of smooth domains. We didn't use it
in this paper, but it seems to us to be of independent interest and can justify,
a posteriori, together with Proposition~\ref{prop:lip-sim-smooth},
the classical assumption that for $\Omega$ a bounded
strongly Lipschitz domain, $x\in \partial\Omega$, $r>0$, the domain
$B(x,r)\cap \Omega$ has the same Lipschitz constant as $\Omega$
(see, e.g., \cite[\S5]{MM09a}).

\begin{lemma}
\label{lem:smooth-balls}
Let $\Omega'$ be a smooth domain in ${\mathbb{R}}^n$. For
$x_0\in\partial\Omega'$ and $r>0$, we consider $B(x_0,r)\cap\Omega'$.
Then there exists a smooth domain (of class ${\mathscr{C}}^3$)
$Q_r\subset{\mathbb{R}}^n$ such that
$$
B(x_0,r)\cap\Omega'\subset Q_r\subset B(x_0,2r)\cap\Omega'.
$$
\end{lemma}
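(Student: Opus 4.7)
The plan is to realize $Q_r$ as a sublevel set $\{G_t < 0\}$ of a one‑parameter family of $\mathscr{C}^\infty$ functions obtained by adding to a smooth defining function of $\Omega'$ a smooth radial barrier which blows up strictly inside $B(x_0,2r)$; the free parameter $t$ then allows one to rule out critical values via the parametrized Sard theorem.

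Concretely, I would fix a defining function $\rho \in \mathscr{C}^\infty({\mathbb{R}}^n)$ with $\Omega' = \{\rho < 0\}$ and $\nabla \rho \ne 0$ on $\partial\Omega'$, and set $R := \sup_{\overline{B(x_0,2r)}} |\rho|$. Next, choose a $\mathscr{C}^\infty$ function $\chi : (-\infty,2) \to [0,\infty)$ vanishing on $(-\infty,0]$, strictly increasing on $(0,2)$, and satisfying $\chi(s) \to \infty$ as $s \to 2^-$ (e.g.\ $\chi(s) := e^{-1/s}/(2-s)$ for $s \in (0,2)$, extended by zero). Writing $s(x) := (|x-x_0|^2 - r^2)/r^2$, define for each $t \in [0,1)$
\[
G_t(x) := \rho(x) + \chi\bigl(s(x) - t\bigr) \quad \text{on}\quad U_t := \{x : s(x) < 2+t\},
\]
and set $Q_r^{(t)} := \{x \in U_t : G_t(x) < 0\}$. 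For every $t \in [0,1)$ the two sandwich inclusions are immediate: if $x \in B(x_0,r) \cap \Omega'$ then $s(x) < 0 \le t$, so $\chi(s(x)-t) = 0$ and $G_t(x) = \rho(x) < 0$; conversely $G_t(x) < 0$ forces $\rho(x) < 0$ (hence $x \in \Omega'$) and $\chi(s(x)-t) \le R$, whence $s(x) \le t + \chi^{-1}(R) < 3$, i.e.\ $|x-x_0| < 2r$.

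The only real obstacle is to ensure that $\partial Q_r^{(t)}$ is a smooth hypersurface, i.e.\ that $0$ is a regular value of $G_t$; a naive choice of $\chi$ could fail at points where $\nabla \rho$ happens to be antiparallel to $x-x_0$ with exactly the matching magnitude, and such points may well occur inside $\Omega'$. The translation parameter $t$ dissolves this difficulty: I would apply the parametrized Sard theorem to the smooth map $F(x,t) := G_t(x)$ on the open set $\mathcal{U} := \{(x,t) \in {\mathbb{R}}^n \times [0,1) : s(x) < 2+t\}$. One first checks that $0$ is a regular value of $F$ itself. At $(x,t) \in F^{-1}(0)$ with $s(x) > t$ one has $\partial_t F(x,t) = -\chi'(s(x)-t) < 0$; if instead $s(x) \le t$ then $\chi(s(x)-t) = 0$, so $\rho(x) = 0$ and $\nabla_x F(x,t) = \nabla \rho(x) \ne 0$. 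Thus $\nabla_{(x,t)} F \ne 0$ on $F^{-1}(0)$, and Sard produces a full‑measure set of parameters $t \in [0,1)$ for which $0$ is a regular value of $G_t$. Picking any such $t$ in $(0,1)$ and setting $Q_r := Q_r^{(t)}$ yields a bounded $\mathscr{C}^\infty$ (a fortiori $\mathscr{C}^3$) domain with the required sandwich inclusion.
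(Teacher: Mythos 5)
Your proof is correct and takes a genuinely different route from the paper's. The paper builds an explicit $\mathscr{C}^3$ function
$G(x)=2r^2\,{\rm dist}(x,{\mathbb{R}}^n\setminus\Omega')^2-\max\{0,|x-x_0|^2-r^2\}^2$
and sets $Q_r=G^{-1}(0,\infty)$; the inclusions $B(x_0,r)\cap\Omega'\subset Q_r\subset B(x_0,2r)\cap\Omega'$ follow from a case analysis much like yours, but the regularity of $\partial Q_r$ is simply asserted from the regularity of $G$, with no discussion of whether $0$ is a regular value. You instead take a $\mathscr{C}^\infty$ defining function $\rho$ of $\Omega'$, add a smooth barrier $\chi(s(x)-t)$ that blows up as $|x-x_0|\to 2r^-$, and invoke the parametrized Sard theorem in the translation parameter $t$ to guarantee a regular level set. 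This buys you more: a $\mathscr{C}^\infty$ domain rather than a $\mathscr{C}^3$ one, and an honest justification that $\partial Q_r$ is an embedded hypersurface rather than just a level set of a function of the stated smoothness. (In fact the paper's $G$ has critical points on $\partial\Omega'\cap B(x_0,r)$, where $\nabla G=0$, so the paper's regularity claim for $\partial Q_r$ requires a supplementary local argument that $\{G>0\}$ coincides with $\Omega'$ near those points; your construction sidesteps this entirely.)

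Two small points to tighten in your write-up. First, the parametrized Sard theorem applies to smooth maps on open subsets of ${\mathbb{R}}^{n+1}$, so restrict the parameter to $t\in(0,1)$ rather than $[0,1)$; the sandwich inclusions hold for all such $t$, so nothing is lost. Second, to conclude that $Q_r^{(t)}$ is a bounded open set whose topological boundary is exactly $\{G_t=0\}\subset U_t$, you should note that $G_t(x)\to+\infty$ as $x\to\partial U_t$ (since $\chi(s)\to\infty$ as $s\to 2^-$ and $\rho$ is bounded on $\overline{B(x_0,2r)}$), so $\overline{Q_r^{(t)}}\Subset U_t$ and $\partial Q_r^{(t)}$ is compact. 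With those two remarks made explicit, the argument is complete.
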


\begin{proof}
We define $G:{\mathbb{R}}^n\to{\mathbb{R}}$ by
$$
G(x)=2r^2{\rm dist}\,(x,{\mathbb{R}}^n\setminus\Omega')^2
-\max\bigl\{0,(|x-x_0|^2-r^2)^2\bigr\},
\quad x\in{\mathbb{R}}^n.
$$
The function $G$ is of class ${\mathscr{C}}^3$. We define $Q_r:=G^{-1}(0,+\infty)$.
Then $Q_r$ is of class ${\mathscr{C}}^3$. It remains to verify that 
$B(x_0,r)\cap\Omega'\subset Q_r\subset B(x_0,2r)\cap\Omega'$.
\begin{enumerate}[(i)]
\item 
If $x\in B(x_0,r)\cap\Omega'$, then 
${\rm dist}\,(x,{\mathbb{R}}^n\setminus\Omega')^2>0$ and
$\max\bigl\{0,(|x-x_0|^2-r^2)^2\bigr\}=0$. Therefore, $G(x)>0$, and $x\in Q_r$.
\item
If $x\in{\mathbb{R}}^n\setminus\Omega'$, then 
${\rm dist}\,(x,{\mathbb{R}}^n\setminus\Omega')^2=0$ and therefore
$G(x)\ge 0$ which implies that $x\notin Q_r$.
\item
If $x\in \Omega'$ with $|x-x_0|\ge 2r$, then
$$
{\rm dist}\,(x,{\mathbb{R}}^n\setminus\Omega')^2\le |x-x_0|^2\quad
\mbox{and} \quad\max\bigl\{0,(|x-x_0|^2-r^2)^2\bigr\}=(|x-x_0|^2-r^2)^2.
$$
Therefore, 
$$
G(x)\le 2r^2|x-x_0|^2-(|x-x_0|^2-r^2)^2\le4r^2|x-x_0|^2-|x-x_0|^4\le0
$$
so that $x\notin Q_r$.
\end{enumerate}
This proves the properties of $Q_r$.
\end{proof}

{\small

}

\vspace{1cm}

\noindent
\begin{tabular}{lp{1.5cm}l}
Alan M$^{\rm c}$Intosh&&Sylvie Monniaux\\
Australian National University&&Aix-Marseille Universit\'e\\
Mathematical Science Institute&&CNRS, Centrale Marseille, I2M\\
Canberra, ACT 2601&&13453 Marseille\\
Australia&&France\\
{\tt email:\,alan.mcintosh@anu.edu.au}&&{\tt email:\,sylvie.monniaux@univ-amu.fr}
\end{tabular}

\end{document}